\newcommand{\black}{\color{black}}
\newcommand{\beq}{\begin{equation}}
\newcommand{\eeq}{\end{equation}}
\newcommand{\e}{\mathrm{e}}
\newcommand{\dd}{\,\mathrm{d}}
\newcommand{\taul}{\tau}
\newcommand{\uta}{u_{\tau}}
\newtheorem{theorem}{Theorem}[section]
\newtheorem{lemma}[theorem]{Lemma}
\newtheorem{cor}[theorem]{Corollary}
\newtheorem{proposition}[theorem]{Proposition}
\newtheorem{rem}[theorem]{Remark}
\author{Fr\'ed\'eric Rousset}
\address{Universit\'e Paris-Saclay, CNRS,   Laboratoire de Math\'ematiques d'Orsay (UMR 8628),  91405 Orsay Cedex, France (F. Rousset)}
\email{frederic.rousset@universite-paris-saclay.fr}
\author{Katharina Schratz}
\address{LJLL (UMR 7598), Sorbonne Universit\'e, UPMC, 4 place Jussieu, 75005, Paris, France (K. Schratz)}
\email{katharina.schratz@ljll.math.upmc.fr}
\begin{document}
\begin{abstract}
We consider various filtered time discretizations of the  periodic Korteweg--de Vries equation: a filtered exponential integrator, a filtered Lie splitting scheme as well as a filtered resonance based  discretisation and establish  error estimates at low regularity. 
  Our analysis is based on discrete Bourgain spaces and allows  to prove  convergence  in $L^2$ for rough data $u_{0} \in H^s,$ $s>0$ with an explicit
 convergence rate.
\end{abstract}

\title[]{Convergence error estimates at  low regularity \\ for time discretizations of KdV}

\maketitle

%%%%%%%%%%%%%%%%%%%%%%%%%%%%%%
\section{Introduction}
%%%%%%%%%%%%%%%%%%%%%%%%%%%%%%

We consider the Korteweg--de Vries (KdV) equation
\begin{equation}\label{KdV}
\begin{aligned}
\partial_t u(t,x) + \partial_x^3 u(t,x) = -\frac12 \partial_x u^2(t,x), \quad (t,x) \in \mathbb{R} \times \mathbb{T}
\end{aligned}
\end{equation}
with initial data $u(0,x) = u_0(x)$. In the last decades a large variety of numerical schemes was proposed to approximate the time dynamics of KdV solutions; see, e.g.,  \cite{Clem,HLR12,HKRT12,HKR99,HoS16,Klein06,OSu,T74,WuZ,WuZ2}. Their error analysis is so far restricted to smooth Sobolev spaces  and 
%. The control of the nonlinear terms in the global error recursion is based on Sobolev embedding theorem
%\begin{equation*}\label{bi}
%\Vert v w \Vert_{L^2} \leq C_\sigma \Vert v  \Vert_{L^2}\Vert w \Vert_{H^\sigma} \qquad \sigma > 1/2
%\end{equation*}
%or artificial diffusion 
  requires smooth solutions $u$ at least in $H^s$ with $s > 3/2$.
 For a long time it was therefore an open question whether  convergence, even  with arbitrarily small rate, can be achieved  for rough data 
 \begin{equation}\label{rough}
u_{0} \in H^s \qquad 0< s\le 3/2.
\end{equation} 
Our aim  is to address this question for a general class of schemes including the Lie splitting and exponential integrator methods.

More precisely, we consider  the filtered exponential integrator
\begin{equation}\label{expint}
u^{n+1}= e^{ - \tau \partial_x^3} \left[ u^n-  \frac\tau2 \varphi_1(\tau \partial_x^3) 
\Pi_{\taul} \partial_x \left(  \Pi_{\taul} u^n\right) ^2\right], \qquad \varphi_1(\delta) = \frac{e^\delta -1}{\delta},
\end{equation}
the filtered Lie splitting  (or Lawson method)
\begin{equation}\label{split}
u^{n+1}= e^{ - \tau \partial_x^3} \left[ u^n-  \frac\tau2  
\Pi_{\taul} \partial_x \left(  \Pi_{\taul} u^n\right) ^2\right],
\end{equation}
as well as the filtered version of the resonance based  scheme introduced in \cite{HoS16}, 
\begin{equation}\label{res}
\begin{aligned}
u^{n+1} & = \mathrm{e}^{-\tau \partial_x^3}  u^n - \frac{1}{6}  \Pi_{\taul}  \Big( \mathrm{e}^{- \tau \partial_x^3} \partial_x^{-1}     \Pi_{\taul}  u^n\Big)^2 + \frac{1}{6} \Pi_{\taul}  \mathrm{e}^{- \tau \partial_x^3}\Big( \partial_x^{-1}  \Pi_{\taul}  u^n\Big)^2,
%u^0 & =  \Pi_{\taul} u_0.
\end{aligned}
\end{equation}
where the projection operator $ \Pi_{\taul}$  is defined by the Fourier multiplier
\beq
\label{PiKdef}
 \Pi_{\taul}=  \chi(  -i \partial_x \, \tau^{1 \over 3} ),
\eeq
with $\chi= \mathrm{1}_{[-1, 1]}$.  

We initialize with $u^{0}= \Pi_{\tau}u_{0}$.   In the above schemes, $\tau$ denotes the time step size
and $u^n$ aims to approximate the solution $u$ of \eqref{KdV}  at  time $t_{n}= n \tau$.

The unfiltered Strang splitting scheme for KdV was analysed in \cite{HLR12,HKRT12}; under the assumption that the nonlinear part, i.e., Burgers equation $ \partial_t u =-\frac12 \partial_x u^2$,  is solved exactly,  second-order convergence rate for $H^{r+5}$ solutions could be  established in $H^r$ for any $r\geq 1$ (the same analysis would give first-order convergence for $H^{r+3}$ solutions for the  Lie splitting). With the aid of a Rusanov scheme, which roughly corresponds to the introduction of a  diffusion effect and allows to handle the derivative in Burger's nonlinearity,  error estimates for $H^3$ solutions could be furthermore obtained in  \cite{OSu}.  In \cite{Clem}, where a finite difference scheme is studied for   the equation  on the real line $\mathbb{R}$, a convergence result is obtained for data in $H^s$, $s\geq 3/4$. Thereby,  convergence of order $1/42$ holds under the CFL condition $\Delta t  \leq \Delta x^3$ in case of $s=3/4$. The latter  convergence analysis is, however, restricted to the real line  as it heavily relies on a smoothing effect on $\mathbb{R}$ which does not hold on the torus $\mathbb{T}$. The unfiltered resonance based discretisation, that is \eqref{res} with $\Pi_\taul = 1$, was originally introduced in \cite{HoS16}
to allow improved convergence rates for rougher data than  classical schemes allow. More precisely, first-order convergence in $H^1$ for solutions in $H^3$ can be established  for this scheme (\cite{HoS16}). Another   unfiltered resonance based  discretisation of  embedded type was recently  introduced in \cite{WuZ2} which allows first-order convergence in $H^{1/2+\epsilon}$ for solutions in $H^{3/2+\epsilon}$ for any $\epsilon>0$.
 The   convergence analysis in \cite{HoS16,WuZ2},   based on energy type  estimates  and standard product rules in Sobolev spaces,   would not allow us to handle rough data of type~\eqref{rough}
 on the torus (even at the price of a reduced convergence rate) since at least  Lipschitz solutions are needed for the argument.
 The situation is even worse for the unfiltered exponential integrator \eqref{expint} or the   Lie  splitting  \eqref{split}
 without Friedrichs or Rusanov corrections for Burgers (as used in 
 \cite{Clem,OSu}) since the energy method is unconclusive and the schemes seem unstable.

The aim of this paper is to handle in a unified way the three filtered schemes \eqref{expint}, \eqref{split}, \eqref{res}
and to provide convergence estimates  which  allow  us to deal with rough data \eqref{rough}.   In context of nonlinear Schr\"odinger equations  low regularity estimates  could be recently established  with the aid of  discrete Strichartz type estimates (on $\mathbb{R}^d$) and Bourgain type estimates (on $\mathbb{T}$); see \cite{IZ09,Ignat11,ORSBourg,ORS19,ORS20}. In context of the  KdV equation \eqref{KdV} our analysis will also rely on the discrete Bourgain spaces introduced in \cite{ORSBourg}. Nevertheless, 
as in the analysis of the continuous PDE, in order to overcome the loss of derivative in Burger's nonlinearity some new substantial
developments are needed. The presence of the filter $\Pi_{\tau}$ will be crucial to avoid a loss of derivative and to reproduce
at the discrete level the favorable frequency interactions of the KdV equation.

To deal with all three schemes \eqref{expint}-\eqref{res} at the same time we introduce 
\begin{align}\label{Psi}
\Psi_{\tau} (v)= 
- \frac12 
 \int_0^\tau 
\psi_1(s,\partial_x)
 \partial_x \left(   \psi_2(s,\partial_x)  v \right) ^2
ds,
\end{align}
where $\psi_1(s,\partial_x)$ and $ \psi_2(s,\partial_x)$ are Fourier multipliers with bounded symbols  $\psi_{1,2}(s,\partial_x) \in \{ 1, e^{\pm s \partial_x^3}\}$. This notation allows us to express the schemes \eqref{expint}, \eqref{split} and \eqref{res}  in the compact way 
\begin{equation}\label{scheme}
u^{n+1}= e^{ - \tau \partial_x^3}\left[ u^n + \Pi_{\taul} \Psi_{\tau} (\Pi_{\taul} u^n)\right],
\end{equation}
where the choice
\begin{equation*}\label{filtexpInt1}
\psi_1(s,\partial_x) =  e^{ s \partial_x^3}, \quad \psi_2(s,\partial_x) =1
\end{equation*}
corresponds to the exponential integrator \eqref{expint}, while setting
\begin{equation*}\label{filtexpInt2}
\psi_1(s,\partial_x) =  1, \quad \psi_2(s,\partial_x) =1
\end{equation*}
yields the Lie splitting  \eqref{split} and 
\begin{equation}
\label{filtexpInt3}
\psi_1(s,\partial_x) =  e^{ s \partial_x^3},\quad  \psi_2(s,\partial_x) =\mathrm{e}^{-s  \partial_x^3}
\end{equation}
leads to  the resonance based scheme \eqref{res}.

The filtered scheme \eqref{scheme} with the corresponding choice of filter function can be seen as a  classical exponential integrator/Lie splitting/resonance based discretisation applied to the projected KdV equation 
\begin{equation}
\begin{aligned}\label{KdVP}
\partial_t \uta + \partial_x^3  \uta=   -\frac12 \Pi_{\taul} \partial_x \left(  \Pi_{\taul} \uta\right) ^2, \qquad \uta(0) & =  \Pi_{\taul}  u_{0}.
\end{aligned}
\end{equation}

Our main convergence result is the following:

\begin{theorem}\label{maintheo}
For every $T >0$ and $u_{0} \in H^{s_{0}}$,  $s_{0}>0$,  $\int_{\mathbb{T}}u_{0}= 0$, let $u \in \mathcal{C}([0, T], H^{s_{0}})\cap X^{s_0}(T)
$  (we shall define this space in Section \ref{sec:errPro}) be the exact solution of \eqref{KdV} with initial datum $u_{0}$ and denote by $u^n$ the sequence defined by the scheme~\eqref{scheme}.
Then, we have  the following error estimate:
there exist $\tau_{0}>0$ and $C_{T}>0$ such that for every step size $\tau \in (0, \tau_{0}]$
\beq
\label{final1} \| u^n- u(t_{n})\|_{L^2} \leq C_{T} \max(\tau^{s_{0}\over 3}, \tau), \quad 0 \leq n\tau \leq T.
\eeq
\end{theorem}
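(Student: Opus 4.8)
The plan is to split the global error into a \emph{regularization error}, produced by the frequency truncation $\Pi_{\taul}$, and a genuine \emph{discretization error} for the projected flow, and to control the latter in a discrete Bourgain space adapted to the Airy group. Let $u_\tau$ denote the exact solution of the projected equation \eqref{KdVP}; then the scheme \eqref{scheme} is precisely the corresponding exponential integrator / Lie splitting / resonance based discretization of \eqref{KdVP}, and $u^n$ stays spectrally supported in $\{|k|\lesssim \tau^{-1/3}\}$. Writing $u(t_n)-u^n=(u(t_n)-u_\tau(t_n))+(u_\tau(t_n)-u^n)$, the first term is handled by the well-posedness theory of KdV in $X^{s,b}$ spaces: $u_\tau$ differs from $u$ only through the cutoff $\Pi_{\taul}$ on the datum and inside the nonlinearity, and since $\|(1-\Pi_{\taul})v\|_{L^2}\lesssim\tau^{s_0/3}\|v\|_{H^{s_0}}$, the Lipschitz dependence of the flow on the data together with the uniform bound $\|u_\tau\|_{X^0(T)}\lesssim\|u_0\|_{L^2}$ gives $\sup_{t\le T}\|u(t)-u_\tau(t)\|_{L^2}\le C_T\,\tau^{s_0/3}$. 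In addition $u_\tau$ obeys the quantitative smoothing bound $\|u_\tau(t)\|_{H^\sigma}\lesssim\tau^{-(\sigma-s_0)_+/3}$, which will be used to convert derivatives into negative powers of $\tau$.

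Next I would set up the discrete Bourgain space $X^{0,b}_\tau$ on the grid $t_n=n\tau$, built on the Airy phase as in \cite{ORSBourg} (in its KdV adaptation). Two ingredients are needed, both uniform in $\tau$: a transference / discrete linear estimate, so that the continuous solution sampled at $t_n$ is controlled in $X^{0,b}_\tau$; and the discrete bilinear estimate
\[
\bigl\|\,\Pi_{\taul}\,\Psi_\tau(\Pi_{\taul} v)\,\bigr\|_{X^{0,b}_\tau}\ \lesssim\ \tau\,\|v\|_{X^{0,b}_\tau}^2 ,
\]
which is the discrete counterpart of the Bourgain bilinear estimate $\|\partial_x(v^2)\|_{X^{0,b-1}}\lesssim\|v\|_{X^{0,b}}^2$ for KdV, the loss of the derivative $\partial_x$ being compensated by the filter. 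Inserting this into the discrete Duhamel formula associated with \eqref{scheme} and running a fixed-point / continuity argument yields $\|(u^n)\|_{X^{0,b}_\tau([0,T])}\lesssim\|u_0\|_{L^2}$ for all $\tau\le\tau_0$; this is also where the stability of the three filtered schemes is established (and where one sees why the unfiltered versions fail).

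Then I would set $e^n=u_\tau(t_n)-u^n$, which satisfies a discrete Duhamel formula whose source is the one-step consistency error $\mathcal L^n$ — the difference between $\Pi_{\taul}\Psi_\tau(\Pi_{\taul} u_\tau(t_n))$ and the exact Duhamel increment of \eqref{KdVP} over $[t_n,t_{n+1}]$ — plus a term bilinear in $(e^n,u_\tau)$ and $(e^n,u^n)$. Taylor expanding the exact Duhamel integrand and using \eqref{KdVP} to trade $\partial_t$ for $\partial_x^3$ and for the nonlinearity, one bounds $\mathcal L^n$ in $X^{0,b-1}_\tau$ by $\tau^2$ times bilinear Bourgain-norm expressions in $u_\tau$; estimating these by the bilinear estimate rather than by Sobolev products — which is essential to avoid a spurious loss of derivatives — and feeding in $\|u_\tau\|_{H^\sigma}\lesssim\tau^{-(\sigma-s_0)_+/3}$, the accumulated local error over $[0,T]$ is $\lesssim\max(\tau^{s_0/3},\tau)$; for the resonance based scheme the favorable resonance structure of \eqref{res} is what keeps this true at low regularity, the filter playing the analogous role for \eqref{expint} and \eqref{split}. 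Finally, using the bilinear estimate once more for the $e$-dependent term and subdividing $[0,T]$ into $O(1)$ pieces so that the prefactor $\tau(\|u_\tau\|_{X^{0,b}_\tau}+\|u^n\|_{X^{0,b}_\tau})$ is $<1/2$ (equivalently, a discrete Gronwall argument), one absorbs and obtains $\|e\|_{X^{0,b}_\tau([0,T])}\lesssim\max(\tau^{s_0/3},\tau)$, hence $\sup_{n\tau\le T}\|e^n\|_{L^2}\lesssim\max(\tau^{s_0/3},\tau)$. Combined with the regularization estimate this gives \eqref{final1}.

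The main obstacle is the \emph{uniform-in-$\tau$} discrete bilinear estimate above: one must reproduce, for exponential sums over the time grid, the near-resonant cancellations behind the low-regularity well-posedness of KdV, using the filter $\Pi_{\taul}$ — which enforces $\tau|k|^3\lesssim1$ on the support of every frequency involved — as the device that simultaneously absorbs the derivative $\partial_x$ and prevents the discrete Airy resonances from degenerating. A secondary difficulty is making the local-error estimate sharp, again via bilinear Bourgain-space estimates rather than crude product rules, so that turning $\|u_\tau\|_{H^\sigma}$ into $\tau^{-(\sigma-s_0)_+/3}$ does not spoil the final rate.
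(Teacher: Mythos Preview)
Your overall architecture matches the paper's: split into the regularization error $u-u_\tau$ (Proposition~\ref{corNLSK}) and the discretization error $e^n=u_\tau(t_n)-u^n$, control the latter in a discrete Bourgain space via a bilinear estimate, a local-error bound, and a short-time absorption argument. The identification of the filter $\Pi_\tau$ as what keeps the discrete Airy resonance structure intact is also on target.

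There is, however, a genuine gap in the function spaces you propose. You work in $X^{0,b}_\tau$ with the associated estimate $\|\partial_x(v^2)\|_{X^{0,b-1}}\lesssim\|v\|_{X^{0,b}}^2$ and use the embedding $X^{0,b}_\tau\hookrightarrow l^\infty_n L^2_x$ at the end; both force $b>1/2$. But on the torus the KdV bilinear estimate \emph{fails} in $X^{0,b-1}$ for every $b>1/2$ (this is the Kenig--Ponce--Vega counterexample), so your ``discrete counterpart'' cannot hold uniformly in~$\tau$ either. The paper is explicit that one is forced to the borderline $b=1/2$, and compensates for the loss of the $l^\infty_t L^2_x$ embedding by working in the augmented spaces $X^s_\tau$ and $Y^s_\tau$ of Section~\ref{sectionBourgain}, which carry an extra $l^2(k)L^1(\sigma)$ component (see~\eqref{Xsdef} and Lemma~\ref{lemb1/2}). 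The bilinear estimate you need is therefore Lemma~\ref{bourgaindurd}, not a $b>1/2$ version.

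A second, related gap is your absorption step. The prefactor in the $e$-dependent term is \emph{not} $\tau(\|u_\tau\|+\|u^n\|)$; after the Duhamel estimate the extra $\tau$ cancels against the discrete sum and you are left with an $O(1)$ coefficient that cannot be absorbed. The paper gains smallness differently: the bilinear estimate of Lemma~\ref{bourgaindurd} is asymmetric, placing one factor in $X^{0,1/2}_\tau$ and the other in $X^{0,1/3}_\tau$, and then the time-localization lemma~\eqref{bourg3d} converts the $X^{0,1/3}_\tau$ factor into $T_1^{\epsilon}\|\cdot\|_{X^{0,1/2}_\tau}$. This $T_1^\epsilon$ is what lets you close on a short subinterval and iterate. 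Without this mechanism your contraction/Gronwall step does not close.
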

Note that, we are able to establish a convergence result in $L^2$ with explicit convergence rate for any initial data
 in $H^{s_{0}}$, $s_{0}>0$. For $s_{0} \geq 3$, we recover a classical first order convergence result. Note that even in case of smooth solutions $ s_{0}>3$, the convergence analysis of the schemes~\eqref{expint} and ~\eqref{split}  will require the use of discrete Bourgain
spaces. This is due to the fact that the bilinear estimates in these spaces  are crucial to overcome the derivative in the right hand side in the stability analysis.
 For the resonance based scheme \eqref{res} (and its unfiltered counterpart, i.e., $\Pi_{\taul} =1$), on the other hand, a more standard convergence analysis by energy estimates can be carried out for smooth solutions $s_{0} >3$ without  employing discrete Bourgain spaces (see, e.g., \cite{HoS16}). %Convergence for smooth data    $u_0 \in H^{s_{0}}$, $s_{0}\geq 3$ also holds for the corresponding unfiltered  scheme with $\Pi_{\tau}=1$ with an improved error structure.
 
 From a similar analysis, we can also obtain  that if $u_{0} \in H^{s_{0}+r}$, $r \geq 0$, $s_{0}>0$,  then we have 
 the error estimate
 $$ \| u^n- u(t_{n})\|_{H^r} \leq C_{T} \max(\tau^{s_{0}\over 3}, \tau).$$
 In the case $u_{0} \in L^2$, we can  deduce from our analysis and  an approximation argument 
 like in \cite{Clem}
 that we have convergence
  without rate for small times, namely that  there exists $T>0$ such that
 $$ \lim_{\tau \rightarrow 0} \sup_{ 0 \leq n \leq T/\tau} \|u(t_{n})- u^n \|_{L^2} = 0.$$

% The convergence rate in case of rough solutions  is the same for all three schemes. Even tough in higher order Sobolev spaces the resonance based  scheme \cite{HoS16} introduces a better local error structure. This is due to the fact that for rough solutions the main loss of regularity lies in the filter function  needed in the stability estimates. The interesting feature, however, is that the analysis we develop is able to provide an error estimate even for data at this low level of regularity. 

\black

\bigskip

\paragraph*{\bf Outline of the paper.}
The idea is to first analyse the difference between the original KdV equation~\eqref{KdV} and its projected counterpart \eqref{KdVP} on the continuous level; see Section \ref{sec:errPro}. This will then allow us to analyse  the time discretisation error introduced by the  discretisation~\eqref{scheme} applied to the projected equation~\eqref{KdVP}; see Section \ref{sec:err} and \ref{sectionproofmain}.
In Section
 \ref{sectionBourgain}, we introduce the  appropriate discrete Bourgain spaces for the KdV equation and
 establish  their main properties. The proof of the crucial  bilinear estimate stated in Lemma \ref{bourgaindurd}
 is postponed to Section \ref{sectiontechnical}.

\paragraph*{\bf Notations.}
For two expressions $a$ and $b$, we write $a \lesssim b$ whenever $a \leq C b$ holds with some constant $C>0$, uniformly in $\tau \in (0, 1]$.  We further write $a\sim b$ if $b\lesssim a \lesssim b$. When we want to emphasize that $C$ depends on an additional parameter $\gamma$, we write $a \lesssim_{\gamma} b$.
%We shall also use  the notation $x_+$ for any $y>x$.
%In cases where a certain estimate holds for all parameters $p>q$ (with involved constants that might blow up for $p$ tending to $q$) we say that this estimate holds for $q_+$. In the same way, we use the notation $q_-$ to indicate the number $q-\varepsilon$ for any (fixed) $\varepsilon>0$.
Further, we denote $\langle \,\cdot\, \rangle = ( 1 + | \cdot |^2)^{1 \over 2}$.\black

\section{Error between the solutions of the  exact and projected equation}\label{sec:errPro}

In this section we establish an estimate on the difference between the solutions of the  original KdV equation  \eqref{KdV} and its projected counterpart \eqref{KdVP}. This will yield a bound on
 $$
\Vert u (t) - \uta(t)\Vert_{L^2}.
$$

We shall first recall the main tools that are used to prove local well-posedness
at low regularity for KdV on the torus \cite{Bour93b, KPV, CKSTT} since they are needed to estimate
$ \Vert u (t) - \uta(t)\Vert_{L^2}$.

Let us recall the definition of Bourgain spaces in the setting of the KdV equation. A tempered distribution $u(t,x)$ on $\mathbb{R} \times \mathbb{T}$ belongs to the Bourgain space $X^{s, b}$ if its following norm is finite
\begin{equation*}
\|u\|_{X^{s, b}}=\left(\int_{\mathbb{R}}\sum_{k \in \mathbb{Z}}\left(1+ |k|\right)^{2s}\left(1+| \sigma -  k^3|\right)^{2b}|\widetilde{u}\left(\sigma, k\right)|^2 \dd\sigma\right)^{\frac{1}{2}},
\end{equation*}
where $\widetilde{u}$ is the space-time Fourier transform of $u$:
$$
\widetilde{u}(\sigma, k)= \int_{\mathbb{R}\times \mathbb{T}} \e^{-i \sigma t - i k x} u(t,x) \, \dd t \dd x.
$$
We shall also use a localized version of this space.  For $I \subset \mathbb{R}$ being an open interval, we say that $u \in X^{s,b}(I)$ \black if $\|u\|_{X^{s,b}(I)}< \infty$, where
$$
\|u\|_{X^{s,b}(I)} = \inf\{\|\overline{u} \|_{X^{s,b}}, \, \overline{u}|_{I} = u  \}.
$$
When $I= (0, T)$ we will often simply use the notation $X^{s,b}(T)$. We refer for example to \cite{ORSBourg} Lem\-ma~2.1 for some useful properties of these spaces in this setting
(and to \cite{Bour93b} and \cite{Tao06} for more details).
A  particularly useful property  is the embedding $X^{s, b} \subset \mathcal{C}(\mathbb{R}, H^s)$ for $b>1/2$.
In the case of the KdV equation on the torus, in order to resolve the derivative in the nonlinearity, we are forced to work with the
borderline space which is at 
the level of $b=1/2$ (cf. \cite{KPV,CKSTT}). To get a space with good properties, we work with the smaller space
 $X^s$, which has the same scaling properties in time as $X^{s, {1 \over 2} }$,  defined by the following norm:
 \begin{equation}
 \label{Xsdef}
  \|u\|_{X^s} = \|u \|_{X^{s, {1 \over 2 }}} + \|\langle k \rangle^s \tilde u \|_{l^2(k)L^1(\sigma)}.
  \end{equation}
  We  define more precisely   $X^s$ as the  space of  space-time tempered distributions  such that $ \tilde{u}(\sigma, 0)= 0$ and
   the above norm is finite.
  In a similar way, we get a localized version $X^s(I)$ or $X^s(T)$ if $I=(0,T)$ by setting 
  $$ \|u\|_{X^{s}(I)} = \inf\{\|\overline{u} \|_{X^{s}}, \, \overline{u}|_{I} = u  \}.$$
  The main well-posedness result for \eqref{KdV} reads:
\begin{theorem}
\label{theoKdV}
For every $T>0$ and $u_{0}\in L^2$, $\int_{\mathbb{T}}u_{0}= 0$,  there exists a unique solution $u$ of \eqref{KdV} such that $u \in  X^0(T)$. Moreover, if $u_{0} \in H^{s_{0}}$, $s_{0}> 0$, then $ u\in X^{s_{0}}(T)$.
\end{theorem}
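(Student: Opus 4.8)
The plan is to run the standard Picard iteration for periodic KdV in Bourgain spaces, in the variant tailored to the borderline exponent $b=1/2$ (\cite{Bour93b,KPV,CKSTT}), working directly in the space $X^{s}$ of \eqref{Xsdef}. Fix $\eta\in\mathcal C^{\infty}_{c}(\mathbb R)$ with $\eta\equiv1$ on $[-1,1]$, and for $0<T_{*}\le1$ put $\eta_{T_{*}}(t)=\eta(t/T_{*})$. One seeks a fixed point of the truncated Duhamel map
\[
\Phi(u)(t)=\eta(t)\,\e^{-t\partial_{x}^{3}}u_{0}-\tfrac12\,\eta_{T_{*}}(t)\int_{0}^{t}\e^{-(t-\sigma)\partial_{x}^{3}}\,\partial_{x}\!\big(u(\sigma)^{2}\big)\,\dd\sigma .
\]
A fixed point of $\Phi$, restricted to $[0,T_{*}]$, solves \eqref{KdV}; conversely, since $\partial_{x}(u^{2})$ has vanishing spatial mean, the constraint $\widetilde u(\sigma,0)\equiv0$ built into $X^{s}$ is preserved along the flow, so $\int_{\mathbb T}u(t)\,\dd x=\int_{\mathbb T}u_{0}\,\dd x=0$ — which is exactly what makes the KdV resonance function non-degenerate.

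The scheme rests on three estimates. First, the homogeneous linear bound $\|\eta\,\e^{-t\partial_{x}^{3}}u_{0}\|_{X^{s}}\lesssim\|u_{0}\|_{H^{s}}$ and the inhomogeneous one
\[
\Big\|\eta_{T_{*}}(t)\int_{0}^{t}\e^{-(t-\sigma)\partial_{x}^{3}}F(\sigma)\,\dd\sigma\Big\|_{X^{s}}\lesssim T_{*}^{\theta}\,\|F\|_{Z^{s}},
\]
for some $\theta>0$, where $Z^{s}$ is the companion source space with norm $\|F\|_{X^{s,-1/2}}+\|\langle k\rangle^{s}\langle\sigma-k^{3}\rangle^{-1}\widetilde F\|_{\ell^{2}(k)L^{1}(\sigma)}$ (the gain $T_{*}^{\theta}$ coming from the slight gap between $b=1/2$ and the dual exponent that the time truncation opens up); these are by now routine, cf. \cite{ORSBourg}. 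The main obstacle is the \emph{bilinear estimate}
\[
\big\|\partial_{x}(uv)\big\|_{Z^{s}}\lesssim\|u\|_{X^{s}}\,\|v\|_{X^{s}},\qquad s\ge0,
\]
for $u,v$ of vanishing spatial mean, together with its Leibniz refinement $\|\partial_{x}(uv)\|_{Z^{s}}\lesssim\|u\|_{X^{s}}\|v\|_{X^{0}}+\|u\|_{X^{0}}\|v\|_{X^{s}}$.

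For the bilinear estimate I would go to the Fourier side: writing $k=k_{1}+k_{2}$, $\sigma=\sigma_{1}+\sigma_{2}$, one uses the algebraic identity
\[
\sigma-k^{3}=(\sigma_{1}-k_{1}^{3})+(\sigma_{2}-k_{2}^{3})+3\,k\,k_{1}\,k_{2},
\]
so the largest of the three modulations is $\gtrsim|k\,k_{1}k_{2}|$; because $k,k_{1},k_{2}$ are all non-zero and $|k|\le2\max(|k_{1}|,|k_{2}|)$, this recovers the derivative $|k|$ lost in the nonlinearity (indeed $|k|/|k_{1}k_{2}|^{1/2}\le2$). Splitting into cases according to which modulation dominates and applying Cauchy--Schwarz in the remaining frequency variables then closes the estimate; the delicate configurations — two comparable high input frequencies feeding a low output frequency, and the near-resonant regime — are precisely where the $\ell^{2}L^{1}$ pieces of $X^{s}$ and $Z^{s}$ are needed, the plain $X^{s,1/2}$ norm being only logarithmically short. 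This is the genuinely non-trivial input, and here I would quote the harmonic-analytic core from \cite{KPV,CKSTT} (see also \cite{Tao06}) rather than reproduce it.

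Combining the three estimates, for $T_{*}=T_{*}(\|u_{0}\|_{L^{2}})$ small enough $\Phi$ is a contraction on a ball of $X^{s}$ centred at the linear evolution, which gives a solution $u\in X^{s}(T_{*})$ for $s\ge0$; uniqueness in $X^{0}(T_{*})$ follows by applying the bilinear estimate to the difference of two solutions and a short-interval bootstrap. To globalize, one uses the KdV conservation law $\|u(t)\|_{L^{2}}=\|u_{0}\|_{L^{2}}$ (justified in $X^{0}$ by approximation with smooth data), so the local existence time $T_{*}$ never shrinks and the local result can be iterated $O(T/T_{*})$ times to reach an arbitrary $T$; this yields the existence and uniqueness statement in $X^{0}(T)$. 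For the persistence claim, the Leibniz form of the bilinear estimate shows that the local time at regularity $s_{0}$ still depends only on $\|u_{0}\|_{L^{2}}$ (the factor $T_{*}^{\theta}$ is frequency-uniform); hence on each of the finitely many steps the $X^{s_{0}}$ norm is controlled by the $H^{s_{0}}$ norm of the solution at the step's initial time, and iterating over the steps gives $\|u(T)\|_{H^{s_{0}}}\le C(T,\|u_{0}\|_{L^{2}})\,\|u_{0}\|_{H^{s_{0}}}<\infty$ and therefore $u\in X^{s_{0}}(T)$.
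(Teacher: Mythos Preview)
Your outline follows the paper's proof (and the standard Bourgain--Kenig--Ponce--Vega scheme) closely: truncated Picard iteration in $X^s$, linear estimates as in Lemma~\ref{lemmabourgainfacile}, the bilinear estimate driven by the resonance identity $\sigma-k^3=(\sigma_1-k_1^3)+(\sigma_2-k_2^3)+3kk_1k_2$, and globalization via $L^2$ conservation. The persistence-of-regularity step through the Leibniz form of the bilinear estimate is also what the paper does.

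One point needs correcting, as it would prevent the contraction from closing as written. The inhomogeneous linear estimate at the endpoint $b=1/2$, namely \eqref{bourg4}, carries \emph{no} factor $T_*^\theta$: both $\|\cdot\|_{X^s}$ and $\|\cdot\|_{Y^s}$ scale identically under $t\mapsto\lambda t$, so no positive power of $T_*$ can appear there. In the paper the smallness comes from a different mechanism. The bilinear estimate (Lemma~\ref{bourgaindur}) places one factor in $X^{0,1/3}$ rather than $X^{0,1/2}$, and the time cutoff is inserted \emph{inside} the nonlinearity: the fixed-point map is $\chi(t)\int_0^t e^{-(t-s)\partial_x^3}\partial_x\bigl(\chi(s/\delta)v(s)\bigr)^2\,ds$. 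One then applies \eqref{bourg3} with $b'=1/3$ and $b$ close to $1/2$ to the localized factor $\chi(\cdot/\delta)v$, which produces the needed $\delta^\epsilon$. With your placement of $\eta_{T_*}$ only outside the integral and a bilinear estimate stated with both factors in $X^s$, there is no source of smallness. Once you move the cutoff inside and state the bilinear estimate with the subcritical exponent on one input, the rest of your argument matches the paper.
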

Note that we  have $X^0(T) \subset \mathcal{C}([0, T], L^2)$ and
$ X^{s_{0}}(T)\subset \mathcal{C}([0, T], H^{s_{0}})$. The result also holds true for initial data of some negative regularity, 
nevertheless, since we have choosen to measure the convergence of our  numerical schemes  in the natural  $L^2$ norm, 
we shall not use these more general results.

 We refer to \cite{KPV, Bour93b, CKSTT} for the  detailed proof, nevertheless, we shall recall the main ingredients since
 we will later use related arguments at the discrete level.  

\begin{proof}
The existence in short time is first  proven by a fixed point argument on the following truncated problem:
 $$v\mapsto \Phi(v)$$
 such that
 \begin{equation}
 \label{fixed} \Phi(v)(t)=   \chi(t)e^{- t \partial_{x}^3} u_{0} -  {1 \over 2} \chi(t)\int_{0}^t e^{-(t- s)\partial_{x}^3 } 
\partial_{x}\left( \chi\left({s \over \delta}\right)u(s)\right)^2\, ds,
\end{equation}
 where $\chi \in [0, 1]$ is a  smooth compactly supported function which is equal to $1$ on $[-1, 1]$
  and supported in $[-2, 2]$. For $|t| \leq \delta \leq 1/2 $,  a fixed point of the above equation
  gives a solution of the original Cauchy problem, denoted by $u$. The parameter $\delta >0$ is chosen
  in the proof to get a contraction.
  
 The basic  properties of the spaces $X^{s,b}$ and $X^s$ that are needed are the following:
  \begin{lemma}
 \label{lemmabourgainfacile}
 For $\eta\in \mathcal{C}^\infty_{c}(\mathbb{R})$,   we have that
 \begin{eqnarray}
 \label{bourg1}
&  \| \eta (t) e^{- t \partial_{x}^3 } f \|_{X^{s}} \lesssim_{\eta} \|f\|_{H^s}, \quad s \in \mathbb{R}, \, f \in H^s(\mathbb{T}), \\
  \label{bourg2}
%&   \| \eta (t) u  \|_{X^{s}} \lesssim_{\eta} \|u\|_{X^{s}}, \quad s \in \mathbb{R}, \\
   \label{bourg3}
&  \| \eta({t \over T}) u \|_{X^{s,b'}} \lesssim_{\eta, b, b'} T^{b-b'} \|u\|_{X^{s,b}}, \quad s \in \mathbb{R},  -{1 \over 2} <b' \leq b <{ 1 \over 2}, \, 0< T \leq 1, \\
\label{bourg4}
& \left\| \chi(t) \int_{0}^t e^{- (t-s) \partial_{x}^3} F(s) \, ds \right\|_{X^{s}} \lesssim_{\eta, b} \|F\|_{Y^{s}}, \quad s \in \mathbb{R}
 \end{eqnarray}
 where $Y^s$ is the space defined by the norm
 $$  \|F\|_{Y^s} = \|F \|_{X^{s, -{1 \over 2 }}} + \|\langle k \rangle^s \langle \sigma - k^3 \rangle^{-1} \tilde F \|_{l^2(k)L^1_{\sigma}}.$$
  \end{lemma}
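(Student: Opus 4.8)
The plan is to reduce each of the three bounds to a one–dimensional statement in the time variable. The key observation is that the group $e^{t\partial_x^3}$ is a unitary isomorphism commuting with multiplication by functions of $t$ alone, and that it straightens the cubic curve: if we set $v:=e^{t\partial_x^3}u$, then $\widetilde v(\tau,k)=\widetilde u(\tau+k^3,k)$, so that
$$
\|u\|_{X^{s,b}}=\big\|\langle k\rangle^{s}\langle\tau\rangle^{b}\,\widetilde v(\tau,k)\big\|_{\ell^2_k L^2_\tau},\qquad \|u\|_{X^{s}}=\|u\|_{X^{s,1/2}}+\big\|\langle k\rangle^{s}\,\widetilde v(\tau,k)\big\|_{\ell^2_k L^1_\tau},
$$
and likewise for $Y^s$ with $\langle\tau\rangle^{-1/2}$ and $\langle\tau\rangle^{-1}L^1_\tau$ in place of $\langle\tau\rangle^{1/2}$ and $L^1_\tau$. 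Since $e^{t\partial_x^3}(\eta(\cdot/T)u)=\eta(\cdot/T)v$, and since $e^{-t\partial_x^3}$ intertwines the Duhamel operator with $G\mapsto\chi(t)\int_0^t G(s)\,ds$, all three estimates can be treated frequency by frequency in $x$. Estimate \eqref{bourg1} is then immediate: the space--time Fourier transform of $\eta(t)e^{-t\partial_x^3}f$ is $\widehat\eta(\sigma-k^3)\widehat f(k)$, so the $X^{s,1/2}$–part factorises as $\|\langle k\rangle^s\widehat f(k)\|_{\ell^2_k}\,\|\langle\mu\rangle^{1/2}\widehat\eta(\mu)\|_{L^2_\mu}\lesssim_\eta\|f\|_{H^s}$ because $\widehat\eta$ is Schwartz, and the $\ell^2_k L^1_\sigma$–part is $\le\|\widehat\eta\|_{L^1}\|\langle k\rangle^s\widehat f\|_{\ell^2_k}$.

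For \eqref{bourg3}, after the reduction it suffices to prove, for a scalar function $w=w(t)$ and $-1/2<b'\le b<1/2$, that $\|\eta(t/T)w\|_{H^{b'}_t}\lesssim_{\eta,b,b'}T^{b-b'}\|w\|_{H^b_t}$ uniformly in $T\in(0,1]$. For $0\le b'\le b$ I would interpolate the uniform bound $\|\eta(\cdot/T)w\|_{H^b}\lesssim\|w\|_{H^b}$ (valid for $0\le b<1/2$) with $\|\eta(\cdot/T)w\|_{L^2}\le\|\eta(\cdot/T)\|_{L^{1/b}}\|w\|_{L^{2/(1-2b)}}\lesssim T^{b}\|w\|_{H^b}$, using $\|\eta(\cdot/T)\|_{L^{1/b}}=T^{b}\|\eta\|_{L^{1/b}}$ and the Sobolev embedding $H^b\hookrightarrow L^{2/(1-2b)}$; the remaining ranges of $(b,b')$ follow by duality. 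This is the classical time-localisation estimate of \cite{Bour93b,Tao06}.

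The main obstacle is \eqref{bourg4}: the pair of exponents $(1/2,-1/2)$ is exactly the endpoint at which the usual Duhamel bound $X^{s,b-1}\to X^{s,b}$ degenerates, which is precisely why the auxiliary $\ell^2_k L^1_\sigma$ terms are built into $X^s$ and $Y^s$. By the reduction above it is enough to estimate, for scalar $G=G(t)$, the quantity $\|\chi(t)\int_0^t G(s)\,ds\|_{H^{1/2}_t}$ together with the $L^1_\tau$ norm of its time Fourier transform, in terms of $\|G\|_{H^{-1/2}_t}+\|\langle\tau\rangle^{-1}\widehat G\|_{L^1_\tau}$. Using the identity $\int_0^t e^{is\tau}\,ds=\tfrac{e^{it\tau}-1}{i\tau}$ one writes $\chi(t)\int_0^t G(s)\,ds=\tfrac1{2\pi}\int\tfrac{\widehat G(\tau)}{i\tau}\big(\chi(t)e^{it\tau}-\chi(t)\big)\,d\tau$, so that $\widehat{\chi\int_0^\cdot G}(\sigma)=\tfrac1{2\pi}\int\tfrac{\widehat G(\tau)}{i\tau}\big(\widehat\chi(\sigma-\tau)-\widehat\chi(\sigma)\big)\,d\tau$. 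One then splits $\widehat G$ into its resonant part supported in $|\tau|\le1$ and its non-resonant part supported in $|\tau|>1$. On the non-resonant part, $\langle\tau\rangle^{1/2}/|\tau|\lesssim\langle\tau\rangle^{-1/2}$ and $1/|\tau|\lesssim\langle\tau\rangle^{-1}$, so the $\widehat\chi(\sigma-\tau)$ contribution is bounded in $H^{1/2}_t$ via Young's inequality (since $\langle\cdot\rangle^{1/2}\widehat\chi\in L^1$) by $\|\langle\tau\rangle^{-1/2}\widehat G\|_{L^2}=\|G\|_{H^{-1/2}}$, and the $-\widehat\chi(\sigma)$ contribution, being a fixed Schwartz function times $\int\widehat G(\tau)/\tau\,d\tau$, by $\|\langle\tau\rangle^{-1}\widehat G\|_{L^1}$; the resonant part of $G$ is band-limited, hence smooth with all derivatives $\lesssim_N\|\widehat G\|_{L^1(|\tau|\le1)}\lesssim\|\langle\tau\rangle^{-1}\widehat G\|_{L^1}$, so $\chi\int_0^\cdot G$ lies in every $H^N$ with that bound. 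The $L^1_\tau$ output is controlled by the same quantities using only the $L^1$ factors. This is the argument of \cite[Lemma 2.1]{ORSBourg}, whose discrete counterpart is developed in Section~\ref{sectionBourgain}; I expect that the only point requiring genuine care is the bookkeeping of this resonant/non-resonant split and, in particular, keeping track of the $\ell^2_k L^1_\sigma$ outputs at each step.
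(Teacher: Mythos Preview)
The paper does not give its own proof of Lemma~\ref{lemmabourgainfacile}; it is quoted as a standard result from \cite{Bour93b,KPV,CKSTT,Tao06,ORSBourg}. What the paper \emph{does} prove in detail is the discrete analogue, Lemma~\ref{lemb1/2}, and your argument for \eqref{bourg4} is precisely the continuous version of that proof: conjugating by $e^{t\partial_x^3}$ to reduce to a scalar-in-time statement, writing the Fourier transform of the Duhamel term via $(e^{it\tau}-1)/(i\tau)$ so that $\widehat{\chi\!\int_0^\cdot G}(\sigma)=\tfrac{1}{2\pi}\int \tfrac{\widehat G(\tau)}{i\tau}(\widehat\chi(\sigma-\tau)-\widehat\chi(\sigma))\,d\tau$, and then splitting into $|\tau|\le 1$ (handled by smoothness/Taylor, controlling by $\|\langle\tau\rangle^{-1}\widehat G\|_{L^1}$) and $|\tau|>1$ (handled by $|\tau|^{-1}\lesssim\langle\tau\rangle^{-1}$, Young's inequality, and the rapid decay of $\widehat\chi$). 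Your treatments of \eqref{bourg1} and \eqref{bourg3} are the standard ones (the latter is the usual Sobolev-embedding/H\"older argument combined with the uniform $H^b$ multiplier bound for $|b|<1/2$, then interpolation and duality) and are correct.
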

  The other  ingredient is the following crucial  bilinear estimate.
  \begin{lemma}
  \label{bourgaindur}
  For $s \geq 0$, and $u \in X^s$,  we have the estimate:
 $$  \| \partial_{x} (u^2) \|_{Y^s} \lesssim  \|u\|_{X^{s, {1 \over 2}}} \|u \|_{X^{0, {1 \over 3}}} + \|u \|_{X^{0, {1 \over 2}}} \|u\|_{X^{s, {1 \over 3}}}.$$
  \end{lemma}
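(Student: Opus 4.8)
The plan is to follow the classical proof of the periodic KdV bilinear estimate of Kenig--Ponce--Vega and Colliander--Keel--Staffilani--Takaoka--Tao (\cite{KPV,Bour93b,CKSTT}), while keeping track of the two pieces of the $Y^s$ norm. Passing to the space--time Fourier side, write the output frequency as $k=k_1+k_2$ and $\sigma=\sigma_1+\sigma_2$, and record the algebraic identity $k^3-k_1^3-k_2^3=3kk_1k_2$, so that
\[
(\sigma-k^3)-(\sigma_1-k_1^3)-(\sigma_2-k_2^3)=-3kk_1k_2 ,
\]
whence $\max\bigl(\langle\sigma-k^3\rangle,\langle\sigma_1-k_1^3\rangle,\langle\sigma_2-k_2^3\rangle\bigr)\gtrsim|kk_1k_2|$. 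Since $\tilde u(\sigma,0)=0$, and the prefactor $ik$ kills the output frequency $k=0$, all three frequencies are nonzero integers, so $|k_1k_2|\geq1$; moreover, on the symmetric region $|k_1|\geq|k_2|$ one has $|k_1|\geq|k|/2$, hence $|kk_1k_2|\gtrsim\langle k\rangle^2$. This is the mechanism by which the single derivative produced by $\partial_x(u^2)$ is recovered: $|k|$ is always controlled by the $1/2$-power of the largest of the three modulation weights.

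For the $X^{s,-1/2}$ part of $\|\partial_x(u^2)\|_{Y^s}$ I would dualize, writing $\|\partial_x(u^2)\|_{X^{s,-1/2}}=\sup\{|\langle\partial_x(u^2),v\rangle|:\|v\|_{X^{-s,1/2}}\leq1\}$, and bounding $|\langle\partial_x(u^2),v\rangle|$ by a trilinear convolution sum of the shape $\sum_{k_1+k_2+k_3=0}\int_{\sigma_1+\sigma_2+\sigma_3=0}|k_3|\,|\tilde u(\sigma_1,k_1)|\,|\tilde u(\sigma_2,k_2)|\,|\tilde v(\sigma_3,k_3)|$. On $|k_1|\geq|k_2|$ we have $\langle k_3\rangle^s\lesssim\langle k_1\rangle^s$, so the $s$ derivatives may be placed on the first input, and we split according to which of $\langle\sigma_1-k_1^3\rangle,\langle\sigma_2-k_2^3\rangle,\langle\sigma_3-k_3^3\rangle$ is largest. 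If it is $\langle\sigma_3-k_3^3\rangle$, use $|k_3|\lesssim\langle\sigma_3-k_3^3\rangle^{1/2}$ to cancel the weight carried by $v$; the remaining form carries weights $\langle\sigma_1-k_1^3\rangle^{-1/3}$ and $\langle\sigma_2-k_2^3\rangle^{-1/3}$, and is estimated by Plancherel and H\"older $L^4_{t,x}\times L^4_{t,x}\times L^2_{t,x}$ together with Bourgain's periodic Airy estimate $\|w\|_{L^4_{t,x}}\lesssim\|w\|_{X^{0,1/3}}$, giving $\lesssim\|u\|_{X^{s,1/3}}\|u\|_{X^{0,1/3}}\lesssim\|u\|_{X^{s,1/2}}\|u\|_{X^{0,1/3}}$. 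If it is $\langle\sigma_1-k_1^3\rangle$ (the factor carrying $\langle k_1\rangle^s$), assign to it the full weight $\langle\sigma_1-k_1^3\rangle^{1/2}$ coming from $\|u\|_{X^{s,1/2}}$, which absorbs $|k_3|$ because $\langle\sigma_1-k_1^3\rangle\gtrsim\langle k_3\rangle^2$; the first input is then a bare $L^2_{t,x}$ factor while the other two go into $L^4_{t,x}$ via the same estimate, yielding $\lesssim\|u\|_{X^{s,1/2}}\|u\|_{X^{0,1/3}}$. If it is $\langle\sigma_2-k_2^3\rangle$, proceed symmetrically but match the second term of the right-hand side, assigning $\langle\sigma_2-k_2^3\rangle^{1/2}$ to the $X^{0,1/2}$ factor and keeping $\langle k_1\rangle^s$ on the $X^{s,1/3}$ factor.

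For the $\ell^2(k)L^1(\sigma)$ part of $\|\partial_x(u^2)\|_{Y^s}$ I would argue directly on $\langle k\rangle^s\langle\sigma-k^3\rangle^{-1}\widetilde{\partial_x(u^2)}$. When the largest modulation is one of the input ones it again dominates $|k|$, and, since $\langle\sigma-k^3\rangle^{-1}\in L^2(\mathbb{R}_\sigma)$, a Cauchy--Schwarz in $\sigma$ trades the $L^1_\sigma$ norm for an $L^2_\sigma$ norm and reduces matters to an $\|\partial_x(u^2)\|_{X^{s,0}}$-type bound handled as above. When the largest modulation is $\langle\sigma-k^3\rangle$ itself, then $\langle\sigma-k^3\rangle\gtrsim|kk_1k_2|$, so $|k|\langle\sigma-k^3\rangle^{-1}\lesssim|k_1k_2|^{-1}$ carries no derivative at all; integrating the convolution in $\sigma$ by Fubini leaves a sum of the shape $\sum_{k_1+k_2=k}|k_1k_2|^{-1}\langle k_1\rangle^s\,\|\tilde u(\cdot,k_1)\|_{L^1_\sigma}\|\tilde u(\cdot,k_2)\|_{L^1_\sigma}$, which closes by Young's inequality in $k$ once each slice is controlled, using that in this regime the $\sigma$-support is concentrated near $k_1^3+k_2^3$ so that the $L^1_\sigma$ norms may be traded for modulation-weighted $L^2_\sigma$ norms.

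The main obstacle is precisely this case analysis at the borderline: the periodic Airy $L^4$ estimate is sharp and the exponents $1/2$ and $1/3$ are critical, so every placement of $s$ derivatives and of modulation weights has to be done with essentially no slack, and the nearly-resonant high$\times$low$\to$high interactions, where $|k_1k_2|$ is only comparable to $|k|$ and two frequencies almost cancel, must be treated by hand. The mean-zero hypothesis (excluding zero frequencies) and the resonance identity are what make the lost derivative recoverable, and the $\ell^1$-in-modulation flavour of the spaces $X^s$ and $Y^s$ is what absorbs the logarithmic losses appearing in these borderline configurations; the detailed bookkeeping is classical and we refer to \cite{KPV,Bour93b,CKSTT}.
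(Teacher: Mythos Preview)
The paper does not actually prove Lemma~\ref{bourgaindur}; it is stated and attributed to \cite{KPV,Bour93b,CKSTT}, exactly as you do at the end of your sketch. Your overall strategy --- duality, the resonance identity $(\sigma-k^3)-(\sigma_1-k_1^3)-(\sigma_2-k_2^3)=-3kk_1k_2$, a case split according to the largest modulation, and Bourgain's periodic Airy $L^4$ estimate --- is the standard one, and it is precisely the route the paper follows for the \emph{discrete} analogue (Lemma~\ref{bourgaindurd}) in Section~\ref{sectiontechnical}. Your treatment of the $X^{s,-1/2}$ half is correct.

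There is, however, a concrete gap in your handling of the $\ell^2(k)L^1(\sigma)$ half in the case where the \emph{output} modulation $\langle\sigma-k^3\rangle$ is the largest. After the bound $|k|\langle\sigma-k^3\rangle^{-1}\lesssim|k_1k_2|^{-1}$ and Fubini you are left with $\|\tilde u(\cdot,k_j)\|_{L^1(\sigma)}$, and these are \emph{not} controlled by any of the norms $\|u\|_{X^{s,1/2}}$, $\|u\|_{X^{0,1/3}}$ on the right-hand side of the lemma --- only by the full $X^s$ norm. Your claim that ``the $\sigma$-support is concentrated near $k_1^3+k_2^3$ so that the $L^1_\sigma$ norms may be traded for modulation-weighted $L^2_\sigma$ norms'' is not correct: the regime hypothesis $\langle\sigma_j-k_j^3\rangle\le\langle\sigma-k^3\rangle$ gives no absolute bound on the input modulations, and any attempt to localize costs at least a logarithm or forces $b>1/2$ on an input.

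The fix (and this is how the paper argues in Section~\ref{sectiontechnical} for the discrete version) is to use instead the cruder consequence $|kk_1k_2|\gtrsim|k|^2$, valid because $|k_1k_2|\ge\max(|k_1|,|k_2|)\ge|k|/2$. On this region one therefore has $|\sigma-k^3|\gtrsim|k|^2$, and the output weight satisfies
\[
\int_{|\sigma-k^3|\gtrsim|k|^2}\frac{|k|^2}{\langle\sigma-k^3\rangle^{2}}\,d\sigma\lesssim 1
\]
uniformly in $k$. A Cauchy--Schwarz in $\sigma$ (or, equivalently, duality against $\|\tilde w\|_{\ell^2(k)L^\infty(\sigma)}\le1$ as in Section~\ref{sectiontechnical}) then reduces to an $L^2_{t,x}$ bound on the product, which is closed by $L^4\times L^4$ and the Strichartz estimate, landing on $\|u\|_{X^{s,1/3}}\|u\|_{X^{0,1/3}}$. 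The same duality trick also streamlines your input-modulation-largest sub-case, where the bare Cauchy--Schwarz you propose discards the output weight $\langle\sigma-k^3\rangle^{-1}$ too early.
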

  The difficult part is to prove the estimate for $s=0$ (it actually holds true for $ s>-1/2$), it is afterwards easy to get
  the estimate for $s>0$. 
  Note that by definition of our space $X^s$, we have that $ \int_{\mathbb{T}}u(t, \cdot) = 0$.
  By polarization, we easily deduce that we also have for example
  \begin{equation}
  \label{polarbourg}
  \| \partial_{x} (u v)\|_{Y^0} \lesssim \| u \|_{X^{0, {1 \over 2}}} \|v \|_{X^{0, {1 \over 3 }}}
 + \| v \|_{X^{0, {1 \over 2}}} \|u \|_{X^{0, {1 \over 3 }}}.
   \end{equation}
  
  By using \eqref{bourg1} and \eqref{bourg4}, we get that for $v \in X^0$
  $$\| \Phi(v) \|_{X^0}  \leq C \left( \|u_{0}\|_{L^2}  + \left\| \partial_{x} \left(\chi\left({t  \over \delta}\right) v\right)^2 \right\|_{Y^0}\right),$$
  where $C>0$ is independent of $u_{0}$ and $\delta$. Then, we can use Lemma \ref{bourgaindur}, to get
  $$ \| \Phi(v) \|_{X^0} \leq C \left( \|u_{0}\|_{L^2}  + 
  \left\| \chi\left({t  \over \delta}\right) v\right\|_{X^{0, {1 \over 2}}}       \left\| \chi\left({t  \over \delta}\right) v \right\|_{X^{0, {1 \over 3}}}        \right)$$
and we finally deduce from \eqref{bourg2} that
$$    \| \Phi(v) \|_{X^0} \leq C \left( \|u_{0}\|_{L^2}  +  \delta^\epsilon \|v\|_{X^0}^2 \right),$$
where $\epsilon$ is any number in $ (0, 1/6).$ 
   By using the same ingredients, we also get that for every $v$, $w \in X^0$, we have that
   $$    \| \Phi(v)  - \Phi(w) \|_{X^0} \leq C  \delta^\epsilon  ( \|v\|_{X^0} + \| w \|_{X^0})\|v - w\|_{X^0}$$
   for some $C>0$ independent of $\delta$ and $u_{0}.$
   
   Consequently, by taking $R= 2 C \|u_{0}\|_{L^2}$, we get that there exists $\delta>0$ sufficiently small 
    that depends only on $\|u_{0}\|_{L^2}$, such that $\Phi$ is a contraction on the closed ball $B(0, R)$
     of $X^{0}.$ This proves the existence of a fixed point for $\Phi$ and hence the existence of  a solution $u$  of \eqref{KdV}
      on $[0, \delta]$.  
     By using again Lemma \ref{lemmabourgainfacile} and Lemma \ref{bourgaindur}, we also have for  $s \geq 0$, that  
    $$  \|\Phi(v) \|_{X^{s} } \leq C \|u_{0}\|_{H^s} + C \delta^\epsilon  \|v\|_{X^{0}} \|v\|_{X^{s}},$$
       such that if $u_{0}$ is in $H^s$ then we also have that  $u \in X^{s,b}( [0, \delta])$ .
      Since the $L^2$ norm is conserved for~\eqref{KdV}, we can  reiterate the construction on $[\delta, 2 \delta]$ and so on 
      to get a global solution. We thus obtain a solution  $u$ with  
         $u \in X^{s, b}(T)$  for every $T$.
        
\end{proof}
 
Let us now consider the projected equation \eqref{KdVP}. A straghtforward adaptation of the previous proof yields
 the following global well-posedness result.
 
 \begin{proposition}
\label{propKdVP}
For  $u_{0} \in H^{s_{0}}$, $s_{0} \geq 0$  and $\tau \in (0, 1]$, there exists a unique solution $u_{\tau}$ of \eqref{KdVP} such that $u_{\tau} \in X^{s_{0}}(T)$  for every $T >0$. Moreover, for every $T>0$, there exists $M_{T}> 0$ such that for every $\tau \in (0, 1]$, we have
 the estimate
 $$ \| u_{\tau} \|_{X^{s_{0}}(T)} \leq M_{T}.$$
\end{proposition}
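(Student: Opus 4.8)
The plan is to repeat, essentially verbatim, the contraction--mapping argument from the proof of Theorem \ref{theoKdV}, the only new feature being the Fourier multiplier $\Pi_{\taul}$, and then to check that every constant produced along the way can be chosen independent of $\tau \in (0,1]$. The essential observation is that $\Pi_{\taul}=\chi(-i\partial_x\tau^{1/3})$ is a multiplier in $x$ only whose symbol is bounded by $1$; hence it has operator norm $\leq 1$ on each of $H^s$, $X^{s,b}$, $X^s$ and $Y^s$, uniformly in $\tau$, it commutes with $e^{-t\partial_x^3}$, and (since $\int_{\mathbb T}u_0=0$ and the nonlinearity of \eqref{KdVP} is a perfect $x$-derivative) the flow preserves zero spatial mean, so the spaces $X^s$ are the correct functional setting.

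First I would fix $\tau$ and set up the truncated map $v\mapsto\Phi_{\taul}(v)$,
\[
\Phi_{\taul}(v)(t)=\chi(t)\,e^{-t\partial_x^3}\Pi_{\taul}u_0\;-\;\frac12\,\chi(t)\int_0^t e^{-(t-s)\partial_x^3}\,\Pi_{\taul}\,\partial_x\bigl(\chi(s/\delta)\,\Pi_{\taul}\,v(s)\bigr)^2\,ds .
\]
Applying \eqref{bourg1}, \eqref{bourg4}, the polarised bilinear estimate \eqref{polarbourg}, the time-localisation estimate \eqref{bourg3}, together with the boundedness of $\Pi_{\taul}$ (which is simply absorbed before each estimate is applied), one obtains exactly the bounds from the proof of Theorem \ref{theoKdV}:
\[
\|\Phi_{\taul}(v)\|_{X^0}\leq C\bigl(\|u_0\|_{L^2}+\delta^{\epsilon}\|v\|_{X^0}^2\bigr),\qquad \|\Phi_{\taul}(v)-\Phi_{\taul}(w)\|_{X^0}\leq C\,\delta^{\epsilon}\bigl(\|v\|_{X^0}+\|w\|_{X^0}\bigr)\|v-w\|_{X^0},
\]
with $C>0$ and $\epsilon\in(0,1/6)$ independent of $\tau$ and $u_0$. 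Taking $R=2C\|u_0\|_{L^2}$ and $\delta>0$ small depending only on $\|u_0\|_{L^2}$, the map $\Phi_{\taul}$ is a contraction on $B(0,R)\subset X^0$; this yields a solution $u_{\tau}$ of \eqref{KdVP} on $[0,\delta]$ with $\|u_{\tau}\|_{X^0([0,\delta])}\lesssim\|u_0\|_{L^2}$, unique in $X^0(\delta)$ by the Lipschitz estimate. When $u_0\in H^{s_0}$ with $s_0>0$, the same ingredients give $\|\Phi_{\taul}(v)\|_{X^{s_0}}\leq C\|u_0\|_{H^{s_0}}+C\delta^{\epsilon}\|v\|_{X^0}\|v\|_{X^{s_0}}$ on $[0,\delta]$, hence $u_{\tau}\in X^{s_0}([0,\delta])$.

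To globalise with $\tau$-uniform bounds I would invoke conservation of the $L^2$-norm along \eqref{KdVP}: since $\Pi_{\taul}$ is self-adjoint and $\partial_x^3$ skew-adjoint,
\[
\frac{d}{dt}\|u_{\tau}\|_{L^2}^2=-\int_{\mathbb{T}}(\Pi_{\taul}u_{\tau})\,\partial_x\bigl((\Pi_{\taul}u_{\tau})^2\bigr)\,dx=0,
\]
the last integral vanishing after one integration by parts. Thus $\|u_{\tau}(t)\|_{L^2}=\|u_0\|_{L^2}$, so the length $\delta$ of the local interval is the same at each step, and the construction can be iterated on $[\delta,2\delta],[2\delta,3\delta],\dots$; covering $[0,T]$ takes $\lceil T/\delta\rceil$ steps, a number depending only on $T$ and $\|u_0\|_{L^2}$. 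Summing the $X^0$-norms over these subintervals gives $\|u_{\tau}\|_{X^0(T)}\leq C(T,\|u_0\|_{L^2})$ uniformly in $\tau$, and uniqueness in $X^{s_0}(T)$ follows by patching the local uniqueness statements. For $s_0>0$, on the $j$-th subinterval the $H^{s_0}$-inequality above (with $\|v\|_{X^0}$ already controlled uniformly) gives $\|u_{\tau}\|_{X^{s_0}(I_j)}\leq C\|u_{\tau}(t_{j})\|_{H^{s_0}}+(\text{lower order})$; using the embedding $X^{s_0}(I_j)\subset\mathcal C(\overline{I_j},H^{s_0})$ to feed the endpoint value into the next step and iterating this linear recurrence over the $\lceil T/\delta\rceil$ subintervals (a discrete Gronwall argument) yields $\|u_{\tau}\|_{X^{s_0}(T)}\leq M_T$ with $M_T$ independent of $\tau$.

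There is no genuine difficulty beyond this $\tau$-uniformity bookkeeping, and the latter reduces to the single fact that $\Pi_{\taul}$ is a contraction on every space appearing in Lemma \ref{lemmabourgainfacile} and in Lemma \ref{bourgaindur}: once that is noted, the constant $C$ and the local time $\delta$ coming out of the fixed-point scheme are literally those of Theorem \ref{theoKdV}, and the $L^2$-conservation, which makes $\delta$ depend only on $\|u_0\|_{L^2}$, supplies the global uniform bound. The only point to phrase with a little care is that the implicit constant in the bilinear estimate of Lemma \ref{bourgaindur} does not see $\tau$ --- which is immediate since the two factors of $\Pi_{\taul}$ are absorbed (with loss $\leq 1$) before the estimate is used.
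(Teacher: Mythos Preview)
Your proposal is correct and follows precisely the approach the paper itself indicates: the authors write only that ``a straightforward adaptation of the previous proof yields'' Proposition~\ref{propKdVP}, and what you have written is exactly that adaptation, with the key observations being the uniform boundedness of $\Pi_{\taul}$ on all the relevant spaces and the $L^2$-conservation for \eqref{KdVP} (mirroring the use of $L^2$-conservation in the proof of Theorem~\ref{theoKdV}). One cosmetic remark: ``summing the $X^0$-norms over subintervals'' is not literally how one bounds $\|u_{\tau}\|_{X^0(T)}$ from the localised norms, but gluing extensions via a partition of unity gives the stated control and your conclusion stands.
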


\begin{rem}
\label{remlocal}
Note that, since  $\Pi_{\tau}^2= \Pi_{\tau}$, we have that $\Pi_{\tau} u_{\tau}$ solves the same equation \eqref{KdVP} with the same initial data as $u_{\tau}$ and hence we have by uniqueness that
$$
\Pi_{\tau} u_{\tau} (t) = u_{\tau}(t) \quad \text{for all \ $t \geq 0$.}
$$
\end{rem}

We shall also need an estimate with  more $b$ regularity:
\begin{cor}
\label{corbbig}
 For every $T\geq 1$ and $u_{0} \in H^{s_{0}},  $ $s_{0} \geq 0$, $\int_{\mathbb{T}}u_{0}= 0$,  there exists $M_{T}>0$ such that for every  $\tau \in (0, 1]$, we have the estimate
  $$ \| u_{\tau}\|_{X^{s_{0}, 1}(T)} \leq { M_{T} \over \tau^{1 \over 3}}.$$
\end{cor}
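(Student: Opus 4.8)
The plan is to bootstrap from Proposition \ref{propKdVP}, which already gives $\|u_\tau\|_{X^{s_0}(T)} \le M_T$ uniformly in $\tau$, and upgrade the time regularity from $b=1/2$ (contained in the $X^{s_0}$ norm) to $b=1$, at the cost of the announced factor $\tau^{-1/3}$. The source of that factor is the frequency truncation $\Pi_\tau$: on the support of $\Pi_\tau$ one has $|k| \le \tau^{-1/3}$, hence $|k|^3 \le \tau^{-1}$, so the dispersive phase $\sigma - k^3$ cannot be too large when $\sigma$ is near $k^3$ — more precisely, multiplying by $\langle\sigma-k^3\rangle^{1/2}$ costs at most $\tau^{-1/3}\langle k\rangle^{-1}\langle\sigma-k^3\rangle^{1/2}$ relative to what is already controlled. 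The cleanest route: write $u_\tau$ via the Duhamel formula localized in time exactly as in \eqref{fixed}, i.e. on a short interval $[0,\delta]$ (with $\delta$ as in the proof of Theorem \ref{theoKdV}, depending only on $\|u_0\|_{L^2}$, hence uniform in $\tau$ by $L^2$ conservation) the extension $\overline{u}_\tau$ satisfies
\[
\overline{u}_\tau(t) = \chi(t) e^{-t\partial_x^3}\Pi_\tau u_0 - \frac12 \chi(t)\int_0^t e^{-(t-s)\partial_x^3}\Pi_\tau\partial_x\big(\chi(s/\delta)\Pi_\tau u_\tau(s)\big)^2\,ds,
\]
and estimate its $X^{s_0,1}$ norm term by term.

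For the linear term, I would prove an analogue of \eqref{bourg1} with $b=1$: since $\Pi_\tau$ restricts to $|k|\le\tau^{-1/3}$, and $\widetilde{\eta(t)e^{-t\partial_x^3}f}(\sigma,k) = \hat\eta(\sigma-k^3)\hat f(k)$, one has $\langle\sigma-k^3\rangle^{1/2}$ harmless in the $\sigma$-integral while a second factor $\langle\sigma-k^3\rangle^{1/2}$ is absorbed using $\langle\sigma-k^3\rangle^{1/2}\lesssim \langle\sigma-k^3\rangle^{1/2}$... — rather, I would simply note $\|\eta(t)e^{-t\partial_x^3}\Pi_\tau f\|_{X^{s_0,1}} \lesssim \|\langle k\rangle^{s_0}\langle k\rangle^3 \mathbf{1}_{|k|\le\tau^{-1/3}}\hat f(k)\|_{\ell^2}\cdot(\text{something})$ is the wrong normalization; the right statement is that each extra half-power of $\langle\sigma-k^3\rangle$ on $\Pi_\tau$-localized functions costs $\tau^{-1/3}$, because in Duhamel the obstruction is the derivative $\partial_x$ together with the resonance relation, not the free evolution, and for the free evolution the $\sigma$-profile is fixed by $\hat\eta$ which is Schwartz. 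Concretely: $\|\eta e^{-t\partial_x^3}\Pi_\tau f\|_{X^{s_0,1}}\lesssim_\eta \|\Pi_\tau f\|_{H^{s_0}}\le\|f\|_{H^{s_0}}$ actually holds with NO loss since $\langle\sigma-k^3\rangle$ acts only in the $\sigma$ variable on $\hat\eta(\sigma-k^3)$; the loss enters purely through the Duhamel term.

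For the Duhamel term I would use the standard $X^{s,b}$ Duhamel estimate at the $b=1$ level: $\|\chi(t)\int_0^t e^{-(t-s)\partial_x^3}F(s)\,ds\|_{X^{s_0,1}}\lesssim \|F\|_{X^{s_0,0}} + \|F\|_{Y^{s_0}}$ (or a variant), and then it remains to bound $\|\Pi_\tau\partial_x(\chi(s/\delta)\Pi_\tau u_\tau)^2\|_{X^{s_0,0}}$. Here I insert the trivial bound $\|\Pi_\tau g\|_{X^{s_0,0}} = \|\langle k\rangle^{s_0}\mathbf 1_{|k|\le\tau^{-1/3}}\tilde g\|_{L^2_\sigma\ell^2_k}$ and compare with the $Y^{s_0}$ norm already controlled via Lemma \ref{bourgaindur}: since $\langle\sigma-k^3\rangle^{-1/2}$ appears in $Y^{s_0}$ but we now want $\langle\sigma-k^3\rangle^0$, on the range $|k|\le\tau^{-1/3}$ one has $\langle\sigma-k^3\rangle^{1/2}\lesssim \tau^{-1/3}$ whenever... no — $\sigma$ is unbounded. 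The correct move is instead to localize $\sigma$ too: the Duhamel integrand $\chi(s/\delta)\Pi_\tau u_\tau$ lives (in $\sigma$) essentially near $\sigma = k^3 + O(1/\delta)$ convolved with the $L^2$-in-$\sigma$ profile of $u_\tau$; writing the product, the output frequency $k$ satisfies $|k|\le 2\tau^{-1/3}$ and $\sigma - k^3$ is controlled by the sum of the two input modulations plus $O(1)$. Splitting into the region where one input has large modulation (absorbed into $X^{s_0,1/2}$ of $u_\tau$, already bounded, with a $\tau^{-1/3}$ from the remaining $\langle\sigma-k^3\rangle^{1/2}\le C|k|\langle k\rangle^{-1}\tau^{-1/3}\cdot(\dots)$) versus the region where both are small (so $\sigma-k^3 = O(\tau^{-1})$ by the truncation, giving $\langle\sigma-k^3\rangle^{1/2}\lesssim\tau^{-1/3}$ directly on the bilinear term), one gets
\[
\|\Pi_\tau\partial_x(\chi(s/\delta)\Pi_\tau u_\tau)^2\|_{X^{s_0,0}} \lesssim \tau^{-1/3}\,\|\Pi_\tau u_\tau\|_{X^{s_0,1/2}}\|\Pi_\tau u_\tau\|_{X^{0,1/3}} + (\text{sym.}) \lesssim \tau^{-1/3} M_T^2
\]
on $[0,\delta]$, using \eqref{bourg2}. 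Combining, $\|u_\tau\|_{X^{s_0,1}([0,\delta])}\lesssim \|u_0\|_{H^{s_0}} + \tau^{-1/3}M_\delta^2$; then I iterate over $\lceil T/\delta\rceil$ intervals — at each step the linear contribution is $\|u_\tau(n\delta)\|_{H^{s_0}}\le M_T$ by Proposition \ref{propKdVP} — and patch the local $X^{s_0,1}$ estimates into a global one on $[0,T]$ (summing the $\lceil T/\delta\rceil$-many contributions, harmless since $\delta$ is uniform in $\tau$), yielding $\|u_\tau\|_{X^{s_0,1}(T)}\le M_T\tau^{-1/3}$.

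The main obstacle I expect is making the "$\langle\sigma-k^3\rangle^{1/2}$ costs $\tau^{-1/3}$" mechanism precise in the bilinear estimate: one cannot naively bound $\langle\sigma-k^3\rangle^{1/2}$ by $\tau^{-1/3}$ globally because $\sigma$ ranges over all of $\mathbb{R}$ — the gain only materializes after exploiting the convolution structure of the product and the resonance identity $(\sigma_1-k_1^3)+(\sigma_2-k_2^3)-(\sigma-k^3) = -3kk_1k_2$ (so that large output modulation forces large input modulation, which is then traded against the $b=1/2$ norm of $u_\tau$), together with $|3kk_1k_2|\lesssim \tau^{-1}$ on the truncated frequencies (controlling the small-modulation case). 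This is precisely the kind of computation deferred to Section \ref{sectiontechnical} at the discrete level, and here it can be done by hand following \cite{KPV,CKSTT} with the extra truncation bookkeeping.
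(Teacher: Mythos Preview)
Your overall architecture is right: write $u_\tau$ via the time-localized Duhamel formula on a short interval $[0,\delta]$ with $\delta$ uniform in $\tau$, use the $b=1$ Duhamel estimate
\[
\Big\|\chi(t)\int_0^t e^{-(t-s)\partial_x^3}F(s)\,ds\Big\|_{X^{s_0,1}} \lesssim \|F\|_{X^{s_0,0}},
\]
bound the forcing $F=\Pi_\tau\partial_x(\chi(\cdot/\delta)\Pi_\tau u_\tau)^2$ in $X^{s_0,0}$, and then iterate. The linear term indeed costs nothing. The issue is entirely in how you extract the $\tau^{-1/3}$ from the bilinear piece.

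Your modulation-splitting argument is both unnecessary and, as written, wrong. In the ``both input modulations small'' case you claim $|\sigma-k^3|=O(\tau^{-1})$ gives $\langle\sigma-k^3\rangle^{1/2}\lesssim\tau^{-1/3}$; but $\tau^{-1/2}\neq\tau^{-1/3}$. More fundamentally, the $X^{s_0,0}$ norm carries no modulation weight at all, so there is no $\langle\sigma-k^3\rangle^{1/2}$ to trade against anything --- the resonance identity and the KPV/CKSTT machinery are simply not relevant here. The paper's proof is much more direct: since the output frequency satisfies $|k|\le\tau^{-1/3}$ on the range of $\Pi_\tau$, the operator $\Pi_\tau\partial_x$ is bounded by $\tau^{-1/3}$ as a Fourier multiplier, and hence
\[
\|\Pi_\tau\partial_x(\chi(\cdot/\delta)\Pi_\tau u_\tau)^2\|_{X^{s_0,0}}
\le \tau^{-1/3}\,\big\|\langle\partial_x\rangle^{s_0}(\chi(\cdot/\delta)\Pi_\tau u_\tau)^2\big\|_{L^2_{t,x}}.
\]
The remaining product is then handled by the generalized Leibniz rule together with the (continuous) $L^4$ Strichartz estimate $\|v\|_{L^4(\mathbb{R}\times\mathbb{T})}\lesssim\|v\|_{X^{0,1/3}}$, giving
\[
\lesssim \tau^{-1/3}\,\|\chi(\cdot/\delta)\Pi_\tau u_\tau\|_{X^{s_0,1/3}}^2 \lesssim \tau^{-1/3}\,\|u_\tau\|_{X^{s_0}}^2 \le \tau^{-1/3}M_T^2,
\]
after which your iteration over the $\lceil T/\delta\rceil$ subintervals goes through exactly as you describe. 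So the fix is to replace the entire modulation discussion by the one-line observation $|k|\,\mathbf{1}_{|k|\le\tau^{-1/3}}\le\tau^{-1/3}$ plus the $L^4$ Strichartz estimate.
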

\begin{proof}
  For $\delta>0$, small enough (depending only on $T$ and $\|u_{0}\|_{H^{s_{0}}}$)  we have that $u_{\tau}$ coincides with the following  fixed point $U_{\tau}
   \in X^{s_{0}}$:
 $$ U_{\tau}(t) =   \chi(t)e^{- t \partial_{x}^3} \Pi_{\tau} u_{0} -  {1 \over 2} \chi(t)  \Pi_{\tau} \int_{0}^t e^{-(t- s)\partial_{x}^3 } 
\partial_{x} \left(  \Pi_{\tau}\chi\left({s \over \delta}\right)U_{\tau}(s)\right)^2\, ds,$$
  where $\chi \in [0, 1]$ is  a  smooth compactly supported function which is equal to $1$ on $[-1, 1]$
  and supported in $[-2, 2]$.  
We thus get that
$$ \|U_{\tau}\|_{X^{s_{0} 1}} \lesssim \|u_{0}\|_{L^2} +  \left\| \partial_{x} \Pi_{\tau} \left( \Pi_{\tau}\chi\left({t \over \delta}\right)U_{\tau} \right)^2
\right\|_{X^{s_{0}, 0}}.$$
Here we have used again \eqref{bourg1} and the following  general estimate for Bourgain spaces:
$$ \left\| \chi(t) \int_{0}^t e^{- (t-s) \partial_{x}^3} F(s) \, ds \right\|_{X^{s, {b}}} \lesssim_{\eta, b} \|F\|_{X^{s, {b-1}}}$$
for $b \in (1/2, 1]$.
 We refer again to \cite{KPV, Bour93b, CKSTT} and the book \cite{Tao06}. 
  To estimate the right hand side, we use that $\Pi_{\tau}$ projects on frequencies $|k| \leq \tau^{- {1 \over 3}}$.  Together with the generalized Leibniz rule 
this yields that
  $$  \|U_{\tau}\|_{X^{s_{0}, 1}} \lesssim \|u_{0}\|_{L^2} + { 1 \over \tau^{1 \over 3}} \left\| \langle \partial_{x} \rangle^{s_{0}} \Pi_{\tau}\chi\left({t \over \delta}\right)U_{\tau}
\right\|_{L^4(\mathbb{R}\times \mathbb{T})}^2.$$
To conclude, we use the Strichartz estimate for KdV on the torus (which is actually used for the proof
of Lemma \ref{bourgaindur}, we again refer to \cite{KPV, Bour93b, CKSTT}) which reads
$$ \| u \|_{L^4(\mathbb{R}\times \mathbb{T})} \lesssim \|u\|_{X^{0, {1 \over 3}}}$$
(we will prove   a discrete version of this estimate in Section
\ref{sectiontechnical}). This yields
 $$ \|U_{\tau}\|_{X^{s_{0}, 1}} \lesssim \|u_{0}\|_{L^2} + { 1 \over \tau^{1 \over 3}} \|U_{\tau}\|_{X^{s_{0}}}^2.$$ 
By iterating the argument, we thus deduce that 
 $$ \|u_{\tau}\|_{X^{s_{0}, {1}}(T)} \leq { M_{T} \over \tau^{1 \over 3}}$$
thanks to  Proposition \ref{propKdVP}.
% we deduce by  an interpolation inequality that  
% $$  \|u_{\tau}\|_{X^{0, b}(T)} \lesssim   \|u_{\tau}\|_{X^{0, {1\over 2}}(T)}^{ 2 - 2b}  \|u_{\tau}\|_{X^{0, 1}(T)}^{ 2b-1}
%  \lesssim { M_{T}\over \tau^{2 b- 1 \over 3}}.$$

\end{proof}

We can also easily get the following estimate on the difference $\|u (t)- u_{\tau}(t) \|_{L^2}$ which was the aim of this section.
\begin{proposition}
\label{corNLSK}
For  $u_{0} \in H^{s_{0}}$, $s_{0} \geq 0$, $\int_{\mathbb{T}} u_{0}=0$,  and every $T>0$, there exists $C_{T}>0$ such that for every $ \tau \in (0, 1],$
 we have the estimate
$$
\|u -u_{\tau}\|_{X^{0 }(T)} \leq  C_{T} \tau^{ s_{0}  \over 3}.
$$
\end{proposition}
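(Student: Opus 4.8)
The plan is to estimate the difference $w = u - u_\tau$ by writing down the equation it satisfies and running the same Bourgain-space fixed-point machinery used in the proof of Theorem \ref{theoKdV}, but now tracking the source term coming from the projection. Subtracting \eqref{KdVP} from \eqref{KdV}, one has $\partial_t w + \partial_x^3 w = -\tfrac12 \partial_x(u^2) + \tfrac12 \Pi_\tau \partial_x (\Pi_\tau u_\tau)^2$, with $w(0) = u_0 - \Pi_\tau u_0 = (1-\Pi_\tau) u_0$. I would split the right-hand side as $-\tfrac12\partial_x(u^2 - (\Pi_\tau u_\tau)^2) - \tfrac12(1-\Pi_\tau)\partial_x(\Pi_\tau u_\tau)^2$, and then factor the quadratic difference as $u^2 - (\Pi_\tau u_\tau)^2 = (u + \Pi_\tau u_\tau)(u - \Pi_\tau u_\tau)$, noting $u - \Pi_\tau u_\tau = w + (1-\Pi_\tau) u$. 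This produces a term linear in $w$ (the ``stability'' term) plus genuinely small ``consistency'' terms involving $(1-\Pi_\tau)$ applied either to $u_0$, to $u$, or to $(\Pi_\tau u_\tau)^2$.

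The key quantitative input is that $(1-\Pi_\tau)$ gains regularity at the cost of powers of $\tau^{1/3}$: since $\Pi_\tau$ projects onto $|k| \le \tau^{-1/3}$, we have $\|(1-\Pi_\tau) f\|_{H^0} \lesssim \tau^{s_0/3}\|f\|_{H^{s_0}}$, and correspondingly at the level of Bourgain norms $\|(1-\Pi_\tau) g\|_{X^{0,b}} \lesssim \tau^{s_0/3}\|g\|_{X^{s_0,b}}$ and likewise for $Y^0$ versus $Y^{s_0}$. So the term $\|(1-\Pi_\tau) e^{-t\partial_x^3} u_0\|_{X^0} \lesssim \tau^{s_0/3}\|u_0\|_{H^{s_0}}$ by \eqref{bourg1}; the term $(1-\Pi_\tau)\partial_x(\Pi_\tau u_\tau)^2$ is estimated in $Y^0$ by $\tau^{s_0/3}$ times its $Y^{s_0}$ norm, which via \eqref{polarbourg}/Lemma \ref{bourgaindur} and Proposition \ref{propKdVP} is $\lesssim \tau^{s_0/3} \|u_\tau\|_{X^{s_0}}^2 \lesssim_T \tau^{s_0/3}$; and the factor $(1-\Pi_\tau) u$ inside the bilinear term gives, again by the bilinear estimate, $\lesssim \tau^{s_0/3}\|u + \Pi_\tau u_\tau\|_{X^0}\|u\|_{X^{s_0}} \lesssim_T \tau^{s_0/3}$ using $u \in X^{s_0}(T)$ from Theorem \ref{theoKdV} and Proposition \ref{propKdVP}. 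Collecting these, on a short interval $[0,\delta]$ one obtains a closed inequality of the form $\|w\|_{X^0(\delta)} \lesssim_T \tau^{s_0/3} + \delta^\epsilon (\|u\|_{X^0} + \|u_\tau\|_{X^0})\|w\|_{X^0(\delta)}$, exactly as in the proof of Theorem \ref{theoKdV}, where the gain $\delta^\epsilon$ ($\epsilon \in (0,1/6)$) comes from \eqref{bourg3} after inserting cutoffs $\chi(t/\delta)$.

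With $\delta$ chosen small enough depending only on $T$ and the (uniformly bounded in $\tau$) norms $\|u\|_{X^0(T)}$, $\|u_\tau\|_{X^0(T)} \le M_T$, the stability term is absorbed into the left, giving $\|w\|_{X^0(\delta)} \le C_T \tau^{s_0/3}$. One then iterates over $O(T/\delta)$ subintervals: on $[\delta,2\delta]$ the ``initial data'' is $w(\delta)$, whose $L^2$ norm is already controlled by the previous step, and the argument repeats with the $X^0$-norm of $w$ at the start of each window playing the role of $\|u_0 - \Pi_\tau u_0\|_{L^2}$; since $\delta$ is uniform and the number of steps is finite, the bound propagates to $[0,T]$ with a constant $C_T$ that may grow (e.g. geometrically) in the number of iterations but stays independent of $\tau$. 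The main obstacle — which is really just the careful bookkeeping rather than a new idea — is organizing the decomposition of $u^2 - (\Pi_\tau u_\tau)^2$ so that every term either is linear in $w$ (hence absorbable) or carries an explicit $(1-\Pi_\tau)$ that can be traded for $\tau^{s_0/3}$ against a Bourgain norm that is uniformly bounded by Theorem \ref{theoKdV}, Proposition \ref{propKdVP}, and Lemma \ref{bourgaindur}; one must also make sure the iteration constant does not secretly depend on $\tau$, which is guaranteed precisely because $\delta$ depends only on $T$ and the uniform bounds $M_T$.
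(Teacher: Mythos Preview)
Your approach is essentially the same as the paper's: both set up the Duhamel formula for $u-u_\tau$, split the nonlinear difference into a term linear in $u-u_\tau$ (absorbed via the $\delta^\epsilon$ gain from \eqref{bourg3}) and consistency terms carrying a factor $(1-\Pi_\tau)$ (traded for $\tau^{s_0/3}$ against the uniform $X^{s_0}$ bounds from Theorem~\ref{theoKdV} and Proposition~\ref{propKdVP} via Lemma~\ref{bourgaindur}), then iterate over $O(T/\delta)$ windows. The paper organizes the decomposition slightly differently---it puts $(1-\Pi_\tau)$ on $u^2$ rather than on $(\Pi_\tau u_\tau)^2$ and then expands $u=(1-\Pi_\tau)u+\Pi_\tau u$ inside the square---but this is only bookkeeping.

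One small algebraic slip: your identity $u-\Pi_\tau u_\tau = w + (1-\Pi_\tau)u$ is not correct as written. The right decomposition is $u-\Pi_\tau u_\tau = (1-\Pi_\tau)u + \Pi_\tau w$, or, since $\Pi_\tau u_\tau = u_\tau$ by Remark~\ref{remlocal}, simply $u-\Pi_\tau u_\tau = w$. Either form still yields exactly the structure you describe (a piece linear in $w$ plus a piece with $(1-\Pi_\tau)$), so the argument goes through unchanged.
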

Since $ X^0(T) \subset \mathcal{C}([0, T], L^2)$, we have in particular that
$$ \sup_{t \in [0, T]} \|u (t)-u_{\tau} (t)\|_{L^2} \leq  C_{T} \tau^{ s_{0}  \over 3}.$$

\begin{proof}
For some $\delta >0$ sufficiently small, we first observe that $u \in X^{s_{0}}(T)$ the solution of \eqref{KdV} coincides on $[0, \delta]$
with the fixed point of $\Phi$ defined in \eqref{fixed} which belongs to $X^{s_{0}}$. We shall (by abuse of notation) still denote by $u$
 this fixed point.
  In a similar way, $u_{\tau} \in   X^{s_{0}}(T)$ coincides on $[0, \delta]$
with the fixed point of $\Phi_{\tau}$ in $X^s_{0}$ that we shall still denote by $u_{\tau}$, where
$$ \Phi_{\tau}(v)(t)=   \chi(t)e^{- t \partial_{x}^3} \Pi_{\tau} u_{0} -  {1 \over 2} \chi(t) \Pi_{\tau}\int_{0}^t e^{-(t- s)\partial_{x}^3 } 
\partial_{x}\left(  \Pi_{\tau}\chi\left({s \over \delta}\right)v(s)\right)^2\, ds.$$
With these notations, we thus get that
\begin{multline} 
\label{estimationdiff}
u(t)- u_{\tau}(t)
=   \chi(t)e^{- t \partial_{x}^3} ( 1 -   \Pi_{\tau} )u_{0}
 - {1 \over 2}\chi(t) ( 1 - \Pi_{\tau}) \int_{0}^t e^{-(t- s)\partial_{x}^3 } 
\partial_{x}\left( \chi\left({s \over \delta}\right)u(s)\right)^2\, ds 
\\- {1 \over 2} \chi(t) \Pi_{\tau}   \int_{0}^t e^{-(t- s)\partial_{x}^3 } 
\partial_{x}\left( \chi\left({s \over \delta}\right) (1-\Pi_{\tau})u(s)\right)^2\, ds \\- 
   \chi(t) \Pi_{\tau}   \int_{0}^t e^{-(t- s)\partial_{x}^3 } 
\partial_{x}\left( \chi\left({s \over \delta}\right) (1-\Pi_{\tau})u(s)\,  \chi\left({s \over \delta}\right) \Pi_{\tau}u(s) \right)\, ds\\
- {1 \over 2}  \chi(t) \Pi_{\tau}   \int_{0}^t e^{-(t- s)\partial_{x}^3 } 
\partial_{x}\left(   \chi\left({s \over \delta}\right) \Pi_{\tau}(u(s)+  u_{\tau}(s))\chi\left({s \over \delta}\right) \Pi_{\tau}(u(s)- u_{\tau}(s))\right)\, ds.
\end{multline}
Thanks to the definition of $\Pi_{\tau}$, we have that 
$$ \| (1 - \Pi_{\tau}) f \|_{L^2} \leq \tau^{ s_{0} \over 3}\| f \|_{H^{s_{0}}}, \quad \forall f \in H^{s_{0}}$$
and thus 
$$ \| (1 - \Pi_{\tau}) f \|_{X^0} \leq \tau^{ s_{0} \over 3} \|f \|_{X^{s_{0}}}, \quad \forall f \in X^{s_{0}}.$$
Consequently, by using this observation and  again \eqref{bourg1}, \eqref{bourg2} as well as Lemma \ref{bourgaindur}, 
we obtain that
\begin{align*} \|u - u_{\tau}\|_{X^0}
& \lesssim  \tau^{s_{0} \over 3}  \|u_{0}\|_{H^{s_{0}}} +  \tau^{s_{0} \over 3} \left\|
  \chi(t) \int_{0}^t e^{-(t- s)\partial_{x}^3 } 
\partial_{x}\left( \chi\left({s \over \delta}\right)u(s)\right)^2\, ds \right\|_{X^{s_{0}}} \\&
 +  \| (1- \Pi_{\tau})u \|_{X^0}^2 + 2 \| (1- \Pi_{\tau}) u \|_{X^0} \|u\|_{X^0}
 + \delta^\epsilon( \|u \|_{X^0} + \|u_{\tau}\|_{X^0}) \|u- u_{\tau}\|_{X^0} \\ &
 \lesssim   \tau^{s_{0} \over 3}(\|u_{0}\|_{H^{s_{0}}} + \|u \|_{X^{s_{0}}}^2)
 +  \delta^\epsilon( \|u \|_{X^0} + \|u_{\tau}\|_{X^0}) \|u- u_{\tau}\|_{X^0}.
 \end{align*}
 Let us fix $M_{T}$ independent of $\tau \in (0, 1]$ and $ \delta \in (0, 1]$  such that
 $$ \| u\|_{X^{s_{0}} }+   \| u_{\tau}\|_{X^{s_{0}}} \leq M_{T}, $$
 we then obtain that
 $$  \|u - u_{\tau}\|_{X^0} \leq \tau^{s_{0} \over 3}(\|u_{0}\|_{H^{s_{0}}} + M_{T}^2)
 +  2 \delta^\epsilon M_{T} \|u- u_{\tau}\|_{X^0}.$$
  By taking $\delta$ sufficiently small so that   $2 \delta^\epsilon M_{T}<1/4$, we then
  obtain that
  $$   \|u - u_{\tau}\|_{X^0} \leq C_{T} \tau^{s_{0} \over 3}$$
  which gives the desired estimate on $[0, \delta]$. We
    can then iterate the argument to get the estimate on the full interval  $[0, T]$.

 \end{proof}
 
 \section{Discrete Bourgain-KdV spaces}
 
 \label{sectionBourgain}

In order to perform error estimates at low regularity, we shall develop at the discrete level
the harmonic analysis tools used in Section \ref{sec:errPro}.
Definitions and properties of discrete Bourgain spaces were introduced (in the context of the nonlinear Schr\"odinger
 equation) in \cite{ORSBourg}. Nevertheless, as in the continuous case, we need additional results
 in order to handle the KdV equation, namely we shall introduce the discrete counterpart of the space $X^s$, 
 study its properties 
  and prove a bilinear estimate analogous to the one of Lemma  \ref{bourgaindur}.
  
  For sequences of functions $(u^{n}(x))_{n \in \mathbb{Z}},$ we define the Fourier transform $\widetilde{u^{n}}(\sigma, k)$ by
$$
\mathcal F_{n,x}(u^n)(\sigma,k) =\widetilde{u^{n}} (\sigma, k)= \tau \sum_{m \in \mathbb{Z}} \widehat{u^{m}}(k) \,\e^{i m \tau \sigma}, \quad \widehat{u^{m}}(k)= {1 \over 2\pi} \int_{-\pi}^\pi u^{m}(x) \,\e^{-i k x}\dd x.
$$
Parseval's identity then reads
\begin{equation}\label{parseval}
\| \widetilde{u^{n}}\|_{L^2l^2}= \|u^{n}\|_{l^2_{\tau}L^2},
\end{equation}
where
$$
\| \widetilde{u^{n}}\|_{L^2l^2}^2 = \int_{-{\pi \over \tau}}^{\pi\over \tau} \sum_{k \in \mathbb{Z}}
|\widetilde{u^{n}}(\sigma, k)|^2 \dd \sigma, \quad
\|u^{n}\|_{l^2_{\tau}L^2}^2 = \tau \sum_{m \in \mathbb{Z}} \int_{-\pi}^\pi  |u^{m}(x)|^2 \dd x.
$$
We  define the discrete Bourgain spaces $X^{s,b}_\tau$ for $s\ge 0$, $b\in\mathbb R$, $\tau>0$ by
%\begin{equation}\label{bourgdef1}
%\| u_n \|_{X^{s,b}_{\tau}}= \| \e^{-i n \tau \partial_{x}^2 } u_{n}\|_{H^b_{\tau} H^s }=  \|
%\langle D_{\tau}\rangle^b \langle \partial_{x} \rangle^s (\e^{-i n \tau \partial_{x}^2 } u_{n})\|_{l^2_{\tau} L^2 }.
%\end{equation}
%With the above definition, we have that
\begin{equation}\label{norm2}
\| u^n \|_{X^{s,b}_{\tau}} = \left\| \langle k \rangle^s \langle  d_{\tau}(\sigma + k^3)  \rangle^b \widetilde{u^n}(\sigma, k)  \right\|_{L^2l^2},
\end{equation}
where  $d_{\tau}(\sigma)=\frac{\e^{i \tau \sigma} - 1}\tau$.
Note that $d_{\tau}$ is $2\pi/\tau$ periodic and that uniformly in $\tau$, we have $|d_{\tau}(\sigma)| \sim | \sigma |$ for $|\tau \sigma | \leq \pi$. Since $|d_{\tau}(\sigma)| \lesssim \tau^{-1}$, we also have
%
%For  $s \in \mathbb{R}$ and $b> 1/2$, we have that $X^{s,b}_{\tau} \subset l^\infty_{\tau}H^s$:
%\begin{equation}\label{sobbourg}
%\|u_{n}\|_{l^\infty_{\tau}H^s } \lesssim_{b} \| u_{n}\|_{X^{s,b}_{\tau}}.
%\end{equation}
%From the properties of the $d_{\tau}$ function, we also have that
%\begin{equation}\label{bshift}
%\sup_{\delta \in [-4, 4]} \| \e^{i n\tau \delta \partial_{x}^2} u_{n}\|_{X^{s, b}_{\tau}} \lesssim_{b} \|u_{n}\|_{X^{s,b}_{\tau}},
%\end{equation}
%\begin{equation}\label{shiftt}
%\sup_{\delta \in [-4, 4]} \|\e^{i n \tau \delta } u_{n}\|_{X^{s, b}_{\tau}} \lesssim_{b} \|u_{n}\|_{X^{s, b}_{\tau}}
%\end{equation}
  that the discrete spaces satisfy the embeddings
\begin{equation}\label{embdisc1}
\|u^{n}\|_{X^{0, b}_{\tau}} \lesssim { 1 \over  \tau^{b-b'}} \|u^{n}\|_{X^{0, b'}_\tau}, \quad b \geq b'.
\end{equation}
Some useful more technical properties are gathered in the following lemma:
\begin{lemma}\label{bourgainfaciled}
For $\eta \in \mathcal{C}^\infty_{c}(\mathbb{R})$ and $\tau\in(0,1]$, we have that
\begin{align}
\label{bourg1d} &\| \eta(n \tau)  \e^{- n \tau \partial_{x}^3} f\|_{X^{s,b}_{\tau}} \lesssim_{\eta, b} \|f\|_{H^s}, \quad s \in \mathbb{R}, \, b \in \mathbb{R}, \, f \in H^s, \\
\label{bourg2d} &\| \eta(n \tau)  u^{n}\|_{X^{s,b}_{\tau}} \lesssim_{\eta, b} \|u^{n}\|_{X^{s,b}_{\tau}}, \quad s \in \mathbb{R}, \, b \in \mathbb{R} , \, u^{n} \in X^{s,b}_{\tau},\\
\label{bourg3d} &\left\| \eta\left(\frac{n\tau}T \right) u^{n} \right\|_{X^{s,b'}_{\tau}} \lesssim_{\eta, b, b'} T^{b-b'} \|u^{n}\|_{X^{s,b}_{\tau}}, \quad s \in \mathbb{R},  -{1 \over 2} <b' \leq b <{ 1 \over 2},\, 0< T = N \tau  \leq 1, \, N \geq 1.
\end{align}
In addition, for
$$
U^{n}(x)= \eta(n \tau) \tau \sum_{m=0}^n  \e^{- ( n-m ) \tau \partial_{x}^3}  u^{m}(x),
$$
we have
\begin{equation}
\label{bourg4d}\|U^{n}\|_{X^{s,b}_{\tau}} \lesssim_{\eta, b} \|u^{n}\|_{X^{s, b-1}_{\tau}}, \quad s \in \mathbb{R}, \, b>1/2.
\end{equation}
We stress that all given estimates are uniform in $\tau$.
\end{lemma}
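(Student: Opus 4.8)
The plan is to transfer the four properties of Lemma \ref{lemmabourgainfacile} to the discrete setting by mimicking the continuous proofs on the Fourier side, the key point being that the discrete modulation $d_\tau(\sigma+k^3)$ plays exactly the role of $\sigma - k^3$ in the continuous Bourgain norm (up to the minus sign convention coming from the different sign of the dispersion relation). Throughout, the workhorse is Parseval's identity \eqref{parseval} and the fact that $|d_\tau(\sigma)|\sim|\sigma|$ for $|\tau\sigma|\le\pi$.

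\medskip

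\emph{Proof of \eqref{bourg1d}.} The sequence $u^n = \eta(n\tau)\e^{-n\tau\partial_x^3}f$ has, at frequency $k$, the value $\widehat{u^n}(k) = \eta(n\tau)\e^{ink^3\tau}\widehat f(k)$ (using $\partial_x^3 \leftrightarrow -ik^3$). Hence $\widetilde{u^n}(\sigma,k) = \widehat f(k)\, \widetilde{\eta_\tau}(\sigma + k^3,\cdot)$ where $\widetilde{\eta_\tau}$ denotes the discrete-in-time Fourier transform of the sequence $(\eta(n\tau))_n$; this shifts all the modulation weight onto $d_\tau(\sigma+k^3)$. Then
$$
\|u^n\|_{X^{s,b}_\tau}^2 = \sum_{k}\langle k\rangle^{2s}|\widehat f(k)|^2 \int_{-\pi/\tau}^{\pi/\tau}\langle d_\tau(\sigma+k^3)\rangle^{2b}\,|\widetilde{\eta_\tau}(\sigma+k^3)|^2\,\dd\sigma,
$$
and since $d_\tau$ is $2\pi/\tau$-periodic the inner integral is independent of $k$ and equals $\int_{-\pi/\tau}^{\pi/\tau}\langle d_\tau(\sigma)\rangle^{2b}|\widetilde{\eta_\tau}(\sigma)|^2\,\dd\sigma$, which is bounded uniformly in $\tau\in(0,1]$ because $\eta$ is Schwartz (so $\widetilde{\eta_\tau}$, a Riemann-sum approximation of $\widehat\eta$, decays fast and the region $|d_\tau(\sigma)|\gtrsim \sigma$ is controlled; one must also check the contribution near $\sigma=\pm\pi/\tau$, where $d_\tau$ is again small). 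This gives the claim. An analogous and shorter argument, convolving in $\sigma$ with $\widetilde{\eta_\tau}$, gives \eqref{bourg2d}: multiplication by $\eta(n\tau)$ becomes convolution, and since $\langle d_\tau(\sigma+k^3)\rangle^b \lesssim \langle d_\tau(\sigma'+k^3)\rangle^b\langle d_\tau(\sigma-\sigma')\rangle^{|b|}$ (a periodic Peetre-type inequality for $d_\tau$, which must be checked separately), Young's inequality closes the estimate.

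\medskip

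\emph{Proof of \eqref{bourg3d}.} This is the only genuinely delicate one. Write $\eta(n\tau/T) = \eta_T(n\tau)$; on the Fourier side this is again a convolution in $\sigma$, now with $\widetilde{(\eta_T)_\tau}$ which concentrates at scale $1/T$. The gain of $T^{b-b'}$ comes, as in the continuous case, from splitting according to whether the modulation $|d_\tau(\sigma+k^3)|$ is $\lesssim 1/T$ or $\gtrsim 1/T$: on the low-modulation part the weight $\langle d_\tau\rangle^{b'}$ is $\lesssim 1$ and we pay $T^{b'}\cdot T^{-b}$... more precisely one follows Bourgain's original argument (see e.g. \cite{Tao06}), which only uses the elementary calculus inequality and the rapid decay of $\widehat\eta$; since $0<T=N\tau\le1$, the scale $1/T\ge1$ stays within the range where $d_\tau(\sigma)\sim\sigma$, so the continuous proof goes through essentially verbatim with $\sigma-k^3$ replaced by $d_\tau(\sigma+k^3)$. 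The restriction $-1/2<b'\le b<1/2$ is exactly what makes the relevant $\sigma$-integrals converge uniformly in the cutoff.

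\medskip

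\emph{Proof of \eqref{bourg4d}.} Here the discrete Duhamel sum $U^n = \eta(n\tau)\,\tau\sum_{m=0}^n \e^{-(n-m)\tau\partial_x^3}u^m$ must be handled. I would follow the standard Bourgain device: after multiplying by $\e^{n\tau\partial_x^3}$ to remove the free evolution, the discrete convolution sum $\tau\sum_{m=0}^n$ against a cutoff sequence has discrete-Fourier symbol that is a regularized version of $1/(i d_\tau(\sigma+k^3))$, i.e. roughly the inverse of the modulation variable, losing exactly one power there; multiplication by $\eta(n\tau)$ again regularizes the behaviour near $d_\tau(\sigma+k^3)=0$ (where the bare symbol is singular), turning it into the bounded weight $\langle d_\tau(\sigma+k^3)\rangle^{-1}$ at the price of the $\eta$-convolution. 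Decomposing the sum $\sum_{m=0}^n = \sum_{m\in\mathbb Z} - \sum_{m<0} - \sum_{m>n}$ and treating the tails via the cutoff, one reduces to a full discrete convolution which is diagonalized by $\mathcal F_{n,x}$, and the hypothesis $b>1/2$ is needed so that $\langle d_\tau(\sigma-\sigma')\rangle^{1-b}$ is $L^2_{\sigma'}$-summable against the fast-decaying $\eta$-kernel. This is the exact discrete analogue of \eqref{bourg4}, and the continuous proof in \cite{Tao06} adapts with $d_\tau$ in place of the linear modulation.

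\medskip

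The main obstacle I anticipate is not any single estimate but the recurring technical point that $d_\tau$ is only comparable to the identity on $|\tau\sigma|\le\pi$ and is $2\pi/\tau$-periodic: one must everywhere check that the contributions from $\sigma$ near the endpoints $\pm\pi/\tau$ (where $d_\tau$ again vanishes, creating a "spurious low-modulation" region absent in the continuous theory) are harmless. This is controlled by the fast decay of the Fourier transforms of the smooth cutoffs $\eta$, $\eta(\cdot/T)$, which suppresses the far-away $\sigma$ region uniformly in $\tau$; making this rigorous — essentially a uniform-in-$\tau$ stationary/non-stationary phase bookkeeping on the discrete time Fourier side — is the bulk of the work, and is exactly the kind of argument already carried out in \cite{ORSBourg} for the Schrödinger case, so I would cite that framework and only spell out the modifications due to the cubic dispersion relation $k^3$ and the resulting borderline regularity.
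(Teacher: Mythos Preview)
Your proposal is correct in outline, and the technical concerns you raise (Peetre-type inequality for $d_\tau$, behavior near $\sigma=\pm\pi/\tau$, etc.) are exactly the ones that need to be addressed. However, you are working harder than necessary, and you slightly miss the paper's key simplifying observation. The paper does not redo the four estimates; instead it notes the identity
\[
\|u^n\|_{X^{s,b}_\tau} = \|e^{n\tau\partial_x^3}u^n\|_{H^b_\tau H^s},\qquad
\|v^n\|_{H^b_\tau H^s} := \|\langle d_\tau(\sigma)\rangle^b\langle k\rangle^s\,\widetilde{v^n}(\sigma,k)\|_{L^2 l^2},
\]
so that conjugation by the free KdV flow sends $X^{s,b}_\tau$ \emph{exactly} onto the space $H^b_\tau H^s$, and the proofs of \cite[Lemma~3.4]{ORSBourg} use only properties of $H^b_\tau H^s$ (the cutoff $\eta$, the Duhamel sum, the time dilation all commute with the conjugation). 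Hence there are \emph{no} modifications required when passing from the Schr\"odinger dispersion $k^2$ to the KdV dispersion $k^3$: the dispersion relation drops out entirely after conjugation. Your plan to ``spell out the modifications due to the cubic dispersion relation $k^3$'' therefore over-engineers the argument; the single-line reduction above already gives the full lemma from \cite{ORSBourg} verbatim. What your more explicit route buys is a self-contained account, which has pedagogical value but is unnecessary here.
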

The proof directly follows from the ones of  \cite[Lemma 3.4]{ORSBourg}. Indeed, it suffices to observe that
$$ \| u^{n} \|_{X^{s,b}_{\tau}}= \| e^{ n \tau \partial_{x}^3 } u^{n}\|_{H^b_{\tau} H^s },$$
where 
$$ \| u^{n}\|_{H^b_{\tau}H^s} := \|   \langle d_{\tau}(\sigma)\rangle^b \langle k \rangle^s  \tilde u^{n} (\sigma, k)\|_{L^2 l^2 }$$
and the proofs only use the properties of the space $H^b_{\tau} H^s.$

The next step that we shall need in order to handle the KdV equation is to adapt  \eqref{bourg4} in the case 
$b=1/2$. We first define the discrete counterpart $X^s_{\tau}$ of the $X^s$ space.
We say that a sequence of function $(u^{n}(x))_{n} \in l^2_{\tau}L^2$ such that 
$ \int_{\mathbb{T}} u^n=0, \, \forall n$ is in $X^s_{\tau}$ for $s \geq 0$ if 
the following norm is finite
$$ \|u^{n}\|_{X^s_{\tau}}= \|u^{n}\|_{X^{s, {1 \over 2}}_{\tau}} + \|\langle k \rangle^s \tilde u( \sigma, k) \|_{l^2(k)L^1(\sigma)}$$
and in the same way,  we also define $Y_{\tau}^s$ by 
 $$ \|F^{n}\|_{Y^s_{\tau}}= \|F^{n}\|_{X^{s, -{1 \over 2}}_{\tau}}  +  \left\|{\langle k \rangle^s \over \langle d_{\tau}( \sigma + k^3\rangle)} \widetilde{F^{ n}}(\sigma, k) \right\|_{l^2(k)L^1(\sigma)}.$$

 \begin{lemma}
 \label{lemb1/2}
  We have the following properties:
 \begin{enumerate}
 \item We have the embedding $X^s_{\tau} \subset l^\infty(\mathbb{Z}, H^s(\mathbb{T}))$:
 \begin{equation}
 \label{sob1/2}
  \sup_{n} \| u^{n}\|_{H^s(\mathbb{T})} \lesssim \|u^{n} \|_{X^s_{\tau}}, \quad s \in \mathbb{R}, \, (u^{n})_{n} \in X^s_{\tau};
  \end{equation}
%  \item For $\eta \in \mathcal{C}^\infty_{c}(\mathbb{R})$, we have that 
%  \begin{equation}
%  \label{bourg1bis}
%  \| \eta(n \tau)  \e^{- n \tau \partial_{x}^3} f\|_{X^{s}_{\tau}} \lesssim_{\eta, b} \|f\|_{H^s}, \quad s \in \mathbb{R},  \, f \in H^s.
%  \end{equation}
 \item  Let us define for $(u^{n})_{n} \in Y^s_{\tau}$,  and $\eta \in \mathcal{C}^\infty_{c}(\mathbb{R})$
 \begin{equation}
 \label{duhabourg}
U^{n}(x): = \eta(n \tau) \tau \sum_{m=0}^n  \e^{- ( n-m ) \tau \partial_{x}^3}  u^{m}(x),
\end{equation}
then, we have
\begin{equation}
\label{bourg1/2}\|U^{n}\|_{X^{s}_{\tau}} \lesssim_{\eta} \|u^{n}\|_{Y^{s}_{\tau}}, \quad s \in \mathbb{R}.
\end{equation}
 \end{enumerate}
 The above estimates are uniform for $\tau \in (0, 1]$.
 \end{lemma}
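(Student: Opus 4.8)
The plan is to prove the two items separately. Item~(1) is immediate and uses only the $l^{2}(k)L^{1}(\sigma)$ part of the $X^{s}_{\tau}$ norm: inverting the time Fourier transform, $\widehat{u^{n}}(k)=\frac{1}{2\pi}\int_{-\pi/\tau}^{\pi/\tau}\widetilde{u^{n}}(\sigma,k)\,\e^{-in\tau\sigma}\,\dd\sigma$, so $|\widehat{u^{n}}(k)|\leq\frac{1}{2\pi}\|\widetilde{u^{n}}(\cdot,k)\|_{L^{1}(\sigma)}$ for all $n,k$; multiplying by $\langle k\rangle^{s}$, squaring and summing over $k$ gives $\sup_{n}\|u^{n}\|_{H^{s}}\lesssim\|\langle k\rangle^{s}\widetilde{u^{n}}\|_{l^{2}(k)L^{1}(\sigma)}\leq\|u^{n}\|_{X^{s}_{\tau}}$, uniformly in $\tau$, which is~\eqref{sob1/2}. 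Item~(2) reproduces at the discrete level the classical proof of the continuous estimate~\eqref{bourg4}, the genuinely new point being the discrete Duhamel decomposition of the nonresonant part.

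For item~(2) I would first record two auxiliary facts whose proofs follow the pattern of Lemma~\ref{bourgainfaciled}. Writing $m_{\tau}(\xi)=\tau\sum_{j}\eta(j\tau)\,\e^{ij\tau\xi}$, Poisson summation gives $\|m_{\tau}\|_{L^{1}((-\pi/\tau,\pi/\tau))}\leq\|\widehat{\eta}\|_{L^{1}(\mathbb{R})}$ uniformly in $\tau\in(0,1]$; since $\widetilde{(\eta(n\tau)u^{n})}(\cdot,k)$ is the $2\pi/\tau$-periodic convolution of $\widetilde{u^{n}}(\cdot,k)$ with $m_{\tau}$, Young's inequality together with~\eqref{bourg2d} shows that multiplication by $\eta(n\tau)$ is bounded on $X^{s}_{\tau}$, uniformly in $\tau$. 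Moreover, combining~\eqref{bourg1d} (with $b=1/2$) with the identity $\widetilde{(\eta(n\tau)\e^{-n\tau\partial_{x}^{3}}g)}(\sigma,k)=\widehat{g}(k)\,m_{\tau}(\sigma+k^{3})$ and the above bound on $\|m_{\tau}\|_{L^{1}}$ yields the free-solution estimate $\|\eta(n\tau)\e^{-n\tau\partial_{x}^{3}}g\|_{X^{s}_{\tau}}\lesssim_{\eta}\|g\|_{H^{s}}$.

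Next I would split $u^{m}=v^{m}+w^{m}$ according to the modulation, with $\widetilde{v}=\widetilde{u}\,\mathbf{1}_{\{|d_{\tau}(\sigma+k^{3})|\leq1\}}$ and $\widetilde{w}=\widetilde{u}\,\mathbf{1}_{\{|d_{\tau}(\sigma+k^{3})|>1\}}$, so that $U^{n}=U^{n}_{v}+U^{n}_{w}$, where $U^{n}_{v}$, $U^{n}_{w}$ are given by~\eqref{duhabourg} with $v^{m}$, resp.\ $w^{m}$. For the resonant part one has $\langle d_{\tau}(\sigma+k^{3})\rangle\sim1$ on the support of $\widetilde{v}$, so $v^{n}\in X^{s,b}_{\tau}$ for every $b$ with $\|v^{n}\|_{X^{s,b}_{\tau}}\sim\|\langle k\rangle^{s}\widetilde{u}\,\mathbf{1}_{\{|d_{\tau}(\sigma+k^{3})|\leq1\}}\|_{L^{2}l^{2}}\lesssim\|u^{n}\|_{X^{s,-1/2}_{\tau}}\leq\|u^{n}\|_{Y^{s}_{\tau}}$; applying~\eqref{bourg4d} with $b=3/4$ gives $\|U^{n}_{v}\|_{X^{s,3/4}_{\tau}}\lesssim\|v^{n}\|_{X^{s,-1/4}_{\tau}}\lesssim\|u^{n}\|_{Y^{s}_{\tau}}$, and I would conclude with the elementary embedding $X^{s,3/4}_{\tau}\hookrightarrow X^{s}_{\tau}$, which follows from Cauchy--Schwarz in $\sigma$ and the uniform bound $\int_{-\pi/\tau}^{\pi/\tau}\langle d_{\tau}(\sigma+k^{3})\rangle^{-3/2}\,\dd\sigma\lesssim1$ (itself a consequence of $|d_{\tau}(\mu)|\sim|\mu|$ for $|\tau\mu|\leq\pi$). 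Thus $\|U^{n}_{v}\|_{X^{s}_{\tau}}\lesssim\|u^{n}\|_{Y^{s}_{\tau}}$.

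The heart of the matter, and the step I expect to be the main obstacle, is the nonresonant part $U^{n}_{w}$: this is the discrete analogue of the decomposition of $\frac{\e^{it\sigma}-\e^{-itk^{3}}}{i(\sigma+k^{3})}$ used for~\eqref{bourg4}, and one must correctly extract from it a ``free wave'' remainder whose modulation weight provides no decay and check that it lands in $H^{s}$ controlled by exactly the $l^{2}(k)L^{1}(\sigma)$ component of $Y^{s}_{\tau}$ (which is the very reason that component is built into the definitions of $X^{s}_{\tau}$ and $Y^{s}_{\tau}$); one must also keep careful track of the $2\pi/\tau$-periodicity in $\sigma$ and of the causal truncation $\sum_{m=0}^{n}$, just as in the continuous proof one tracks the cutoff $\chi(t)$ in~\eqref{fixed}. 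Concretely, using the geometric-sum identity $\tau\sum_{m=0}^{n}\e^{-im\tau\mu}=\tau(\e^{-i(n+1)\tau\mu}-1)/(\e^{-i\tau\mu}-1)$ with $\mu=\sigma+k^{3}$, together with $\e^{in\tau k^{3}}\e^{-i(n+1)\tau(\sigma+k^{3})}=\e^{-in\tau\sigma}\e^{-i\tau(\sigma+k^{3})}$, I would rewrite $\widehat{U^{n}_{w}}(k)=\eta(n\tau)\widehat{A^{n}}(k)-\eta(n\tau)\e^{in\tau k^{3}}c(k)$, where
$$\widetilde{A}(\sigma,k)=\frac{\tau\,\e^{-i\tau(\sigma+k^{3})}}{\e^{-i\tau(\sigma+k^{3})}-1}\,\widetilde{u}(\sigma,k)\,\mathbf{1}_{\{|d_{\tau}(\sigma+k^{3})|>1\}},\qquad c(k)=\frac{1}{2\pi}\int_{\{|d_{\tau}(\sigma+k^{3})|>1\}}\frac{\tau\,\widetilde{u}(\sigma,k)}{\e^{-i\tau(\sigma+k^{3})}-1}\,\dd\sigma ,$$
so that $\e^{in\tau k^{3}}c(k)=\widehat{(\e^{-n\tau\partial_{x}^{3}}g)}(k)$ with $\widehat{g}=c$. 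Since $|\e^{-i\tau\mu}-1|=\tau|d_{\tau}(\mu)|\sim\tau\langle d_{\tau}(\mu)\rangle$ on $\{|d_{\tau}(\mu)|>1\}$, this gives $\|A^{n}\|_{X^{s}_{\tau}}\lesssim\|\langle k\rangle^{s}\langle d_{\tau}(\sigma+k^{3})\rangle^{-1/2}\widetilde{u}\|_{L^{2}l^{2}}+\|\langle k\rangle^{s}\langle d_{\tau}(\sigma+k^{3})\rangle^{-1}\widetilde{u}\|_{l^{2}L^{1}}\lesssim\|u^{n}\|_{Y^{s}_{\tau}}$ and $\|g\|_{H^{s}}\lesssim\|\langle k\rangle^{s}\langle d_{\tau}(\sigma+k^{3})\rangle^{-1}\widetilde{u}\|_{l^{2}L^{1}}\leq\|u^{n}\|_{Y^{s}_{\tau}}$. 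Applying the two preliminary facts (boundedness of multiplication by $\eta(n\tau)$ on $X^{s}_{\tau}$, and the free-solution estimate) to the two terms yields $\|U^{n}_{w}\|_{X^{s}_{\tau}}\lesssim\|u^{n}\|_{Y^{s}_{\tau}}$, and summing the two contributions proves~\eqref{bourg1/2}.
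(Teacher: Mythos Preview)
Your proof is correct. Item~(1) matches the paper's argument verbatim. For item~(2), however, you take a genuinely different route from the paper.

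The paper conjugates by $\e^{n\tau\partial_x^3}$, writes an explicit convolution formula $\widetilde{F^n}(\sigma,k)=\frac{1}{2\pi}\int \frac{\e^{i\tau\sigma_0}}{d_\tau(\sigma_0)}\widetilde{f^n}(\sigma_0,k)\bigl(g(\sigma)-\e^{-i\tau\sigma_0}g(\sigma-\sigma_0)\bigr)\,d\sigma_0$ with $g=\mathcal F_\tau(\eta(n\tau))$, and then splits according to $|\sigma_0|\le 1$ versus $|\sigma_0|>1$; the small-$\sigma_0$ region is handled by Taylor-expanding the bracket to cancel the $1/d_\tau(\sigma_0)$, while the large-$\sigma_0$ region uses the fast decay $|\langle d_\tau(\sigma)\rangle^K g(\sigma)|\lesssim 1$. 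Your approach instead splits the \emph{input} according to modulation, recycles the already-proven estimate~\eqref{bourg4d} with $b=3/4$ together with the embedding $X^{s,3/4}_\tau\hookrightarrow X^s_\tau$ for the resonant piece, and for the nonresonant piece uses the geometric-sum identity to algebraically isolate a free-wave remainder $\eta(n\tau)\e^{-n\tau\partial_x^3}g$ plus a term $A^n$ whose modulation weight is explicitly damped by a full power of $\langle d_\tau\rangle^{-1}$. The two splittings are morally the same (after conjugation, your $\{|d_\tau(\sigma+k^3)|\le 1\}$ becomes the paper's small-$\sigma_0$ region), but your execution is more modular: it reuses Lemma~\ref{bourgainfaciled} as a black box and replaces the Taylor expansion and pointwise kernel estimates by two clean auxiliary facts (boundedness of $\eta(n\tau)\cdot$ on $X^s_\tau$ and the free-solution bound). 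The paper's version is self-contained and slightly more explicit about the role of the smoothness of $\eta$; yours makes the structural analogy with the continuous Duhamel decomposition of $(\e^{it\sigma}-\e^{-itk^3})/i(\sigma+k^3)$ more transparent.
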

 
 \begin{proof} 
 We first prove \eqref{sob1/2}. By definition of our Fourier transforms, we have that for every $k \in \mathbb{Z}$,
  and every $m \in \mathbb{Z}$, we have
 $$ \widehat {u^{m}} (k)= { 1 \over 2 \pi} \int_{- {\pi \over \tau}}^{\pi \over \tau} \widetilde{u^{n}}(\sigma, k) e^{-i m \tau \sigma} \, d \sigma$$
  and hence 
$$ | \widehat {u^{m}} (k)| \leq  { 1 \over 2\pi}\| \widetilde{u^{n}}( \cdot, k) \|_{L^1(\sigma)}.$$
 Consequently, by taking the $l^2$ norm in $k$ and by using the Bessel identity, we obtain
 $$ \| u^{m}\|_{L^2(\mathbb{T})} \lesssim \| \widetilde{u^{n}}( \cdot, k) \|_{l^2(k)L^1(\sigma)} \leq \| 
 { {u^{n}}} \|_{X^0_{\tau}}.$$
 This gives \eqref{sob1/2} for $s=0$, and the general case follows by replacing $u^{n}$ by $\langle \partial_{x} \rangle^s u^{n}.$
 
 Let us now prove \eqref{bourg1/2}. Again, we give the proof for $s=0$, the general case just follows by applying
 $ \langle \partial_{x}\rangle^s$ to the two sides of \eqref{duhabourg}.
  Let us set
 $$ F^{n}(x)= e^{+n \tau \partial_{x}^3} U^{n}(x), \quad f^{n}(x) = e^{+n \tau \partial_{x}^3} u^{n}(x)$$
  so that 
  $$ F^{n}(x)= \eta(n\tau)  \tau \sum_{m=0}^n  f^{m}.$$
 We shall first  prove that
  \begin{equation}
  \label{variant1} \|F^{n}\|_{H^{1 \over 2}_{\tau}L^2} + \|\widetilde{F^n} \|_{l^2(k)L^1(\sigma)} \lesssim \| f^{n}\|_{H^{1 \over 2}_{\tau}L^2} + \left\|  { 1 \over \langle d_{\tau}(\sigma) \rangle}
   \widetilde{f^n}\right\|_{l^2(k) L^1(\sigma)}
   \end{equation}
   which is equivalent to 
   $$ \|{ U}^n\|_{X^{0, {1 \over 2}}_{\tau}} + \|\widetilde{{U}^n}\|_{l^2(k)L^1(\sigma)} \lesssim \|u^n \|_{Y^0}.$$
   By direct computation, we find that 
   $$ \widetilde{F^{n}}(\sigma, k)=  { 1 \over 2\pi} \int_{-{\pi \over \tau}}^{\pi \over \tau} {e^{i \tau \sigma_{0}} \over d_{\tau}(\sigma_{0})} \widetilde{f^{n}}(\sigma_{0}, k)
    \left(g(\sigma)- e^{-i \tau \sigma_{0}} g(\sigma - \sigma_{0})\right)\, d \sigma_{0},$$
    where $g(\sigma)=  \mathcal{F}_{\tau}( \eta(n \tau))(\sigma)$. Note that $g$   is fastly decreasing in the sense that
   \begin{equation}
   \label{gfast}\left| \left\langle  d_{\tau}(\sigma)\right\rangle^K g(\sigma) \right| \lesssim 1,
   \end{equation}
 where   the estimate is uniform  in  $\tau\in (0, 1]$  and $\sigma $ for every $K$. We then 
 split, 
 $$  \widetilde{F^{n}}(\sigma, k)= \int_{|\sigma_{0}| \leq 1} + \int_{| \sigma _{0}| \geq 1}:= \widetilde{F^{n}_{1}}
  +  \widetilde{F^{n}_{2}} .$$
  For $|\sigma_{0}| \leq 1$, we can use the Taylor formula and the fast decay of $g$ to get that
  $$ \left|  { \langle d_{\tau}(\sigma) \rangle^{ 1 \over 2} \over d_{\tau}(\sigma_{0})} \left(g(\sigma)- e^{-i \tau \sigma_{0}} g(\sigma - \sigma_{0})\right)  \right|
   \lesssim { 1 \over \langle d_{\tau}(\sigma - \sigma_{0}) \rangle^K}.$$
   Therefore, we obtain that
   $$ \langle d_{\tau}(\sigma) \rangle^{ 1 \over 2} | \widetilde{F^{n}_{1}}|
    \lesssim \int_{| \sigma _{0}| \leq 1}  {  1 \over \langle d_{\tau}(\sigma - \sigma_{0})\rangle^K} | \widetilde{f^n}(\sigma_{0}, k) | d \sigma_{0}.$$
     By choosing $K$ large enough  we thus find that
    \begin{multline}
    \label{Fn1}
     \| F^n_{1} \|_{H^{1 \over 2}_{\tau}L^2} + \| \widetilde{F^n}\|_{l^2(k)L^1(\sigma)} \\ \lesssim \int_{| \sigma_{0}| \leq 1} \| \widetilde{f^n}(\sigma_{0}, \cdot)\|_{L^2} + \left\|  \int_{| \sigma_{0}| \leq 1} |\widetilde{f}^n|(\sigma_{0},  k)\, d\sigma_{0}  \right\|_{l^2(k)}
      \lesssim \| f^n \|_{H^{- {1 \over 2}}_{\tau}L^2} +\left\|  { 1 \over \langle d_{\tau}(\sigma) \rangle}
   \widetilde{f^n}\right\|_{l^2(k) L^1(\sigma)} ,
   \end{multline}
      where we have used that $| \sigma_{0}| \leq 1$ and  Cauchy-Schwarz (for the first term).
      
      It remains to bound the second term $F^n_{2}$. We write
  $$  \langle d_{\tau}(\sigma) \rangle^{ 1 \over 2} | \widetilde{F^{n}_{2}}|
   \lesssim \int_{| \sigma_{0}| \geq 1}  { 1 \over \langle d_{\tau}(\sigma) \rangle^K} {1 \over  \langle d_{\tau}(\sigma_{0}) \rangle }
    | \widetilde{f^n}(\sigma_{0}, k) | d \sigma_{0} +   \int_{| \sigma_{0}| \geq 1}  {1 \over  \langle d_{\tau}(\sigma_{0}) \rangle^{1 \over 2} }
    | \widetilde{f^n}(\sigma_{0}, k) | { 1 \over \langle d_{\tau}(\sigma - \sigma_{0}) \rangle^K } d \sigma_{0},$$
   where we have used that
   $\langle d_{\tau} (\sigma ) \rangle^{1 \over 2} | g(\sigma)| \lesssim \langle d_{\tau} (\sigma) \rangle^{-K}$  for the first term 
   and $  \langle d_{\tau} (\sigma ) \rangle^{1 \over 2} |g(\sigma- \sigma_{0}) | \leq  \langle d_{\tau}(\sigma_{0})\rangle^{1 \over 2}
    \langle d_{\tau}( \sigma - \sigma_{0} )\rangle^{-K}$
   for the second one with $K$ large enough. By taking the $L^2$ norm in $\sigma$ and by using the Young inequality for convolutions for the second
   term, we get that
  $$ \| \langle d_{\tau}(\sigma) \rangle^{ 1 \over 2}  \widetilde{F^{n}_{2}}(\cdot, k) \|_{L^2(\sigma)}
    \lesssim  \left \| {1 \over  \langle d_{\tau} \rangle }
     \widetilde{f^n}(\cdot, k)   \right\|_{L^1(\sigma)} + \left\|
     {1 \over  \langle d_{\tau} \rangle^{1 \over 2} }
     \widetilde{f^n}(\cdot, k)  \right\|_{L^2(\sigma)}.$$
     Finally, by taking the $l^2$ norm in $k$, we obtain that
   \begin{equation}
   \label{Fn21} \| \widetilde{F^{n}_{2}}\|_{H^{1 \over 2}_{\tau}l^2(k)}
    \lesssim  \left\|  { 1 \over \langle d_{\tau}(\sigma) \rangle}
   \widetilde{f^n}\right\|_{l^2(k) L^1(\sigma)} + \|f^n \|_{H^{- {1 \over 2}}_{\tau}l^2(k)}.
   \end{equation}
   From the fast decay of $g$, we also have by similar arguments that
   $$   |\widetilde{F^{n}_{2}}(\sigma, k)|
   \lesssim \int_{| \sigma_{0}| \geq 1}  { 1 \over \langle d_{\tau}(\sigma) \rangle^K} {1 \over  \langle d_{\tau}(\sigma_{0}) \rangle }
    | \widetilde{f^n}(\sigma_{0}, k) | d \sigma_{0} +   \int_{| \sigma_{0}| \geq 1}  {1 \over  \langle d_{\tau}(\sigma_{0}) \rangle }   | \widetilde{f^n}(\sigma_{0}, k) | { 1 \over \langle d_{\tau}(\sigma - \sigma_{0}) \rangle^K } d \sigma_{0}.$$
    By taking the $L^1$ norm in $\sigma$ and then the $l^2$ norm in $k$, we thus find that
    \begin{equation}
    \label{Fn22}   \|\widetilde{F^{n}_{2}}(\sigma, k)\|_{l^2(k) L^1(\sigma)} \lesssim \left\|  {1 \over \langle d_{\tau}(\sigma) \rangle} \widetilde{f^n} \right\|_{l^2(k)L^1(\sigma)}.
    \end{equation}
    
  Gathering \eqref{Fn1}, \eqref{Fn21} and \eqref{Fn22}, we finally get  \eqref{variant1}, this  ends the proof of \eqref{bourg1/2}.

 \end{proof}

 The next result, we will need is the discrete counterpart of Lemma \ref{bourgaindur}:
 \begin{lemma}
 \label{bourgaindurd}
 For every $s \geq 0$, there exists $C>0$ such that for every $(u^n)_{n}$, $(v^n)_{n} \in X^{s}_{\tau}$, we have the estimate
 $$ \| \partial_{x} \Pi_{\tau} \left( \Pi_{\tau} u^n\,  \Pi_{\tau}v^n \right) \|_{Y^{s}_{\tau}}
  \leq C \left(\|u^n \|_{X^{s, {1 \over 2} }_{\tau}} \|v^n \|_{X_{\tau}^{s, {1 \over 3}}} + \|v^n\|_{X^{s, {1 \over 2} }_{\tau}} \|u^n \|_{X_{\tau}^{s, {1 \over 3}}}
  \right).$$
 \end{lemma}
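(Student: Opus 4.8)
The plan is to transpose the proof of the continuous bilinear estimate of Lemma~\ref{bourgaindur} (in the spirit of \cite{KPV,CKSTT}) to the discrete setting; the one genuinely new ingredient will be a discrete counterpart of the algebraic resonance identity $(\sigma+k^3)-(\sigma_1+k_1^3)-(\sigma_2+k_2^3)=3kk_1k_2$, which becomes available precisely because of the filter $\Pi_{\tau}$. First I would reduce to $s=0$. On the space--time Fourier side, $\widetilde{\partial_{x}\Pi_{\tau}(\Pi_{\tau}u^n\,\Pi_{\tau}v^n)}(\sigma,k)$ is, up to the factor $ik$ and the cut-off $|k|\le\tau^{-1/3}$, the convolution --- in $k$, and $2\pi/\tau$--periodic in $\sigma$ --- of the frequency-truncated transforms of $u^n$ and $v^n$; on its support one has $k=k_1+k_2$, $\sigma\equiv\sigma_1+\sigma_2\pmod{2\pi/\tau}$, $|k|,|k_1|,|k_2|\le\tau^{-1/3}$, and $k_1,k_2\neq0$ (the inputs have zero mean), while $k\neq0$ is forced by $\partial_{x}$. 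Using $\langle k\rangle^{s}\lesssim\langle k_1\rangle^{s}+\langle k_2\rangle^{s}$ and $\|\langle\partial_{x}\rangle^{s}u^n\|_{X^{0,b}_{\tau}}=\|u^n\|_{X^{s,b}_{\tau}}$, the weight $\langle k\rangle^{s}$ coming from the $Y^{s}_{\tau}$--norm may be moved onto either factor, so it suffices to prove the symmetric estimate
\begin{equation*}
\|\partial_{x}\Pi_{\tau}(\Pi_{\tau}u^n\,\Pi_{\tau}v^n)\|_{Y^{0}_{\tau}}\lesssim\|u^n\|_{X^{0,1/2}_{\tau}}\|v^n\|_{X^{0,1/3}_{\tau}}+\|v^n\|_{X^{0,1/2}_{\tau}}\|u^n\|_{X^{0,1/3}_{\tau}}.
\end{equation*}

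The key algebraic input is the following discrete resonance relation. Writing $\mu=\sigma+k^3$, $\mu_j=\sigma_j+k_j^3$ for $j=1,2$ and $\mu_3=3kk_1k_2$, the identity $k^3=k_1^3+k_2^3+3kk_1k_2$ gives $\mu\equiv\mu_1+\mu_2+\mu_3\pmod{2\pi/\tau}$, hence, using $\e^{i\tau\mu}=1+\tau d_{\tau}(\mu)$,
\begin{equation*}
1+\tau d_{\tau}(\mu)=\bigl(1+\tau d_{\tau}(\mu_1)\bigr)\bigl(1+\tau d_{\tau}(\mu_2)\bigr)\bigl(1+\tau d_{\tau}(\mu_3)\bigr).
\end{equation*}
Because $|k|,|k_1|,|k_2|\le\tau^{-1/3}$ we have $3|kk_1k_2|\le3\tau^{-1}<\pi\tau^{-1}$, so $\mu_3$ stays inside the fundamental domain of $d_{\tau}$ and $|d_{\tau}(\mu_3)|\sim|kk_1k_2|$; expanding the product and using $\tau|d_{\tau}(\mu_j)|\lesssim1$ yields $|d_{\tau}(\mu)|\lesssim|d_{\tau}(\mu_1)|+|d_{\tau}(\mu_2)|+|kk_1k_2|$ and, in particular,
\begin{equation*}
\max\bigl(\langle d_{\tau}(\sigma+k^3)\rangle,\ \langle d_{\tau}(\sigma_1+k_1^3)\rangle,\ \langle d_{\tau}(\sigma_2+k_2^3)\rangle\bigr)\gtrsim|k|\,|k_1|\,|k_2|.
\end{equation*}
This is exactly where the filter is indispensable: without it $|kk_1k_2|$ could overshoot the Nyquist scale $\tau^{-1}$, $d_{\tau}(\mu_3)$ would wrap around, and the relation would collapse --- the discrete shadow of the loss of derivative in Burgers' nonlinearity.

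With the resonance relation at hand the argument follows the continuous one. For the $X^{0,-1/2}_{\tau}$ component of the $Y^{0}_{\tau}$--norm I would dualise against $w^n\in X^{0,1/2}_{\tau}$, reducing to a trilinear form $\int\!\int\sum_{k=k_1+k_2}ik\,\overline{\widetilde{w^n}}\,\widetilde{u^n}\,\widetilde{v^n}$ with $\sigma\equiv\sigma_1+\sigma_2$, and split into the three regions in which $\langle d_{\tau}(\mu)\rangle$, $\langle d_{\tau}(\mu_1)\rangle$ or $\langle d_{\tau}(\mu_2)\rangle$ is largest. Since $|k|\le|k_1|+|k_2|$ and $|k_1|,|k_2|\ge1$, the resonance relation gives $|k|\lesssim(\text{largest modulation})^{1/2}$, so $\partial_{x}$ is fully absorbed by half a power of that modulation; assigning it to whichever of $u^n,v^n,w^n$ carries the largest modulation and bounding the remaining space--time integral via $\|FG\|_{l^2_{\tau}L^2}\le\|F\|_{l^4_{\tau}L^4}\|G\|_{l^4_{\tau}L^4}$ and the discrete $L^4$ Strichartz estimate $\|u^n\|_{l^4_{\tau}L^4}\lesssim\|u^n\|_{X^{0,1/3}_{\tau}}$ (to be established in Section~\ref{sectiontechnical}) yields precisely the desired right-hand side. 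The $l^2(k)L^1(\sigma)$ component of $Y^{0}_{\tau}$ is treated by the same region decomposition, performing now first a Cauchy--Schwarz in $\sigma$ --- admissible because $d_{\tau}$ is $2\pi/\tau$--periodic with $|d_{\tau}(\sigma)|\sim|\sigma|$ on a period, so $\langle d_{\tau}(\sigma+k^3)\rangle^{-1-2\epsilon}$ is integrable in $\sigma$ uniformly in $\tau$ and $k$ --- and then closing the bound by a refinement of the same case analysis together with Strichartz, in the manner of the borderline $b=1/2$ argument of \cite{KPV,CKSTT}.

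The hard part will be the discrete resonance relation and its careful exploitation: one must track the $2\pi/\tau$--periodicity of $d_{\tau}$, verify that the filter confines $|kk_1k_2|$ below $\tau^{-1}$ so that $d_{\tau}(\mu_3)\sim kk_1k_2$, and then reproduce at the discrete level the already delicate case analysis of the periodic-KdV bilinear estimate --- notably the high--high interaction and the $l^2(k)L^1(\sigma)$ endpoint --- while keeping all constants uniform in $\tau$.
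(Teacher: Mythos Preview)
Your proposal is correct and follows essentially the same architecture as the paper's proof in Section~\ref{sectiontechnical}: reduce to $s=0$, dualise each piece of the $Y^{0}_{\tau}$ norm, use a discrete resonance relation to absorb the derivative into the largest modulation, and close with the discrete $L^4$ Strichartz estimate of Lemma~\ref{lemdiscStrich}.

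There is one genuine difference worth recording. To obtain the key inequality
\[
\max\bigl(\langle d_{\tau}(\sigma+k^3)\rangle,\ \langle d_{\tau}(\sigma_1+k_1^3)\rangle,\ \langle d_{\tau}(\sigma_2+k_2^3)\rangle\bigr)\gtrsim |k|\,|k_1|\,|k_2|,
\]
the paper does \emph{not} work with a multiplicative identity; it splits according to whether one of the continuous modulations $|\sigma+k^3|$, $|\sigma'+k'^3|$ exceeds a threshold $\epsilon/\tau$. In the large--modulation regime the bound $|k|\le\tau^{-1/3}$ together with $\langle d_{\tau}\rangle^{1/2}\gtrsim\tau^{-1/2}$ suffices directly; in the small--modulation regime all three arguments lie in the range where $|d_{\tau}(\cdot)|\sim|\cdot|$, so one is back to the continuous identity \eqref{intercont}. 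Your derivation via $1+\tau d_{\tau}(\mu)=\prod_{j}(1+\tau d_{\tau}(\mu_j))$ is more compact and yields the same inequality in one stroke; both crucially need $3|kk_1k_2|\le 3\tau^{-1}<\pi\tau^{-1}$, which is exactly what the filter $\Pi_{\tau}$ provides.

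One caution on the $l^2(k)L^1(\sigma)$ piece: a bare Cauchy--Schwarz in $\sigma$ would reduce matters to an $X^{0,-\frac12+\epsilon}_{\tau}$ estimate, and in the region where the \emph{output} modulation is the largest this leaves an unbounded factor $|k_1k_2|^{2\epsilon}$. The paper handles this part by dualising against $\|\widetilde{w^n}\|_{l^2(k)L^\infty(\sigma)}\le 1$ and, in that region, exploiting the sharper bound $|\sigma+k^3|\gtrsim|k|\,|k_1|\,|k_2|\ge|k|^2$ together with the explicit integral $\int\langle|d_{\tau}(\sigma+k^3)|+|k|^2\rangle^{-2}\,d\sigma\lesssim|k|^{-2}$. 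Your phrase ``a refinement of the same case analysis'' points in the right direction, but this refinement is not optional --- it is precisely what replaces the naive Cauchy--Schwarz step.
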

 
 Note that as in the continuous case, the above estimate does not involve additional space derivatives in the right hand-side.
  The use of the projections $\Pi_{\tau}$ is crucial to get this property. Since the understanding of the proof of this lemma is not
  essential to understand the error estimates, we postpone it to Section \ref{sectiontechnical}.
  
  The last property we shall need is to relate the discrete and the  continuous Bourgain norms for  the sequence
   defined by $u^{n}=u_{\tau}(t_{n})$ where $u_{\tau}$ is the solution of \eqref{KdVP} given by Proposition \ref{propKdVP}.
    We shall still denote by $u_{\tau}$ an extension of $u_{\tau} \in X^{s_{0}}$ which coincides with 
     $u_{\tau}$ on $[- 4T , 4T ]$ and such that thanks to Proposition \ref{propKdVP} and Corollary \ref{corbbig}
     \begin{equation}
     \label{extutau}
   \|u_{\tau}\|_{X^{s_{0}}} + \tau^{1 \over 3}   \|u_{\tau}\|_{X^{s_{0}, 1}} \leq M_{T}
   \end{equation}
   for some $M_{T}$ independent of $\tau\in (0, 1]$.
   
   \begin{lemma}
   \label{lemmadisc-cont}
   Let  $T \geq 1$  and  let  $u_{\tau}$ be an extension  as above of  the solution of \eqref{KdVP} given by Proposition \ref{propKdVP}. 
    Then, there exists $C_{T}>0$ such that for every $\tau \in (0, 1]$, 
  we have the estimate
  $$ \sup_{s \in [-4 \tau, 4 \tau]}  \left\|u_{\tau}(t_{n}+ s) \right\|_{X^{s_{0}, {1 \over 2}}_{\tau}} 
   \leq C _{T}.$$
   
   \end{lemma}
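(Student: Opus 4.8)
The plan is to express the sequence $u_\tau(t_n+s)$ in terms of its Duhamel representation and then apply the discrete Bourgain space estimates from Lemma~\ref{bourgainfaciled}, exactly mirroring the structure of the continuous bound $\|u_\tau\|_{X^{s_0,1/2}}\lesssim\|u_\tau\|_{X^{s_0}}$, but keeping careful track of the fact that $|d_\tau(\sigma)|$ only behaves like $|\sigma|$ in the range $|\tau\sigma|\le\pi$. First I would fix a cutoff $\eta\in\mathcal C^\infty_c(\mathbb R)$ equal to $1$ on $[-2T,2T]$ and supported in $[-4T,4T]$, so that on the relevant range of indices $\eta(n\tau)=1$ and we may freely insert $\eta(n\tau)$ into every norm thanks to \eqref{bourg2d}. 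The key point is that the extension $u_\tau$ of the solution of \eqref{KdVP} satisfies the integral equation, so the sequence $w^n:=u_\tau(t_n+s)$ satisfies
\begin{equation*}
w^n = \mathrm e^{-t_n\partial_x^3}\,\mathrm e^{-s\partial_x^3}\Pi_\tau u_0 - \tfrac12\int_0^{t_n+s}\mathrm e^{-(t_n+s-\sigma)\partial_x^3}\Pi_\tau\partial_x(\Pi_\tau u_\tau(\sigma))^2\,d\sigma.
\end{equation*}
The strategy is to split the time integral into the ``grid part'' $\int_0^{t_n}$, which is a genuine discrete Duhamel sum up to a quadrature error, and the ``remainder'' $\int_{t_n}^{t_n+s}$ of length $|s|\le 4\tau$.

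For the grid part I would write $\int_0^{t_n}$ as $\tau\sum_{m=0}^{n-1}$ applied to $\mathrm e^{-(t_n-t_m)\partial_x^3}F^m$ with $F^m$ a suitable average of $\Pi_\tau\partial_x(\Pi_\tau u_\tau)^2$ over $[t_m,t_{m+1}]$, plus a quadrature remainder; then \eqref{bourg4d} (with $b=1$) together with \eqref{embdisc1} controls the discrete Duhamel sum by $\|F^n\|_{X^{s_0,0}_\tau}\lesssim\|F^n\|_{l^2_\tau L^2}$, and here one uses that $\Pi_\tau\partial_x$ costs only $\tau^{-1/3}$ together with the $L^4$ Strichartz-type bound and \eqref{extutau}, just as in the proof of Corollary~\ref{corbbig}; the extra powers of $\tau$ are absorbed because $X^{s_0,1/2}_\tau$ sits below $X^{s_0,1}_\tau$ with a factor $\tau^{1/2}$ by \eqref{embdisc1}, and $\tau^{1/2}\tau^{-1/3}$ is a positive power of $\tau$. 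The linear term $\mathrm e^{-t_n\partial_x^3}g$ with $g=\mathrm e^{-s\partial_x^3}\Pi_\tau u_0$ is handled directly by \eqref{bourg1d}: $\|\eta(n\tau)\mathrm e^{-t_n\partial_x^3}g\|_{X^{s_0,1/2}_\tau}\lesssim\|g\|_{H^{s_0}}=\|\Pi_\tau u_0\|_{H^{s_0}}\le\|u_0\|_{H^{s_0}}$, uniformly in $s$ since $\mathrm e^{-s\partial_x^3}$ is unitary on $H^{s_0}$.

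The remaining short-time piece $\int_{t_n}^{t_n+s}\mathrm e^{-(t_n+s-\sigma)\partial_x^3}\Pi_\tau\partial_x(\Pi_\tau u_\tau(\sigma))^2\,d\sigma$ I would bound crudely in $l^\infty_n H^{s_0}$ by $|s|\cdot\sup_\sigma\|\Pi_\tau\partial_x(\Pi_\tau u_\tau(\sigma))^2\|_{H^{s_0}}\lesssim \tau\cdot\tau^{-1/3}\sup_\sigma\|u_\tau(\sigma)\|_{H^{s_0}}^2$ using the frequency localization of $\Pi_\tau$ and Sobolev embedding, and then feed this into \eqref{bourg1d} after noting that a sequence of the form $h^n=\eta(n\tau)r^n$ with $\sup_n\|r^n\|_{H^{s_0}}$ bounded has $X^{s_0,1/2}_\tau$ norm controlled by $\tau^{-1/2}\sup_n\|r^n\|_{H^{s_0}}$ via \eqref{embdisc1} starting from the trivial bound in $X^{s_0,0}_\tau$. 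Again $\tau^{-1/2}\cdot\tau\cdot\tau^{-1/3}=\tau^{1/6}$ is harmless. The main obstacle I anticipate is the \emph{bookkeeping of the quadrature error}: replacing the continuous Duhamel integral by a discrete sum introduces a commutator between the propagator $\mathrm e^{-(t_n-\sigma)\partial_x^3}$ and its value at grid points, and since $\partial_x^3$ is unbounded this must be estimated using the oscillation in $d_\tau(\sigma+k^3)$ rather than naive Taylor expansion — one has to verify that after the $\Pi_\tau$ truncation (which caps $|k|^3\le\tau^{-1}$) this error is again of size $\tau^{\text{positive}}$ times the $X^{s_0}_\tau$ norm of $u_\tau$, which is where the interplay between the filter and the structure of $d_\tau$ is essential; everything else is a routine application of Lemma~\ref{bourgainfaciled} and the estimates \eqref{extutau}.
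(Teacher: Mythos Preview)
Your approach is genuinely different from the paper's, and it contains a real error in the grid part.

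The claim that ``$X^{s_0,1/2}_\tau$ sits below $X^{s_0,1}_\tau$ with a factor $\tau^{1/2}$ by \eqref{embdisc1}'' is false. The inequality \eqref{embdisc1} reads $\|u^n\|_{X^{0,b}_\tau}\lesssim\tau^{-(b-b')}\|u^n\|_{X^{0,b'}_\tau}$ for $b\ge b'$: it says that \emph{raising} $b$ costs $\tau^{-(b-b')}$, not that lowering $b$ \emph{gains} a positive power of $\tau$. Passing from $X^{s_0,1}_\tau$ to $X^{s_0,1/2}_\tau$ is just the trivial embedding with constant $1$ (take $u^n$ with $\widetilde{u^n}$ concentrated near $d_\tau(\sigma+k^3)=0$ to see that no better constant is possible). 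Consequently your chain gives only
\[
\|U^n\|_{X^{s_0,1/2}_\tau}\le \|U^n\|_{X^{s_0,1}_\tau}\lesssim \|F^n\|_{X^{s_0,0}_\tau}\lesssim \tau^{-1/3}M_T^2,
\]
which blows up as $\tau\to 0$. The same misreading of \eqref{embdisc1} does \emph{not} affect your short-time remainder estimate, where you correctly use $\|h^n\|_{X^{s_0,1/2}_\tau}\lesssim\tau^{-1/2}\|h^n\|_{X^{s_0,0}_\tau}$ (here $b=1/2>b'=0$, so the loss is legitimate and is beaten by the extra $\tau$ from $|s|\le 4\tau$). A secondary issue: your bound $\|(\Pi_\tau u_\tau)^2\|_{H^{s_0}}\lesssim\|u_\tau\|_{H^{s_0}}^2$ requires the algebra property $s_0>1/2$ and fails for small $s_0$, so even the remainder argument needs more care.

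The paper avoids all of this by taking a completely different route: it never uses the Duhamel structure at the discrete level. Instead it writes the discrete Fourier transform of the sampled sequence $u_\tau(t_n+s)$ via Poisson summation as a sum of $2\pi/\tau$-shifts of the \emph{continuous} space-time Fourier transform, separates the $m=0$ term from the aliasing terms $m\neq 0$, and shows directly that
\[
\|u_\tau(t_n+s)\|_{X^{s_0,1/2}_\tau}\lesssim \|u_\tau\|_{X^{s_0,1/2}}+\tau^{1/2}\|u_\tau\|_{X^{s_0,1}}.
\]
The right-hand side is then bounded by $M_T+\tau^{1/2}\cdot\tau^{-1/3}M_T\lesssim M_T$ using \eqref{extutau}. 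The $\tau^{1/2}$ gain you were looking for is real, but it comes from the aliasing estimate (the $m\neq 0$ shifts live at frequencies $\gtrsim 1/\tau$, which is where the factor $\tau$ appears after Cauchy--Schwarz), not from any embedding between discrete Bourgain spaces.
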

   
   \begin{proof}
  Let us set 
   $f= \langle \partial_{x} \rangle^{s_{0}}\e^{-it \partial_{x}^2} u_{\tau}( \cdot + s)$ and $f^{n}(x)= f(n\tau, x)$, it suffices to prove that
$$
\|f^{n}\|_{H^{1 \over 2}_{\tau}L^2} \lesssim \|f\|_{H^{1 \over 2}L^2} + \tau^{1 \over 2} \| f \|_{H^1L^2}.
$$
Then we can conclude from \eqref{extutau}.

The discrete Fourier transform of the sequence $(f_{m})_{m}$ is by definition given by
$$
\widetilde{f^m}(\sigma, k)= \tau \sum_{n\in\mathbb Z} \widetilde f(n\tau, k) \,\e^{in \tau \sigma}.
$$
We thus  have by Poisson's summation formula that
$$
\widetilde{f^n}(\sigma, k)= \sum_{m \in \mathbb{Z}} \widetilde f\Bigl( \sigma + { 2 \pi \over \tau} m, k\Bigr), \quad
 \sigma \in [- \pi/\tau, \pi/\tau].
$$
Therefore,
$$
\langle d_{\tau}(\sigma) \rangle^{1 \over 2} \widetilde{f^n}(\sigma, k) = d_{\tau}(\sigma) \tilde f(\sigma, k) + \sum_{m  \in \mathbb{Z}, \, m \neq 0} \left\langle d_{\tau}(\sigma) \right\rangle^{1 \over 2} \widetilde f\Bigl( \sigma + { 2 \pi \over \tau} m, k\Bigr).
$$
 Since, we have   $|d_{\tau}(\sigma)| \lesssim \langle \sigma \rangle$, we get 
from  Cauchy--Schwarz that 
\begin{multline*}
|\langle d_{\tau}(\sigma) \rangle^{1 \over 2} \widetilde{f^n}(\sigma, k)|^2 \lesssim
 | \langle \sigma \rangle^{1 \over 2} \tilde f (\sigma, k )|^2 + 
 \sum_{\mu \neq 0} { 1 \over \left\langle \sigma +  { 2 \pi \over \tau} \mu \right \rangle^{2} }
\sum_{m \in \mathbb{Z}, \, m \neq 0} \left\langle \sigma + { 2 \pi \over \tau} m \right\rangle^{2} \langle d_{\tau}(\sigma)\rangle\left| \widetilde f\Bigl( \sigma + { 2 \pi \over \tau} m, k\Bigr)\right|^2.
%\lesssim   \sum_{m \in \mathbb{Z}} \left\langle \sigma + { 2 \pi \over \tau} m \right\rangle^{2b + 2b'}\left| \widetilde f\Bigl( \sigma + { 2 \pi \over \tau} m, k\Bigr)\right|^2
\end{multline*}
We then observe that for $\sigma \in [- \pi/\tau, \pi/\tau]$, $\mu  \neq 0$ we have that
$$ \left | \sigma + {2\pi \mu \over \tau }\right| \geq {\pi  |\mu| \over \tau} $$ so
that
$$  \sum_{\mu \neq 0} { 1 \over \left\langle \sigma +  { 2 \pi \over \tau} \mu \right \rangle^{2} }
 \lesssim \tau^2.$$
 By using that $|d_{\tau}(\sigma)| \leq 2/ \tau$, we thus find that
$$ |\langle d_{\tau}(\sigma) \rangle^{1 \over 2} \widetilde{f^n}(\sigma, k)|^2 \lesssim
 | \langle \sigma \rangle^{1 \over 2} \tilde f (\sigma, k )|^2 +  \tau 
 \sum_{m \in \mathbb{Z}, \, m \neq 0} \left\langle \sigma + { 2 \pi \over \tau} m \right\rangle^{2}\left| \widetilde f\Bigl( \sigma + { 2 \pi \over \tau} m, k\Bigr)\right|^2.$$
  By integrating with respect to $\sigma \in [-\pi/\tau, \pi/\tau]$ and summing over $k$, we thus  obtain that
$$
\| f^{n}\|_{H^{1 \over 2}_{\tau}L^2} \lesssim
\|f \|_{H^{1 \over 2}L^2} + \tau^{1 \over 2}
 \| f \|_{H^1L^2}.
$$
This ends the proof.
\end{proof}

\section{Error estimate of the time discretisation of the modified projected equation}\label{sec:err}
In this section we derive an estimate on    the time discretisation error introduced by the discretisation~\eqref{scheme} applied to the  projected equation~\eqref{KdVP}. This will give an estimate on
$$
\Vert \uta(t_n) - u^n\Vert_{L^2}.
$$
Let us denote by $\Phi^\tau$ the numerical flow of \eqref{scheme} and by $\varphi_\tau^t$ the exact flow of the projected KdV equation \eqref{KdV}. Then we have
\[
\varphi_\tau^t(\uta(t_n)) = \uta(t_n+t) \quad \text{and} \quad u^{n+1} = \Phi^\tau(u^n).
\]
The mild solution of the projected KdV equation  \eqref{KdVP} is given by Duhamel's formula
\begin{equation}\label{exact}
\uta(t_n+\tau) = \varphi_\tau^\tau(\uta(t_n)) = e^{ - \tau \partial_x^3} \uta(t_n) - \frac12 e^{ - \tau \partial_x^3} \int_0^\tau 
e^{ s \partial_x^3}
\Pi_{\taul} \partial_x \left(  \Pi_{\taul} \uta(t_n+s)\right) ^2
ds.
\end{equation}
With the aid of the notation \eqref{Psi} we can furthermore express the numerical flow $\Phi^\tau$ applied to some function $v$ as follows
\begin{equation}\label{numi}
\Phi^\tau(v)  = \mathrm{e}^{-\tau \partial_x^3}  v
- \frac12 
 \mathrm{e}^{-\tau \partial_x^3}  \int_0^\tau 
\psi_1(s,\partial_x)
\Pi_{\taul} \partial_x \left(  \Pi_{\taul} \psi_2(s,\partial_x)v \right) ^2
ds.
\end{equation}
\subsection{Local error analysis}
 Taking the difference between \eqref{exact} and \eqref{numi} we see (by iterating Duhamels formula replacing $\tau$ by $s$ in \eqref{exact}) that the local error 
 $$
 \mathcal{E}(\tau,t_n) =  e^{ \tau  \partial_x^3}\left( \varphi_\tau^\tau(\uta(t_n)) - \Phi^\tau(\uta(t_n))\right)
 $$  takes the form
\begin{equation}\label{loc}
\begin{aligned}
& \mathcal{E}(\tau,t_n)\\
& = 
-   \frac12 \int_0^\tau 
\left[e^{ s \partial_x^3} - \psi_1(s,\partial_x)\right]
\Pi_{\taul} \partial_x \left(  \Pi_{\taul} \psi_2(s,\partial_x) \uta(t_n)\right) ^2
ds\\
&
 - \frac12 \int_0^\tau 
e^{ s \partial_x^3}
\Pi_{\taul} \partial_x \Big(
\left[  \Pi_{\taul} \big( \uta(t_n+s) -\psi_2(s,\partial_x)  \uta(t_n)\big) \right]
  \Pi_{\taul} \left[  \uta(t_n+s) +\psi_2(s,\partial_x)  \uta(t_n) \right]
\Big)
ds\\
%& = 
%-e^{ -\tau  \partial_x^3} \frac12 \int_0^\tau 
%\left[e^{ s \partial_x^3} - \psi_1(s,\partial_x)\right]
%\Pi_{\taul} \partial_x \left(  \Pi_{\taul} \psi_2(s,\partial_x) \uta(t_n)\right) ^2
%ds\\
%&
% -e^{ -\tau  \partial_x^3} \frac12 \int_0^\tau 
%e^{ s \partial_x^3}
%\Pi_{\taul} \partial_x \Big(
%\left[  \Pi_{\taul} \big( e^{s\partial_x^3} -\psi_2(s,\partial_x) \big) \uta(t_n)
%-\frac12\int_0^\tau 
%e^{ - (s-\xi ) \partial_x^3}
%\Pi_{\taul} \partial_x \left(  \Pi_{\taul} \uta(t_n+\xi)\right) ^2
%d\xi\right]  \\
%&\hskip4cm\cdot
%  \Pi_{\taul} \left[  \uta(t_n+s) +\psi_2(s,\partial_x)  \uta(t_n) \right]
%\Big)
%ds\\
& = 
- \frac12 \int_0^\tau 
\left[e^{ s \partial_x^3} - \psi_1(s,\partial_x)\right]
\Pi_{\taul} \partial_x \left(  \Pi_{\taul} \psi_2(s,\partial_x) \uta(t_n)\right) ^2
ds\\
&
 -  \frac12 \int_0^\tau 
e^{ s \partial_x^3}
\Pi_{\taul} \partial_x \Big(
\left[  \Pi_{\taul} \big( e^{s\partial_x^3} -\psi_2(s,\partial_x) \big) \uta(t_n) \right]  
  \Pi_{\taul} \left[  \uta(t_n+s) +\psi_2(s,\partial_x)  \uta(t_n) \right]
\Big)
ds\\
& +  \frac14 \int_0^\tau 
e^{ s \partial_x^3}
\Pi_{\taul} \partial_x \Big(
\left[ \int_0^\tau 
e^{ - (s-\xi ) \partial_x^3}
\Pi_{\taul} \partial_x \left(  \Pi_{\taul} \uta(t_n+\xi)\right) ^2
d\xi\right]  
  \Pi_{\taul} \left[  \uta(t_n+s) +\psi_2(s,\partial_x)  \uta(t_n) \right]
\Big)
ds.
\end{aligned}
\end{equation}

\subsection{Global error analysis}
Let $e^{n+1} = \uta(t_{n+1}) - u^{n+1}$ denote the time discretisation error. By inserting zero in terms of $\pm\Phi^\tau(\uta(t_n))$ we obtain
 $$
\begin{aligned}
e^{n+1} & = \varphi^\tau_\tau(\uta(t_n)) - \Phi^\tau(u^n)\\
& =   \varphi^\tau_\tau(\uta(t_n))- \Phi^\tau(\uta(t_n)) + \Phi^\tau(\uta(t_n))- \Phi^\tau(u^n)\\
& =
\mathrm{e}^{-\tau \partial_x^3}  e^n + \mathcal{J}^\tau(e^n, \uta(t_n) )+ e^{ -\tau  \partial_x^3} \mathcal{E}(\tau,t_n) \\
& =\sum_{\ell =0}^{n }
\mathrm{e}^{-(n-\ell) \tau \partial_x^3} \mathcal{J}^\tau(e^\ell, \uta(t_\ell) )+\sum_{\ell =0}^{n }
\mathrm{e}^{-(n-\ell+1) \tau \partial_x^3}    \mathcal{E}(\tau,t_\ell),
\end{aligned}
$$
where
\begin{equation}\label{J}
\begin{aligned}
&\mathcal{J}^\tau(e^n, \uta(t_n)) := - \frac12 
 \mathrm{e}^{-\tau \partial_x^3}  \int_0^\tau 
\psi_1(s,\partial_x)
\Pi_{\taul} \partial_x \Big[\left(  \Pi_{\taul} \psi_2(s,\partial_x)e^n
\right) \left(  \Pi_{\taul} \psi_2(s,\partial_x)(-e^n+2\uta(t_n)  )
\right) 
\Big]
ds
\end{aligned}
\end{equation}
and  the local error $\mathcal{E}(\tau,t_n)=  e^{ \tau  \partial_x^3}\left( \varphi_\tau^\tau(\uta(t_n)) - \Phi^\tau(\uta(t_n))\right)$ is  given
by
\begin{equation}\label{locErr}
\mathcal{E}(\tau,t_n) =  \sum_{j=1}^3\mathcal{E}^\tau_j(t_n)\end{equation}
with (see  \eqref{loc})
\begin{align}
\label{defE1}
\mathcal{E}^\tau_1(t_n)  & = -  \frac12 \int_0^\tau 
\left[e^{ s \partial_x^3} - \psi_1(s,\partial_x)\right]
\Pi_{\taul} \partial_x \left(  \Pi_{\taul} \psi_2(s,\partial_x) \uta(t_n)\right) ^2
ds
\\
\label{defE2}
\mathcal{E}^\tau_2(t_n)  & = - \frac12 \int_0^\tau 
e^{ s \partial_x^3}
\Pi_{\taul} \partial_x \Big(
\left[  \Pi_{\taul} \big( e^{s\partial_x^3} -\psi_2(s,\partial_x) \big) \uta(t_n) \right]  
  \Pi_{\taul} \left[  \uta(t_n+s) +\psi_2(s,\partial_x)  \uta(t_n) \right]
\Big)
ds
\\
\label{defE3}
\mathcal{E}^\tau_3(t_n)  & =  \frac14 \int_0^\tau 
e^{ s \partial_x^3}
\Pi_{\taul} \partial_x \Big(
\left[ \int_0^\tau 
e^{ - (s-\xi ) \partial_x^3}
\Pi_{\taul} \partial_x \left(  \Pi_{\taul} \uta(t_n+\xi)\right) ^2
d\xi\right]  
  \Pi_{\taul} \left[  \uta(t_n+s) +\psi_2(s,\partial_x)  \uta(t_n) \right]
\Big)
ds.
\end{align}
In order to use global Bourgain spaces, we use again
$\eta$  a smooth and compactly supported function, which is one on $[-1, 1]$ and supported in $[-2, 2]$ 
and we consider  $e^n$  that will solve  for $n \in \mathbb{Z}$ the following fixed point:
\begin{equation}
\label{enmod}
e^{n+1}= \eta(t_{n}) \sum_{\ell =0}^{n }
\mathrm{e}^{-(n-\ell) \tau \partial_x^3} \mathcal{J}^\tau\left(  \eta\left({t_{\ell} \over T_{1}}\right)e^\ell, \eta\left({t_{\ell} \over T_{1}}\right)\uta(t_\ell) \right)+ \eta(t_{n})\sum_{\ell =0}^{n }
\mathrm{e}^{-(n-\ell+1) \tau \partial_x^3}  \eta\left(t_{\ell} \right)    \mathcal{E}(\tau,t_\ell),
\end{equation}
where $\mathcal{J}^\tau$ and $\mathcal{E}$ are now defined by \eqref{J}, \eqref{locErr} with $u_{\tau}$
replaced by a global extension satisfying the estimate \eqref{extutau} and $T_{1}>0$, $T_{1}  \leq 1 \leq  T$
 will be chosen sufficiently small.
 We observe that for $0 \leq n \leq N_{1}$, where $N_{1}=\lfloor {T_{1}\over \tau}\rfloor$, a solution of the above fixed
  point coincides with $u_{\tau}(t_{n})- u^n$.
  
  With these new definitions, we have the following estimate on the global error:
  
  \begin{proposition}
  \label{propglobal}
 There exists $C_{T}>0$ such that for every $\tau \in (0, 1]$, we have the estimate
 $$ \tau^{-1} \| \mathcal{E}(\tau, t_{n}) \|_{Y^0_\tau} \leq C_{T} \tau^{\alpha}, \quad \alpha = \min \left( 1, {s_{0}\over 3}\right).$$
  \end{proposition}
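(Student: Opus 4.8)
The goal is to estimate, in the $Y^0_\tau$ norm, the three pieces $\mathcal{E}^\tau_1, \mathcal{E}^\tau_2, \mathcal{E}^\tau_3$ from \eqref{defE1}--\eqref{defE3}, each of which carries one factor of $\tau$ (the integral over $[0,\tau]$) and must produce an extra $\tau^\alpha$, $\alpha = \min(1, s_0/3)$. The unifying principle is that every dangerous frequency derivative $\partial_x$ appearing in front of a bilinear term is absorbed by the bilinear estimate of Lemma~\ref{bourgaindurd} (the discrete analogue of Lemma~\ref{bourgaindur}), so that no net loss of derivative occurs; what remains is to extract smallness in $\tau$ from the ``defect'' operators $e^{s\partial_x^3}-\psi_1$, $e^{s\partial_x^3}-\psi_2$, and from the inner Duhamel term in $\mathcal{E}^\tau_3$. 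Throughout I would work with the global extension $u_\tau$ satisfying \eqref{extutau}, use Lemma~\ref{lemmadisc-cont} to control $\|u_\tau(t_n+s)\|_{X^{s_0,1/2}_\tau}$ uniformly in $s\in[-4\tau,4\tau]$, and use Corollary~\ref{corbbig} (i.e.\ the bound $\tau^{1/3}\|u_\tau\|_{X^{s_0,1}}\le M_T$) whenever an extra power of $d_\tau(\sigma+k^3)$ needs to be traded for a power of $\tau$.

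\textbf{Step 1: the term $\mathcal{E}^\tau_1$.} Here the key observation is that on the support of $\Pi_\tau$ one has $|k|\le \tau^{-1/3}$, hence $|s k^3|\lesssim 1$ for $s\le\tau$; therefore the symbol of $e^{s\partial_x^3}-\psi_1(s,\partial_x)$ is $O(|sk^3|)=O(\tau |k|^3)$ in the two cases where $\psi_1\ne e^{s\partial_x^3}$ (Lie splitting), and it is identically $0$ in the exponential-integrator and resonance cases. So I would pull out a factor $s\lesssim\tau$ from the defect operator, write $\partial_x(\Pi_\tau\psi_2 u_\tau)^2$ with the help of \eqref{polarbourg}/Lemma~\ref{bourgaindurd}, and bound $\|\cdot\|_{Y^0_\tau}$ by $\tau \cdot\|\langle\partial_x\rangle^3\Pi_\tau\psi_2 u_\tau(t_n)\|_{\cdots}$; the three extra derivatives cost $\tau^{-1}$ via the frequency cutoff, leaving $\tau^{\alpha}$ with $\alpha$ coming from the regularity $s_0$ of $u_\tau$ after using $\|(\,\cdot\,)\|_{X^{0}}\lesssim\tau^{s_0/3}\|(\,\cdot\,)\|_{X^{s_0}}$. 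This is essentially the same bookkeeping as in Corollary~\ref{corbbig}.

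\textbf{Step 2: the term $\mathcal{E}^\tau_2$.} The new feature here is the factor $(e^{s\partial_x^3}-\psi_2(s,\partial_x))u_\tau(t_n)$. For the Lie and exponential-integrator schemes $\psi_2=1$, so this operator is $e^{s\partial_x^3}-1$, whose symbol is $O(|sk^3|)\lesssim \tau|k|^3$ on the cutoff; for the resonance scheme $\psi_2=e^{-s\partial_x^3}$, so the operator is $e^{s\partial_x^3}-e^{-s\partial_x^3}$, again $O(\tau|k|^3)$ on $\{|k|\le\tau^{-1/3}\}$. In every case I extract $\tau$, bound the surviving three derivatives on the first factor by $\tau^{-1}$ via $\Pi_\tau$, apply Lemma~\ref{bourgaindurd} to convert $\partial_x\Pi_\tau(\cdot\,\cdot)$ into a product of $X^{s_0,1/2}_\tau$ and $X^{s_0,1/3}_\tau$ norms, and then invoke Lemma~\ref{lemmadisc-cont} (and \eqref{embdisc1} to pass from $1/3$ to $1/2$ regularity at the cost of $\tau^{-1/6}$, which is harmless against the gained $\tau$) together with $X^{0}_\tau\hookleftarrow \tau^{s_0/3}X^{s_0}_\tau$ to get $\tau^{1+\ldots}\tau^{s_0/3}$, i.e.\ $\tau^\alpha$ after accounting for the $\tau^{-1}$ from the derivatives. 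The factor $u_\tau(t_n+s)+\psi_2 u_\tau(t_n)$ is handled by Lemma~\ref{lemmadisc-cont} as is.

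\textbf{Step 3: the term $\mathcal{E}^\tau_3$.} This is the genuinely quadratic-in-the-error-source term: it contains an inner Duhamel integral $\int_0^\tau e^{-(s-\xi)\partial_x^3}\Pi_\tau\partial_x(\Pi_\tau u_\tau(t_n+\xi))^2 d\xi$ which itself already has size $\tau$ (length of the $\xi$-interval), multiplying an outer $\int_0^\tau\cdots ds$ of size $\tau$ — naively $\tau^2$. The difficulty is that this inner term carries a $\partial_x$ and the outer $\partial_x\Pi_\tau(\cdot\,\cdot)$ carries another. I would treat the inner bracket as a single function $G(s)$, estimate $\|G(s)\|_{X^{0,1/2}_\tau}$ or $\|G(s)\|_{X^{0,1/3}_\tau}$ by Lemma~\ref{bourgaindurd} applied to $\Pi_\tau\partial_x(\Pi_\tau u_\tau(t_n+\xi))^2$ (absorbing the inner derivative), then feed $G$ into the outer bilinear estimate via \eqref{polarbourg}/Lemma~\ref{bourgaindurd} against $u_\tau(t_n+s)+\psi_2 u_\tau(t_n)$. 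The extra power of $\tau$ beyond the one already displayed in Proposition~\ref{propglobal} comes partly from one of the two $\xi$- and $s$-integrations and partly, in the low-regularity regime $s_0<3$, from $\tau^{s_0/3}$; one must be careful that using \eqref{embdisc1} to upgrade the $1/3$-norms to $1/2$-norms costs only $\tau^{-1/6}$, which the two integrations over intervals of length $\tau$ comfortably beat. \textbf{This step, keeping track simultaneously of the two nested derivatives, the two length-$\tau$ integrations, the $b$-regularity upgrades, and the regularity-trade $\tau^{s_0/3}$, is the main obstacle}; the other two terms are comparatively routine once the frequency-cutoff/derivative bookkeeping of Step~1 is set up. Finally, summing the three contributions gives $\|\mathcal{E}(\tau,t_n)\|_{Y^0_\tau}\lesssim_T \tau^{1+\alpha}$, i.e.\ the claimed $\tau^{-1}\|\mathcal{E}(\tau,t_n)\|_{Y^0_\tau}\le C_T\tau^\alpha$.
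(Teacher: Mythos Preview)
Your overall strategy for $\mathcal{E}^\tau_1$ and $\mathcal{E}^\tau_2$ matches the paper's, but the bookkeeping for how $\tau^\alpha$ emerges is garbled. The inequality you write, ``$\|(\cdot)\|_{X^0}\lesssim\tau^{s_0/3}\|(\cdot)\|_{X^{s_0}}$'', is false in general (it goes the other way, without the $\tau$ factor). The correct mechanism, which the paper encodes in \eqref{gainerreur}, is an interpolation on the \emph{symbol}: on the range of $\Pi_\tau$ one has $|e^{isk^3}-\psi_i|\lesssim\min(1,\tau|k|^3)\lesssim(\tau|k|^3)^{\min(1,s_0/3)}=\tau^\alpha|k|^{3\alpha}$, and the factor $|k|^{3\alpha}\le\langle k\rangle^{s_0}$ is absorbed by passing from the $Y^{s_0}_\tau$ norm to $Y^0_\tau$. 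Your version ``pull out $\tau$, three derivatives cost $\tau^{-1}$, then gain $\tau^{s_0/3}$ from regularity'' double-counts and does not close.

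The real gap is in Step~3. You propose to estimate the inner bracket $G(s)=\int_0^\tau e^{-(s-\xi)\partial_x^3}\Pi_\tau\partial_x(\Pi_\tau u_\tau(t_n+\xi))^2\,d\xi$ in $X^{0,1/2}_\tau$ \emph{via Lemma~\ref{bourgaindurd}}. But that lemma controls $\partial_x\Pi_\tau(\Pi_\tau u\,\Pi_\tau v)$ only in $Y^0_\tau$ (morally $X^{0,-1/2}_\tau$), not in $X^{0,1/2}_\tau$; upgrading from $Y^0_\tau$ to $X^{0,1/2}_\tau$ by \eqref{embdisc1} costs a full $\tau^{-1}$, which exactly cancels the $\tau$ from the $\xi$-integral, and you are left with no smallness at all. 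The paper does \emph{not} absorb the inner $\partial_x$ by bilinear smoothing. Instead it (i) applies Lemma~\ref{bourgaindurd} only to the outer product, (ii) handles the inner $\partial_x$ crudely through $\Pi_\tau$ (paying $\tau^{-(1-s_0)/3}$ when $s_0\le 1$), (iii) bounds the inner \emph{product} $(\Pi_\tau u_\tau)^2$ in $X^{s_0,0}_\tau=l^2_\tau H^{s_0}$ by the discrete Strichartz estimate \eqref{discStrich}, and (iv) pays $\tau^{-1/2}$ via \eqref{embdisc1} to reach $X^{s_0,1/2}_\tau$. The net balance $\tau\cdot\tau^{-(1-s_0)/3}\cdot\tau^{-1/2}=\tau^{s_0/3+1/6}$ (for $s_0\le 1$) then gives the claim. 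For $s_0>3/2$ the paper abandons the Bourgain machinery entirely and uses the Sobolev embedding $H^{s_0}\subset W^{1,\infty}$ with a direct $l^2_\tau L^2$ estimate; your plan does not account for this case split either.
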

  
  \begin{proof}
  Thanks to \eqref{locErr}, we estimate each of the $\mathcal{E}_{j}^\tau(t_{n})$.
  
  For  $\mathcal{E}_{1}^\tau(t_{n})$, 
   since $\e^{s \partial_{x}^3}- \psi_{i}$, $i=1, \, 2$ and $\Pi_{\taul}$ are Fourier multipliers
   in the space variable, and since  $\Pi_{\taul}$ projects on frequencies less than $\tau^{- {1 \over 3}},$ we observe that
  for any function $(F(t_{n}))_{}$, we have by Taylor expansion that 
  \begin{equation}
  \label{gainerreur} \sup_{s \in [- \tau, \tau]}\| \left(\e^{s\partial_{x}^3}-\psi_{i}(s, \partial_{x})\right) \Pi_{\tau} F(t_{n}) \|_{Y^{0}_{\tau}}
   \lesssim \tau^{\alpha}  \| F(t_{n}) \|_{Y^{s_{0}}_{\tau}}.
   \end{equation}
  
   Therefore, we get that
  $$ \tau^{-1} \| \mathcal{E}^\tau_{1}(t_{n}) \|_{Y^0_\tau} 
   \leq \tau^{\alpha}\sup_{s \in [0, \tau]} \|\Pi_{\tau}  \partial_x \left(  \Pi_{\taul} \psi_2(s,\partial_x) \uta(t_n)\right) ^2\|_{Y^{s_{0}}_{\tau}}.$$
Then by using {Lemma \ref{bourgaindurd} }and the fact that $\Pi_{\tau}$, $\psi_{2}$ are bounded  Fourier multiplier (in space), 
we get that
$$ \tau^{-1} \| \mathcal{E}^\tau_{1}( t_{n}) \|_{Y^0_\tau}  \lesssim  \tau^{\alpha}\|u_{\tau}\|_{X^{s_{0}, {1 \over 2}}_{\tau}}^2.$$
By using Lemma \ref{lemmadisc-cont}, we finally get that
$$  \tau^{-1} \| \mathcal{E}^\tau_{1}( t_{n}) \|_{Y^0_\tau} \leq C_{T} \tau^{\alpha}.$$

 For $\mathcal{E}^\tau_{2}$ defined in \eqref{defE2}, by using  again Lemma \ref{bourgaindurd} we get that
 $$  \tau^{-1} \| \mathcal{E}^\tau_{2}(t_{n}) \|_{Y^0_\tau} 
  \leq \sup_{s \in [0, \tau]}\left\|   
   \Pi_{\taul} \big( e^{s\partial_x^3} -\psi_2(s,\partial_x) \big) \uta(t_n) \right\|_{X^{0, {1 \over 2}}_{\tau}} \left\|
  \Pi_{\taul} \left[  \uta(t_n+s) +\psi_2(s,\partial_x)  \uta(t_n) \right] \right\|_{X^{0, {1 \over 2}}_{\tau}}.$$
  Consequently, by using again  \eqref{gainerreur} and Lemma \ref{lemmadisc-cont}, we find again that
  $$  \tau^{-1} \| \mathcal{E}^\tau_{2}(t_{n}) \|_{Y^0_\tau}  \lesssim C_{T} \tau^{\alpha}.$$
 In a similar way, for $\mathcal{E}_{3}^\tau$ defined in \eqref{defE3}, we first use Lemma \ref{bourgaindurd}
  to get that
  \begin{multline}
  \label{debutE2}
   \tau^{-1} \| \mathcal{E}^\tau_{3}(t_{n}) \|_{Y^0_\tau} 
  \\ \lesssim  \sup_{s \in [0, \tau]} \left(\left\|   
\left[ \int_0^\tau 
e^{ - (s-\xi ) \partial_x^3}
\Pi_{\taul} \partial_x \left(  \Pi_{\taul} \uta(t_n+\xi)\right) ^2
d\xi\right]  \right\|_{X^{0, {1 \over 2}}_{\tau}}  \left\|
  \Pi_{\taul} \left[  \uta(t_n+s) +\psi_2(s,\partial_x)  \uta(t_n) \right] \right\|_{X^{0, {1 \over 2}}_{\tau}} \right).
  \end{multline}
  By using again the property of $\Pi_{\tau}$, we first write that  in the case $s_{0}\leq 1$, we have
  $$ \tau^{-1} \| \mathcal{E}^\tau_{3}(t_{n}) \|_{Y^0_\tau}
  \leq \tau   \tau^{- {1 - s_{0} \over 3}} \sup_{s, \, \xi \in [0, \tau]}  \|\left(\Pi_{\taul} \uta(t_n+\xi)\right) ^2\|_{X^{s_{0}, {1 \over 2}}_{\tau}}
   \left\|
  \Pi_{\taul} \left[  \uta(t_n+s) +\psi_2(s,\partial_x)  \uta(t_n) \right] \right\|_{X^{0, {1 \over 2}}_{\tau}} .$$
  Next, thanks to the property \eqref{embdisc1} of discrete Bourgain spaces, we get that
  $$ \tau^{-1} \| \mathcal{E}^\tau_{3}(t_{n}) \|_{Y^0_\tau}
  \leq \tau^{1 \over 2}  \tau^{- {1 - s_{0} \over 3}} \sup_{s, \, \xi \in [0, \tau]}  \|\left(\Pi_{\taul} \uta(t_n+\xi)\right) ^2\|_{X^{s_{0}, 0}_{\tau}}
  \left\|
  \Pi_{\taul} \left[  \uta(t_n+s) +\psi_2(s,\partial_x)  \uta(t_n) \right] \right\|_{X^{0, {1 \over 2}}_{\tau}}.  $$
  Next, by using the discrete Strichartz estimate \eqref{discStrich}, we get that
  $$  \|\left(\Pi_{\taul} \uta(t_n+\xi)\right) ^2\|_{X^{s_{0}, 0}_{\tau}} \lesssim \| \uta(t_n+\xi)\|_{X^{s_{0}, {1 \over 2}}_{\tau}}^2.$$ 
  Consequently, by using again Lemma \ref{lemmadisc-cont}, we get that
  $$  \tau^{-1} \| \mathcal{E}^\tau_{3}(t_{n}) \|_{Y^0_\tau} \leq C_{T} \tau^{{s_{0} \over 3} + {1 \over 6}}.$$
If $s_{0} \geq 1$, we can directly use  \eqref{debutE2},  \eqref{embdisc1} and   \eqref{discStrich} to get that
$$  \tau^{-1} \| \mathcal{E}^\tau_{3}(t_{n}) \|_{Y^0_\tau}  \leq \tau^{1 \over 2} \sup_{s, \, \xi \in [0, \tau]} 
 \| \partial_{x}\uta(t_n+\xi)\|_{X^{0, {1 \over 2}}_{\tau}}  \| \uta(t_n+\xi)\|_{X^{0, {1 \over 2}}_{\tau}}
  (\| \uta(t_n+s)\|_{X^{0, {1 \over 2}}_{\tau}} +  \| \uta(t_n)\|_{X^{0, {1 \over 2}}_{\tau}})$$
  and therefore
  $$  \tau^{-1} \| \mathcal{E}^\tau_{3}(t_{n}) \|_{Y^0_\tau}  \leq  C_{T}\tau^{1 \over 2}.$$
   We thus have obtained that  
  $$  \tau^{-1} \| \mathcal{E}^\tau_{3}(t_{n}) \|_{Y^0_\tau}  \leq C_T \tau^{ {s_{0}\over 3}}$$
  if $s_{0} \leq 3/2.$
  
  If $s_{0}>3/2$, since $H^{s_{0}} \subset W^{1, \infty}$, we easily get directly from \eqref{defE3} that
  \begin{multline*}
   \tau^{-1} \| \mathcal{E}^\tau_{3}(t_{n}) \|_{Y^0_\tau} \leq \tau^{-1}\| \mathcal{E}^\tau_{3}(t_{n}) \|_{X^{0, 0}_{\tau}} \\
   \lesssim \tau  \sup_{\xi, \, s \in [0, \tau]} \| \Pi_{\tau} \partial_{xx} u(t_{n} + \xi ) \|_{l^2L^2} \|\partial_{x} u(t_{n} + \xi ) \|_{l^\infty  L^\infty}
   ( \|u(t_{n}+ s) \|_{l^\infty L^\infty} + \|u(t_{n})\|_{l^\infty L^\infty}).
   \end{multline*}
   This yields by using the property of $\Pi_{\tau}$ that for $s_{0} \leq 2$, we have
 $$ \tau^{-1} \| \mathcal{E}^\tau_{3}(t_{n}) \|_{Y^0_\tau} \leq  \tau^{ 1 - { 2 - s_{0} \over 3}} C_{T} \lesssim \tau^{ { 1 \over 3} + { s_{0} \over 3}}C_{T}$$
 while
 $$  \tau^{-1} \| \mathcal{E}^\tau_{3}(t_{n}) \|_{Y^0_\tau} \leq \tau C_{T}$$
 if $s_{0} \geq 2$.
 
 Summarizing all the cases, we obtain
   that
 $$  \tau^{-1} \| \mathcal{E}^\tau_{3}(t_{n}) \|_{Y^0_\tau}  \leq C_T \tau^{\alpha}.$$
 This ends the proof.
 \end{proof}

 \section{Proof of Theorem \ref{maintheo}}
 \label{sectionproofmain}
 
We are now in a position to give the proof of Theorem \ref{maintheo}.
We first observe that thanks to Proposition \ref{corNLSK}, we have from the triangle inequality that
\begin{equation}
\label{erreurtotale}
\|u(t_{n})- u^n\|_{L^2} \leq  \|u(t_{n}) - u_{\tau}(t_n)\|_{L^2} + \|u_{\tau}(t_{n}) - u^n\|_{L^2} \leq C_{T}\tau^{s_{0}   \over 3} + 
 \|u_{\tau}(t_{n}) - u^n\|_{L^2}.
\end{equation}
 To get the error estimates of Theorem \ref{maintheo} for $t_{n} \leq T_{1}$, it thus suffices to estimate $ \|e^n\|_{X^0_\tau}$ 
   thanks to \eqref{sob1/2} where $e^n$ solves the fixed point \eqref{enmod}.
  By using \eqref{bourg1/2}, we get that
 $$ \|e^n \|_{X^0_\tau}  \leq  \tau^{-1} \left\|   \mathcal{J}^\tau\left(  \eta\left({t_{\ell} \over T_{1}}\right)e^\ell, \eta\left({t_{\ell} \over T_{1}}\right)\uta(t_\ell) \right)\right\|_{Y^0_\tau}
  + \tau^{-1} \| \mathcal{E}(\tau, t_{n}) \|_{Y^0_\tau}$$
  and hence, thanks to Proposition \ref{propglobal} we have that
  $$  \|e^n \|_{X^0_\tau}  \leq  \tau^{-1}\left\|  \mathcal{J}^\tau\left(  \eta\left({t_{n }\over T_{1}}\right)e^n, \eta\left({t_{n} \over T_{1}}\right)\uta(t_n \right) \right\|_{Y^0_\tau}
  +  C_{T} \tau^{\alpha},$$
  where $\alpha = \min (1, s_{0}/3).$
 Next we estimate $ \tau^{-1}\left\|  \mathcal{J}^\tau\left(  \eta\left({t_{n} \over T_{1}}\right)e^n, \eta\left({t_{n} \over T_{1}}\right)\uta(t_n) \right)\right\|_{Y^0_\tau}$.
  From  the expression \eqref{J}, we get by using Lemma \ref{bourgaindurd} (and the fact that $\psi_{1}$, $\psi_{2}$
   are bounded Fourier multipliers) that
   \begin{multline*}
   \tau^{-1}\left\|  \mathcal{J}^\tau\left(  \eta\left({t_{n} \over T_{1}}\right)e^n, \eta\left({t_{n} \over T_{1}}\right)\uta(t_n )\right) \right\|_{Y^0_\tau}
   \lesssim   \left\| \eta\left({t_{n} \over T_{1}}\right)e^n \right\|_{X^{0, {1 \over 2}}_{\tau}}
    \left\| \eta\left({t_{n }\over T_{1}}\right) u_{\tau}(t_{n}) \right\|_{X^{0, {1 \over 3}}_{\tau}}
     \\ +   \left\| \eta\left({t_{n }\over T_{1}}\right)e^n \right\|_{X^{0, {1 \over 3}}_{\tau}}
    \left\| \eta\left({t_{n} \over T_{1}}\right) u_{\tau}(t_{n}) \right\|_{X^{0, {1 \over 2}}_{\tau}}
     +   \left\| \eta\left({t_{n} \over T_{1}}\right)e^n \right\|_{X^{0, {1 \over 2}}_{\tau}}
    \left\| \eta\left({t_{n} \over T_{1}}\right)  e^n \right\|_{X^{0, {1 \over 3}}_{\tau}}. 
   \end{multline*}
   Consequently, by using \eqref{bourg3} and Lemma \ref{lemmadisc-cont}, we get that
  $$  \tau^{-1}\left\|  \mathcal{J}^\tau\left(  \eta\left({t_{n} \over T_{1}}\right)e^n, \eta\left({t_{n} \over T_{1}}\right)\uta(t_n \right) \right\|_{Y^0_\tau}
   \leq C_{T} T_{1}^\epsilon \|e^n\|_{X^0_{\tau}} + C_{T} T_{1}^\epsilon \|e^n \|_{X^0_{\tau}}^2$$
   for some $C_{T}>0$ independent of $\tau \in (0, 1]$ and $T_{1}\in (0, 1]$.
   This yields 
   $$  \|e^n \|_{X^0_\tau}  \leq C_{T} \left(  \tau^{\alpha} + T_{1}^\epsilon \|e^n\|_{X^0_{\tau}}  + T_{1}^\epsilon \|e^n \|_{X^0_{\tau}}^2\right).$$
   We thus get for $T_{1}$ and $\tau$  sufficiently small that
   $$ \|e^n \|_{X^0_\tau}  \leq C_{T}  \tau^{\alpha}.$$
This proves the desired estimate \eqref{final1} for $ 0 \leq n \leq N_{1}= T_{1}/ \tau$. We can then iterate in a classical way the argument on  $ T_{1}/ \tau \leq  n \leq 2 T_{1}/\tau$ and so on to get the final estimate for $ 0 \leq n \leq T/\tau$.

\section{Proof of Lemma \ref{bourgaindurd}}
\label{sectiontechnical}
In this section, we shall prove Lemma \ref{bourgaindurd}. We adapt the proof in \cite{Bour93b}, \cite{KPV}, the main difficulty is to check that because of the frequency localization
 induced by the filter $\Pi_{\tau}$, the favorable frequency interaction of the KdV equation is kept at the discrete level.

The first step is to prove the following Strichartz estimate which has is own interest.
\begin{lemma}
\label{lemdiscStrich}
There exists $C>0$ such that  for every $u^{n} \in X^{0, {1 \over 3}}_{\tau}$, and $\tau \in (0, 1]$, we have
the estimate
$$ \left \| \Pi_{\tau} u^{n}\right\|_{l^4_{\tau}L^4}\leq C \|u^{n}\|_{X^{0, {1 \over 3}}_{\tau}}.$$
\end{lemma}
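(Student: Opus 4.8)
The goal is a discrete $L^4_\tau L^4$ Strichartz estimate for frequency-localized sequences, mirroring the continuous estimate $\|u\|_{L^4(\mathbb R\times\mathbb T)}\lesssim\|u\|_{X^{0,1/3}}$ used by Bourgain and Kenig--Ponce--Vega. The plan is to reduce to a bilinear/$L^2$ statement via duality or by squaring: $\|\Pi_\tau u^n\|_{l^4_\tau L^4}^2 = \|(\Pi_\tau u^n)^2\|_{l^2_\tau L^2}$, and then to estimate $\|(\Pi_\tau u^n)^2\|_{l^2_\tau L^2}$ by $\|u^n\|_{X^{0,1/3}_\tau}^2$. Writing $f^n = e^{n\tau\partial_x^3}u^n$ so that $\|u^n\|_{X^{0,b}_\tau} = \|f^n\|_{H^b_\tau L^2}$, I would first do a Littlewood--Paley decomposition $u^n = \sum_{N} u^n_N$ into dyadic spatial frequency blocks $|k|\sim N$; since $\Pi_\tau$ truncates at $|k|\le\tau^{-1/3}$, only blocks with $N\lesssim\tau^{-1/3}$ survive. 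The square $(\Pi_\tau u^n)^2$ then decomposes into products $u^n_{N_1}u^n_{N_2}$, and by symmetry one may assume $N_1\le N_2$; the output frequency is then $\lesssim N_2$.

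**Key steps.**
First, reduce to estimating $\|\widetilde{(u^n_{N_1}u^n_{N_2})}\|_{L^2 l^2}$ where the tilde is the discrete space-time Fourier transform; this is a convolution in $(\sigma,k)$ of the two factors' transforms. Second, use the key dispersive gain: on the torus for KdV, in the region $|k_1|\le|k_2|$ with $|k_1|\sim N_1$, the map $(\sigma_1,\sigma_2)\mapsto$ (output time-frequency) together with the resonance function $k^3 - k_1^3 - k_2^3 = -3kk_1k_2$ (when $k=k_1+k_2$) gives a Jacobian lower bound $\gtrsim N_1 N_2^2 \sim N_1 N \cdot N$; crucially $d_\tau(\sigma)\sim|\sigma|$ for $|\tau\sigma|\le\pi$, and one must check that the modulation variables $d_\tau(\sigma_j + k_j^3)$ stay in the regime where this equivalence holds (this is where the filter $|k|\le\tau^{-1/3}$ is used: it forces $|k_j^3|\lesssim\tau^{-1}$, so the relevant frequencies are not wrapped around the $2\pi/\tau$-periodicity of $d_\tau$ in a harmful way). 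Third, apply Cauchy--Schwarz in the convolution, integrating out one modulation variable against the Jacobian bound, picking up the characteristic $\langle d_\tau(\sigma_j+k_j^3)\rangle^{1/3}$ weights; the exponent $1/3$ is exactly what is needed so that $\int \langle\sigma\rangle^{-2/3}\,d\sigma$ over the relevant interval, combined with the $N_1 N_2^2$ Jacobian, balances. Fourth, sum the resulting dyadic estimates in $N_1\le N_2$: the off-diagonal decay from the $N_1$-dependence of the Jacobian gain gives a geometric series, so the sum converges with a uniform constant.

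**Main obstacle.**
The delicate point is verifying that the continuous KdV Strichartz/bilinear argument survives the passage to the discrete dispersion relation $d_\tau(\sigma+k^3) = \tau^{-1}(e^{i\tau(\sigma+k^3)}-1)$, which is $2\pi/\tau$-periodic rather than linear. One has to confirm that, after the filtering $|k|\le\tau^{-1/3}$, the phase $\tau(k^3-k_1^3-k_2^3) = -3\tau k k_1 k_2$ is $O(1)$ (since each $|k_j|\lesssim\tau^{-1/3}$ gives $|\tau k k_1 k_2|\lesssim 1$), so that $d_\tau$ behaves like the identity on the resonance function and the nonstationary-phase/change-of-variables estimate goes through with $\tau$-uniform constants; the aliasing contributions (the $m\ne 0$ terms in the Poisson-summation picture, as handled in the proof of Lemma~\ref{lemmadisc-cont}) must be shown to be harmless because the relevant Fourier supports are confined to $|\sigma+k^3|\lesssim\tau^{-1}$. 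I would isolate this in a lemma on the resonance function and otherwise follow the Bourgain--KPV computation almost verbatim, with $\langle\sigma\rangle$ replaced by $\langle d_\tau(\sigma)\rangle$ throughout.
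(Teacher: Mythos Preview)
Your overall strategy---square to reduce to an $l^2_\tau L^2$ bilinear estimate, exploit the resonance identity $k^3-k_1^3-k_2^3=-3kk_1k_2$, and check that the filter $|k|\le\tau^{-1/3}$ tames the $2\pi/\tau$-periodicity of $d_\tau$---is right, and your discussion of the discrete obstacle matches the paper's treatment precisely. But the decomposition you propose is the wrong one, and the argument will not close as written.

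You decompose in \emph{spatial} frequency $|k|\sim N$ and claim that the bilinear pieces $\|u^n_{N_1}u^n_{N_2}\|_{l^2_\tau L^2}$ carry off-diagonal decay in $N_1/N_2$ coming from a ``Jacobian gain''. Two problems. First, on the torus the frequency variable is discrete, so there is no change of variables in $k_1$ and no Jacobian; the correct substitute is a lattice-point count. Second, and more seriously, there is \emph{no} off-diagonal decay at the endpoint $b=1/3$: take $u_{N_j}^n=\eta(n\tau)e^{-in\tau N_j^3}e^{iN_jx}$ (a single plane wave at each scale), for which $\|u_{N_1}^nu_{N_2}^n\|_{l^2_\tau L^2}\sim 1\sim\|u_{N_1}^n\|_{X^{0,1/3}_\tau}\|u_{N_2}^n\|_{X^{0,1/3}_\tau}$ with no $N_1/N_2$ factor. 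Consequently the double sum after the triangle inequality is an $l^1$ sum of the dyadic pieces, not the $l^2$ sum you need to recover $\|u^n\|_{X^{0,1/3}_\tau}^2$.

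The paper instead performs a Littlewood--Paley decomposition in the \emph{modulation} variable, writing $\Pi_\tau u^n=\sum_m u^n_m$ with $\langle d_\tau(\sigma+k^3)\rangle\sim 2^m$. For the product $u^n_p u^n_{p+q}$ one splits the output frequency into $|k|\le 2^\beta$ (handled by Young) and $|k|>2^\beta$ (handled by Cauchy--Schwarz plus a count of the $k'$ with $(k'-k/2)^2$ in a prescribed interval of length $\lesssim 2^{p+q-\beta}$). Optimizing $\beta=(p+q)/3$ yields $\|u^n_p u^n_{p+q}\|_{l^2_\tau L^2}\lesssim 2^{(4p+q)/6}\|u^n_p\|_{l^2_\tau L^2}\|u^n_{p+q}\|_{l^2_\tau L^2}$, and it is the resulting $2^{-q/6}$ factor in the \emph{modulation} indices---not the spatial ones---that makes the sum geometric. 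If you redo your outline with the modulation decomposition in place of the spatial one, and replace the Jacobian step by this counting argument, you will recover the paper's proof.
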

\begin{proof}
We first use a Littlewood-Paley type decomposition, 
we write
$$ \mathrm{1}_{[- {\pi \over \tau}, {\pi \over \tau})}(\sigma)= \sum_{m \geq 0} \mathrm{1}_{m}(\sigma),$$
where $ \mathrm{1}_{m}$ is supported in $2^{m} \leq  1 + | \sigma| <2^{m+1}\cap [- {\pi \over \tau}, {\pi \over \tau})$
(the sum is actually finite). Next, we extend $ \mathrm{1}_{m}$ on $\mathbb{R}$ by $2\pi/\tau$ periodicity
 so that
 $$ 1=  \sum_{m \geq 0} \mathrm{1}_{m}(\sigma).$$
 By using this decomposition, we expand
$$ \Pi_{\tau}u^n= \sum_{m  \geq 0} u^n_{m},$$
where 
$$\widetilde{u^n_{m}}(\sigma, k) = \widetilde{ u^n}(\sigma, k) \mathds{1}_{m}(\sigma, k)$$
and we have set
\begin{equation}
\label{def1m}  \mathds{1}_{m}(\sigma, k) = 
 \mathrm{1}_{m}(\sigma +  k^3) \mathrm{1}_{\mathcal{\tau}^{1 \over 3}|k|\leq 1} (k).
 \end{equation}
 Note that by our definition $\mathds{1}_{m}(\cdot, k)$ is $2\pi/\tau$ periodic.

   We then write
 $$ \| \Pi_{\tau} u^n \|_{l^4_{\tau}L^4}^2 = \| (\Pi_{\tau} u^n)^2 \|_{l^2_{\tau}L^2}
  \leq  2 \sum_{p \geq 0, \, q\geq 0 } \| u^n_{p}  u^n_{p+q}\|_{l^2_{\tau}L^2}$$
  and hence from the Bessel identity we have that
 \begin{equation}
 \label{sumfin}  \| \Pi_{\tau} u^n \|_{l^4_{\tau}L^4}^2  \leq  2 \sum_{p \geq 0, \, q\geq 0 }
  \| \widetilde{u^n_{p}} * \widetilde{u^n_{p+q}} (\sigma, k) \|_{L^2l^2},
  \end{equation}
  where 
  $$\widetilde{u^n_{p}} * \widetilde{u^n_{p+q}} (\sigma, k)=
   \sum_{k'} \int_{\sigma'} \widetilde{u^n_{p}} (\sigma', k')  \widetilde{u^n_{p+q}}(\sigma - \sigma', k-k') d\sigma'.$$
We thus need to estimate $ \| \widetilde{u^n_{p}} * \widetilde{u^n_{p+q}} (\sigma, k) \|_{L^2l^2}$.
We shall handle differently  the $l^2$ norm for $|k| \leq 2^\beta$ and $|k | \geq 2^\beta$ for $\beta$ to be chosen.

 For $|k| \leq 2^\beta$, we write
 $$  \| \widetilde{u^n_{p}} * \widetilde{u^n_{p+q}} (\sigma, k) \|_{L^2(\sigma)}
  \leq  \sum_{k'}  \left\| \int_{\sigma'}\widetilde{u^n_{p}} (\sigma', k')  \widetilde{u^n_{p+q}}(\sigma - \sigma', k-k') d \sigma' \right \|_{L^2(\sigma)} $$
  and we use the Young inequality for convolution to obtain
  $$   \| \widetilde{u^n_{p}} * \widetilde{u^n_{p+q}} (\sigma, k) \|_{L^2(\sigma)} \leq
  \sum_{k'}   \int_{\sigma'} \left\| \widetilde{u^n_{p}}(\cdot, k')\right\|_{L^1(\sigma)} \left\| \widetilde{u^n_{p+q}}(\cdot, k-k') \right\|_{L^2(\sigma)}.$$
  By the frequency localization of  $\widetilde{u^n_{p}}$, we have by Cauchy-Schwarz that
  $$ \left\| \widetilde{u^n_{p}}(\cdot, k')\right\|_{L^1(\sigma)}  \lesssim  2^{p \over 2} \left\| \widetilde{u^n_{p}}(\sigma, k')\right\|_{L^2(\sigma)}.$$ 
  Therefore we obtain by using also Cauchy-Schwarz for the sum in $k_{1}$ that
  $$  \| \widetilde{u^n_{p}} * \widetilde{u^n_{p+q}} (\sigma, k) \|_{L^2(\sigma)} \lesssim 
  2^{p \over 2}   \left\| \widetilde{u^n_{p}}\right\|_{L^2(\sigma)l^2}  \left\| \widetilde{u^n_{p+q}} \right\|_{L^2(\sigma)l^2}.$$
  This yields
\begin{equation}
\label{kpetit*}
 \| \widetilde{u^n_{p}} * \widetilde{u^n_{p+q}} \|_{L^2(\sigma)l^2(|k| \leq 2^\beta)}
  \lesssim 2^{ \beta + p \over 2}  \left\| \widetilde{u^n_{p}}\right\|_{L^2(\sigma)l^2}  \left\| \widetilde{u^n_{p+q}} \right\|_{L^2(\sigma)l^2}.
\end{equation}
For $|k| \geq 2^\beta$, we write that 
$$ \| \widetilde{u^n_{p}} * \widetilde{u^n_{p+q}} \|_{L^2(\sigma)l^2(|k| \geq 2^\beta)}
=\left\| \sum_{k'} \int_{\sigma'} \widetilde{u^n_{p}} (\sigma', k')  \widetilde{u^n_{p+q}}(\sigma - \sigma', k-k') 
 \mathds{1}_{p}(\sigma', k') \mathds{1}_{p+q}(\sigma - \sigma', k-k') d \sigma' \right \|_{L^2(\sigma) l^2(|k| \geq 2^\beta)}
$$
and we get from Cauchy-Schwarz that
$$  \| \widetilde{u^n_{p}} * \widetilde{u^n_{p+q}} \|_{L^2(\sigma)l^2(|k| \geq 2^\beta)} \leq 
 \left\|\left( \sum_{k'} \int_{\sigma'} |\widetilde{u^n_{p}} (\sigma', k')|^2 |  \widetilde{u^n_{p+q}}(\sigma - \sigma', k-k') |^2d \sigma'
 \right)^{ 1 \over 2} \left( \mathds{1}_{p}* 1_{p+q}(\sigma, k)\right)^{1 \over 2} \right\|_{L^2(\sigma) l^2(|k| \geq 2^\beta)}.$$
 This yields
 $$  \| \widetilde{u^n_{p}} * \widetilde{u^n_{p+q}} \|_{L^2(\sigma)l^2(|k| \geq 2^\beta)} \leq 
 \left\| (\mathds{1}_{p}* \mathds{1}_{p+q})^{1 \over 2}\right\|_{L^\infty( \sigma, |k| \geq 2^\beta)}
  \left\| \widetilde{u^n_{p}}\right\|_{L^2(\sigma)l^2}  \left\| \widetilde{u^n_{p+q}} \right\|_{L^2(\sigma)l^2}.$$
   We thus need to estimate  $\left\| (\mathds{1}_{p}* \mathds{1}_{p+q})^{1 \over 2}\right\|_{L^\infty( \sigma, |k| \geq 2^\beta)}$
   where 
   $$\mathds{1}_{p}* \mathds{1}_{p+q}(\sigma, k)
    = \sum_{k'}\int_{\sigma' \in [-\pi/\tau, \pi/\tau]} \mathds{1}_{p}(\sigma', k') \mathds{1}_{p+q}(\sigma - \sigma', k-k') d \sigma' .$$
From the definitions of $\mathds{1}_{m}$, we have a non-zero integral
 if $ |\sigma'  + k'^3  - 2{m_{1} \pi \over \tau} | \leq 2^{p+1}$ and $  | \sigma - \sigma'  + (k- k)'^3  - 2{m_{2} \pi \over \tau} | \leq 2^{p+ q+1}$
 for some $m_{1}$, $m_{2 } \in \mathbb{Z}$
  which means that 
 $\sigma' +  k'^3 \in  E_{p}$ and  $ \sigma - \sigma'  + (k- k)'^3 \in E_{p+q}$
 where we have set  $E_{l} =\cup_{|m| \leq N \in \mathbb{Z}}\left[ - 2^{l+1} + {2 m \pi \over \tau},  2^{l+1} +{2 m \pi \over \tau}\right] $. 
 Note that since $|k|, \, |k'| \leq \tau^{-{1 \over 3}}$ the number of intervals in $E_{p}$  and $E_{p+q}$ yielding
 a nontrivial contribution is $\mathcal{O}(1)$, we can thus  take $N= \mathcal{O}(1)$  independent of $\tau$.
For a given $k'$, we observe that  if the integral is not zero then it is bounded  by $ \mathcal{O}(2^{p})$.
 Moreover, to evaluate the number of non zero terms in the sum, we see that, we must have 
 $$ \sigma + k'^3  + (k-k')^3 \in E_{p+q} + E_{p}$$ which is equivalent to 
 $$  k'^2 -  k k' \in { 1 \over 3}\left( - { \sigma \over   k} - k^2 +  2^{-\beta} ( E_{p+q} + E_{p})\right)$$
 since $|k| \geq 2^\beta$. This yields
 $$   \left(k' - {k \over 2} \right)^2 \in  - {k^2 \over 4}  +  { 1 \over 3}\left( - { \sigma \over   k} - k^2 +  2^{-\beta} ( E_{p+q} + E_{p})\right)$$
 which means that $k'$ must be restricted to a finite number $N$ of intervals of length smaller than
  $ \mathcal{O}(2^{ p+q  -  \beta \over 2})$.  We thus find that
  $$  \left\| (\mathds{1}_{p}* \mathds{1}_{p+q})^{1 \over 2}\right\|_{L^\infty( \sigma, |k| \geq 2^\beta)}
   \lesssim 2^{p \over 2} 2^{ p+q  -  \beta \over 4} $$
   and hence 
  $$ \| \widetilde{u^n_{p}} * \widetilde{u^n_{p+q}} \|_{L^2(\sigma)l^2(|k| \geq 2^\beta)} \lesssim   2^{  3p+q - \beta \over 4}
   \left\| \widetilde{u^n_{p}}\right\|_{L^2(\sigma)l^2}  \left\| \widetilde{u^n_{p+q}} \right\|_{L^2(\sigma)l^2}.$$
Thanks to the last estimate and \eqref{kpetit*}, we can then optimize the choice of $\beta$.
 We take $\beta = {p + q \over 3}$ and we deduce that
 $$   \| \widetilde{u^n_{p}} * \widetilde{u^n_{p+q}} \|_{L^2(\sigma)l^2(k)} \lesssim 
 2^{ 4p + q \over 6} \left\| \widetilde{u^n_{p}}\right\|_{L^2(\sigma)l^2}  \left\| \widetilde{u^n_{p+q}} \right\|_{L^2(\sigma)l^2}.
 $$
 We then get from \eqref{sumfin} that
 $$ \| \Pi_{\tau} u^n \|_{l^4_{\tau}L^4}^2  \lesssim 
  \sum_{q \geq 0}  2^{-q \over 6} \sum_{p \geq 0} 2^{p \over 3}   \left\| \widetilde{u^n_{p}}\right\|_{L^2(\sigma)l^2}
   2^{(p+q) \over 3}  \left\| \widetilde{u^n_{p+q}} \right\|_{L^2(\sigma)l^2}.$$
    We finally conclude by using Cauchy-Schwarz for the sum in $q$ and the fact that 
    $$ \sum_{l} \left( 2^{l \over 3}  \left\| \widetilde{u^n_{l}}\right\|_{L^2(\sigma)l^2}\right)^2
     \lesssim \| u^n \|_{X^{0, {1 \over 3}}_{\tau}}^2.$$
    
\end{proof}

We can then deduce from Lemma \ref{lemdiscStrich} that
\begin{cor}
For every $s \geq 0$, 
there exists $C>0$ such that  for every $u^{n}, \, v^n \in X^{s, {1 \over 3}}_{\tau}$, and $\tau \in (0, 1]$, we have
the estimate
\begin{equation}
\label{discStrich} \left \| \langle \partial_{x} \rangle^s (\Pi_{\tau} u^{n} \Pi_{\tau} v^n)\right\|_{l^2_{\tau}L^2}\leq C \|u^{n}\|_{X^{s, {1 \over 3}}_{\tau}}.
\end{equation}
\end{cor}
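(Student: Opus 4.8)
The plan is to deduce \eqref{discStrich} from the $L^{4}$ Strichartz estimate of Lemma~\ref{lemdiscStrich} by means of a Littlewood--Paley decomposition in the spatial frequency, exploiting that $\Pi_{\tau}$ localizes to $|k|\lesssim\tau^{-1/3}$, so that only $\mathcal{O}(\log(1/\tau))$ dyadic blocks are ever present. Two elementary observations will be used repeatedly: since $\langle\partial_{x}\rangle^{s}$ is a spatial Fourier multiplier it commutes with $\Pi_{\tau}$ and with the time weight defining $X^{s,b}_{\tau}$, so that $\|\langle\partial_{x}\rangle^{s}\Pi_{\tau}w^{n}\|_{X^{0,1/3}_{\tau}}=\|\Pi_{\tau}w^{n}\|_{X^{s,1/3}_{\tau}}\le\|w^{n}\|_{X^{s,1/3}_{\tau}}$; and for a sequence spatially localized at $\langle k\rangle\sim 2^{j}$ one has $\|\,\cdot\,\|_{X^{0,1/3}_{\tau}}\lesssim 2^{-sj}\|\,\cdot\,\|_{X^{s,1/3}_{\tau}}$, together with the square-function bound $\sum_{j}\|P_{j}\Pi_{\tau}w^{n}\|_{X^{s,1/3}_{\tau}}^{2}\lesssim\|\Pi_{\tau}w^{n}\|_{X^{s,1/3}_{\tau}}^{2}$, where $P_{j}$ denotes a Littlewood--Paley projection. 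The case $s=0$ is then immediate: by H\"older in $(n,x)$ and Lemma~\ref{lemdiscStrich}, $\|\Pi_{\tau}u^{n}\,\Pi_{\tau}v^{n}\|_{l^{2}_{\tau}L^{2}}\le\|\Pi_{\tau}u^{n}\|_{l^{4}_{\tau}L^{4}}\,\|\Pi_{\tau}v^{n}\|_{l^{4}_{\tau}L^{4}}\lesssim\|u^{n}\|_{X^{0,1/3}_{\tau}}\|v^{n}\|_{X^{0,1/3}_{\tau}}$.

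For $s>0$ I would write $\Pi_{\tau}u^{n}=\sum_{j}u^{n}_{j}$ and $\Pi_{\tau}v^{n}=\sum_{j'}v^{n}_{j'}$ with $u^{n}_{j}=P_{j}\Pi_{\tau}u^{n}$, $v^{n}_{j'}=P_{j'}\Pi_{\tau}v^{n}$, and split the product into a high--low part ($j\ge j'+3$), a low--high part ($j'\ge j+3$), and a diagonal part ($|j-j'|\le 2$). In the high--low part, grouping by the high index $j$, the factor $w^{n}_{j}:=\sum_{j'\le j-3}v^{n}_{j'}$ carries only frequencies $\ll 2^{j}$, so $\langle\partial_{x}\rangle^{s}(u^{n}_{j}\,w^{n}_{j})$ is spatially localized at $\langle k\rangle\sim 2^{j}$; hence applying $\langle\partial_{x}\rangle^{s}$ costs a factor $2^{sj}$, and H\"older together with Lemma~\ref{lemdiscStrich} (both $u^{n}_{j}$ and $w^{n}_{j}$ being $\Pi_{\tau}$-images) gives $\|\langle\partial_{x}\rangle^{s}(u^{n}_{j}\,w^{n}_{j})\|_{l^{2}_{\tau}L^{2}}\lesssim 2^{sj}\|u^{n}_{j}\|_{X^{0,1/3}_{\tau}}\|v^{n}\|_{X^{0,1/3}_{\tau}}\sim\|u^{n}_{j}\|_{X^{s,1/3}_{\tau}}\|v^{n}\|_{X^{0,1/3}_{\tau}}$. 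Since these summands live at distinct, hence essentially disjoint, dyadic frequency scales, the sum over $j$ is an $l^{2}$ operation and, by the square-function bound, the whole high--low part is $\lesssim\|u^{n}\|_{X^{s,1/3}_{\tau}}\|v^{n}\|_{X^{0,1/3}_{\tau}}$; the low--high part is symmetric. For the diagonal part, $\langle\partial_{x}\rangle^{s}(u^{n}_{j}\,v^{n}_{j'})$ with $|j-j'|\le 2$ has output frequency $\lesssim 2^{j}$, which gives the same bound $\lesssim\|u^{n}_{j}\|_{X^{s,1/3}_{\tau}}\|v^{n}_{j'}\|_{X^{0,1/3}_{\tau}}$; here there are only $\mathcal{O}(1)$ admissible $j'$ for each $j$, so Cauchy--Schwarz in $j$ again gives $\lesssim\|u^{n}\|_{X^{s,1/3}_{\tau}}\|v^{n}\|_{X^{0,1/3}_{\tau}}$. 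Adding the three contributions, $\|\langle\partial_{x}\rangle^{s}(\Pi_{\tau}u^{n}\Pi_{\tau}v^{n})\|_{l^{2}_{\tau}L^{2}}\lesssim\|u^{n}\|_{X^{s,1/3}_{\tau}}\|v^{n}\|_{X^{0,1/3}_{\tau}}+\|u^{n}\|_{X^{0,1/3}_{\tau}}\|v^{n}\|_{X^{s,1/3}_{\tau}}$, which contains \eqref{discStrich}.

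The only point requiring genuine care is that the number of dyadic blocks grows like $\log(1/\tau)$, so a naive $l^{1}$-in-$j$ summation would destroy the $\tau$-uniformity of the constant. This is precisely what is avoided by recording, in the high--low and low--high regimes, the near-orthogonality of the outputs at distinct scales $2^{j}$, so that the $j$-summation is an honest $l^{2}$ summation controlled by the Littlewood--Paley square function, and, in the diagonal regime, the bounded overlap $|j-j'|\le 2$. Everything else is routine manipulation with the properties of the discrete Bourgain spaces already established (Lemma~\ref{bourgainfaciled} and \eqref{embdisc1}) together with the discrete Strichartz estimate of Lemma~\ref{lemdiscStrich}.
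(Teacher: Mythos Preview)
Your argument is correct, but the paper takes a much shorter route. Instead of a Littlewood--Paley paraproduct decomposition, the paper works directly on the Fourier side: from $\langle k\rangle^{s}\lesssim\langle k'\rangle^{s}+\langle k-k'\rangle^{s}$ one gets
\[
\bigl\|\langle k\rangle^{s}\bigl(\widetilde{\Pi_{\tau}u^{n}}*\widetilde{\Pi_{\tau}v^{n}}\bigr)\bigr\|_{L^{2}l^{2}}
\lesssim
\bigl\|\bigl(\langle k\rangle^{s}|\widetilde{\Pi_{\tau}u^{n}}|\bigr)*|\widetilde{\Pi_{\tau}v^{n}}|\bigr\|_{L^{2}l^{2}}
+\text{(symmetric)},
\]
and then observes that the first term equals $\|a^{n}b^{n}\|_{l^{2}_{\tau}L^{2}}$ for the sequences $a^{n},b^{n}$ with \emph{nonnegative} space--time Fourier transforms $\widetilde{a^{n}}=\langle k\rangle^{s}|\widetilde{\Pi_{\tau}u^{n}}|$ and $\widetilde{b^{n}}=|\widetilde{\Pi_{\tau}v^{n}}|$. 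Since $a^{n},b^{n}$ are still $\Pi_{\tau}$-localized, H\"older and Lemma~\ref{lemdiscStrich} give $\|a^{n}b^{n}\|_{l^{2}_{\tau}L^{2}}\le\|\Pi_{\tau}a^{n}\|_{l^{4}_{\tau}L^{4}}\|\Pi_{\tau}b^{n}\|_{l^{4}_{\tau}L^{4}}\lesssim\|a^{n}\|_{X^{0,1/3}_{\tau}}\|b^{n}\|_{X^{0,1/3}_{\tau}}=\|u^{n}\|_{X^{s,1/3}_{\tau}}\|v^{n}\|_{X^{0,1/3}_{\tau}}$, the last equality because the $X^{s,b}_{\tau}$ norms depend only on $|\widetilde{\,\cdot\,}|$. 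This avoids entirely the orthogonality/Cauchy--Schwarz bookkeeping you carry out across dyadic scales, and in particular the uniformity in $\tau$ is automatic rather than something to be argued. Your paraproduct argument has the merit of being a robust template (it would survive, e.g., if the Strichartz input were not a pure $L^{4}$ norm), but for the present statement the paper's ``absolute-value Fourier'' trick is both shorter and conceptually simpler.
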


\begin{proof}
We observe that:
\begin{multline*} 
 \left \| \langle \partial_{x} \rangle^s (\Pi_{\tau} u^{n} \Pi_{\tau} v^n)\right\|_{l^2_{\tau}L^2}
 = \left \| \langle k  \rangle^s ( \widetilde{\Pi_{\tau} u^{n}}  *\widetilde{ \Pi_{\tau} v^n})(\sigma, k)\right\|_{L^2l^2} \\
 \lesssim   \left \|  \left( \langle k\rangle^s | \widetilde{\Pi_{\tau} u^{n}} | \right) * | \widetilde{ \Pi_{\tau} v^n}|\right\|_{L^2l^2} 
  +   \left \|  | \widetilde{\Pi_{\tau} u^{n}} |  *  \left( \langle k\rangle^s| \widetilde{ \Pi_{\tau} v^n} |\right)\right\|_{L^2l^2}.
 \end{multline*} 
To conclude, we observe that
$$   \left \|  \left( \langle k\rangle^s | \widetilde{\Pi_{\tau} u^{n}} | \right) * | \widetilde{ \Pi_{\tau} v^n}|\right\|_{L^2l^2} 
= \| a^n b^n \|_{l^2_{\tau}L^2},$$
where $a^n(x)$, $b^n(x)$ are such that
$$ \widetilde{a^n} (\sigma, k)=  \langle k\rangle^s | \widetilde{\Pi_{\tau} u^{n}} |, \quad
  \widetilde{b^n} (\sigma, k)= | \widetilde{\Pi_{\tau} v^{n}} |.$$
  By using Cauchy-Schwarz and Lemma \ref{lemdiscStrich}, we get that
  $$ \left \|  \left( \langle k\rangle^s | \widetilde{\Pi_{\tau} u^{n}} | \right) * | \widetilde{ \Pi_{\tau} v^n}|\right\|_{L^2l^2} 
   \leq \|\Pi_{\tau} a^n \|_{l^4_{\tau}L^4} \|\Pi_{\tau} n^n \|_{l^4_{\tau}L^4}
    \lesssim \|a^n \|_{X^{0, {1 \over 3}}_{\tau}} \|b^n \|_{X^{0, {1 \over 3}}_{\tau}}
     \lesssim  \|u^n \|_{X^{s, {1 \over 3}}_{\tau}} \|v^n \|_{X^{0, {1 \over 3}}_{\tau}}.$$
The symmetric term can be handled with a similar argument. This ends the proof.

\end{proof}

We are now in position to give the proof of Lemma \ref{bourgaindurd}.
\subsection*{Proof of Lemma \ref{bourgaindurd}}
We give the proof for $s=0$. The general case $s>0$ can be easily deduced with the same type of arguments
as above. We shall thus estimate
 $\| \partial_{x}\Pi_{\tau}( \Pi_{\tau}u^n  \Pi_{\tau}v^n) \|_{Y^0_{\tau}}$ for $u^n, \, v^n \in X^0_{\tau}.$ We recall that our definition of this space contain the fact
 that the functions have zero mean for every time.
 
 We start with estimating $ \|\partial_{x} \Pi_{\tau}( \Pi_{\tau}u^n \Pi_{\tau} v^n) \|_{X^{0, {- {1 \over 2}}}_{\tau}}.$
 We use a duality argument:
 \begin{multline*}  \|\partial_{x}\Pi_{\tau}(\Pi_{\tau} u^n \Pi_{\tau} v^n ) \|_{X^{0, {- {1 \over 2}}}_{\tau}} = \sup_{ \|w^n \|_{X^{0, {1 \over 2}}_{\tau}}\leq 1} \left|
  \tau \sum_{n} \int_{\mathbb{T}} \Pi_{\tau}u^n  \Pi_{\tau}v^n\partial_{x} \Pi_{\tau}w^n\, dx \right|
  \\= \sup_{ \|w^n \|_{X^{0, {1 \over 2}}_{\tau}}\leq 1} \left|
  \sum_{k, \, k'} \int_{\sigma, \, \sigma '}  k\, 
  \widetilde{\Pi_{\tau}u^n}(\sigma', k')  \widetilde{\Pi_{\tau}v^n}(\sigma- \sigma', k - k')  \widetilde{\Pi_{\tau}w^n}( - \sigma,  - k) \, 
  d\sigma' d \sigma
   \right|.
   \end{multline*}
   We then set
  \begin{align*}
  &  \widetilde{a^n}(\sigma, k)= \langle d_{\tau}(\sigma + k^3) \rangle^{1 \over 2} |  \widetilde{\Pi_{\tau}u^n}(\sigma, k)|, \\
    &    \widetilde{b^n}(\sigma, k)= \langle d_{\tau}(\sigma + k^3)\rangle^{1 \over 2} |  \widetilde{\Pi_{\tau}v^n}(\sigma, k)|, \\
  &    \widetilde{c^n}(\sigma, k)=   \langle d_{\tau}(\sigma + k^3)\rangle^{1 \over 2} |  \widetilde{\Pi_{\tau}w^n}(-\sigma, -k)|,
    \end{align*}
   and we shall estimate 
 $$ I = 
  \sum_{k, \, k'} \int_{\sigma\, \sigma'}   m_{\tau}(\sigma, \sigma', k, k')     \widetilde{a^n}(\sigma', k') \widetilde{b^n}(\sigma- \sigma', k-k') \widetilde{c^n} (\sigma, k)\, d \sigma d \sigma',
    $$
    where 
  $$ m_{\tau}(\sigma, \sigma', k, k')= { |k| 
    \over 
    \langle d_{\tau}(\sigma' + k'^3) \rangle^{1 \over 2} \langle d_{\tau}(\sigma- \sigma' + (k- k')^3)) \rangle^{1 \over 2} \langle d_{\tau}(\sigma+ k^3) \rangle^{1 \over 2}}.$$
Note that we have $\sigma, \, \sigma ' \in [- \pi/\tau, \pi/\tau]$ and by the choice of $\Pi_{\tau}$,  $|k|^3, \, |k'|^3 \leq \tau^{-1}.$

We first assume  that  
$ |\sigma + k^3|$ or $ | \sigma ' + k'^3|$  is bigger than $\epsilon/\tau$ for some $\epsilon>0$ independent
 of $\tau$ to be chosen. Let us  assume that it is the first
one (the other case being symmetric), then $ \langle d_{\tau} (\sigma + k^3) \rangle^{ 1 \over 2} \gtrsim \tau^{- { 1 \over 2}}$
and therefore,
\begin{multline}
\label{tresbon}  m_{\tau}(\sigma, \sigma', k, k') \lesssim { \tau^{ 1 \over 6}
\over 
    \langle d_{\tau}(\sigma' + k'^3) \rangle^{1 \over 2} \langle d_{\tau}(\sigma- \sigma' + (k- k')^3)) \rangle^{1 \over 2}}
\\ \lesssim   { 1
\over 
    \langle d_{\tau}(\sigma' + k'^3) \rangle^{1 \over 2} \langle d_{\tau}(\sigma- \sigma' + (k- k')^3)) \rangle^{1 \over 2}}.
    \end{multline}
When both $ |\sigma + k^3|$ and $| \sigma ' + k'^3|$ are smaller than $\epsilon/\tau$, since
$$ \sigma- \sigma' + (k- k')^3 = \sigma + k^3 - (\sigma' + k^3) -3kk'(k-k'), $$
we have that
$$  \left |\sigma- \sigma' + (k- k')^3 \right| \leq 2 {\epsilon \over \tau} + {6 \over \tau} < { 2 \pi \over \tau}$$
by choosing $\epsilon$ sufficiently small. Therefore in this situation we have that
$$ |d_{\tau} (\sigma + k^3) \gtrsim  | \sigma + k^3 |, \,  |d_{\tau} (\sigma' + k'^3) \gtrsim   | \sigma' + k^3 |, 
\, | d_{\tau}(\sigma- \sigma' + (k- k')^3)) | \gtrsim | \sigma- \sigma' + (k- k')^3|.$$
We are thus in a situation very close to the continuous case, we have
$$  m_{\tau}(\sigma, \sigma', k, k') \lesssim
{ |k| 
    \over 
    \langle\sigma' + k'^3  \rangle^{1 \over 2} \langle \sigma- \sigma' + (k- k')^3 \rangle^{1 \over 2} \langle \sigma+ k^3 \rangle^{1 \over 2}}$$
 and since
 $$  \sigma' + k'^3 - (\sigma + k^3) + \sigma- \sigma' + (k- k')^3 = - 3 kk'(k-k')$$
 we deduce that
  \begin{equation}
  \label{intercont}\max( |\sigma' + k'^3 |,  |\sigma + k^3|, |\sigma- \sigma' + (k- k')^3 |)   \geq 3 |k|\, |k'|\, |k-k'|.
  \end{equation}
   Let us  assume that the largest one above  is $ |\sigma+ k^3|$, the other cases being similar. Then we get
   \begin{multline*}
    m_{\tau}(\sigma, \sigma', k, k') \lesssim { |k|^{1 \over 2} \over  \langle k'\rangle^{ 1 \over 2} \langle k-k' \rangle^{1 \over 2}
    \langle d_{\tau}(\sigma' + k'^3) \rangle^{1 \over 2} \langle d_{\tau}(\sigma- \sigma' + (k- k')^3) \rangle^{1 \over 2} } 
    \\ \lesssim { 1 \over 
    \langle d_{\tau}(\sigma' + k'^3) \rangle^{1 \over 2} \langle d_{\tau}(\sigma- \sigma' + (k- k')^3) \rangle^{1 \over 2}} 
    \end{multline*}
    by using that $|k| \leq |k'| + |k-k'|$ to get the last line.
     The above estimate is similar to \eqref{tresbon}.
      We can thus estimate $I$ by $II$+ symmetric terms where
    $$ II =  
  \sum_{k, \, k'} \int_{\sigma, \, \sigma'}      \widetilde{\alpha^n}(\sigma', k') \widetilde{\beta^n}(\sigma- \sigma', k-k') \widetilde{c^n} (\sigma, k)\, d \sigma d \sigma',$$
  where we have set
 $$  \widetilde{\alpha^n}(\sigma, k)  =
  |\widetilde{\Pi_{\tau}u^n}(\sigma, k)|, \quad
        \widetilde{\beta^n}(\sigma, k)=  |  \widetilde{\Pi_{\tau}v^n}(\sigma, k)|.
 $$
 Going back to the physical space, we get that
 $$ II = \tau \sum_{n} \int_{\mathbb{T}} \alpha^n\beta^n  c^n \, dx$$
 and hence from the H\"older inequality, we find
 $$ II \leq  \|\alpha^n \|_{l^4_{\tau}L^4} \|\beta^n \|_{l^4_{\tau}L^4} \| c^n \|_{l^2_{\tau}L^2}.$$
 This yields thanks to Lemma \ref{lemdiscStrich}, 
 $$ II \lesssim \| \alpha ^n \|_{X^{0, {1 \over 3}}_{\tau}} \| \beta ^n \|_{X^{0, {1 \over 3}}_{\tau}}
  \| c ^n \|_{l^2_{\tau}L^2} \lesssim  \| u^n \|_{X^{0, {1 \over 3}}_{\tau}} \| v^n \|_{X^{0, {1 \over 3}}_{\tau}} \|w^n\|_{X^{0, {1 \over 2}}_{\tau}}.$$
  We thus finally get that
  $$ I \leq  (\| u^n \|_{X^{0, {1 \over 3}}_{\tau}} \| v^n \|_{X^{0, {1 \over 2}}_{\tau}}  + \| u^n \|_{X^{0, {1 \over 2}}_{\tau}} \| v^n \|_{X^{0, {1 \over 2}}_{\tau}})\|w^n\|_{X^{0, {1 \over 2}}_{\tau}}$$
  from which we deduce the estimate of $ \|\partial_{x}\Pi_{\tau}(\Pi_{\tau} u^n \Pi_{\tau} v^n ) \|_{X^{0, {- {1 \over 2}}}_{\tau}}$.
  
  It remains to estimate  $\left\|{ 1 \over d_{\tau}(\sigma + k^3)} \mathcal{F}_{n,x \rightarrow \sigma, k}\left(\partial_{x}\Pi_{\tau}(\Pi_{\tau} u^n \Pi_{\tau} v^n ) \right) \right\|_{l^2(k)L^1(\sigma)}.$
   We use again a duality argument, we take $(w^n)_{n}$ such
   that $\| \widetilde{w^n} \|_{l^2(k) L^\infty(\sigma)} \leq 1$ and
   hence, we have to estimate with $a^n$ and $b^n$ as above
   $$ III= \sum_{k, \, k'} \int_{\sigma\, \sigma'}   m^1_{\tau}(\sigma, \sigma', k, k')     \widetilde{a^n}(\sigma', k') \widetilde{b^n}(\sigma- \sigma', k-k')  |\widetilde{\Pi_{\tau}w^n}| (\sigma, k)\, d \sigma d \sigma'$$
 with
 $$  m^1_{\tau}(\sigma, \sigma', k, k') =   
   { |k|   \over 
    \langle d_{\tau}(\sigma' + k'^3) \rangle^{1 \over 2} \langle d_{\tau}(\sigma- \sigma' + (k- k')^3)) \rangle^{1 \over 2} \langle d_{\tau}(\sigma+ k^3) \rangle}.$$
    Again, if 
$ |\sigma' + k'^3|\geq \epsilon/\tau$, 
we have since $|k| \leq \tau^{- { 1 \over 3}}$ that
\begin{equation}
\label{kpetit}  m^1_{\tau}(\sigma, \sigma', k, k')  \lesssim   
   { 1 \over 
    \langle d_{\tau}(\sigma- \sigma' + (k- k')^3)) \rangle^{1 \over 2} \langle d_{\tau}(\sigma+ k^3) \rangle}.
    \end{equation}
  Going back to the physical space  and using the H\"older inequality, we estimate this part of  $III$
     by 
  $$ \|a^n \|_{l^2_{\tau}L^2} \|\beta^n \|_{l^4_{\tau}L^4}  \left \| \mathcal{F}_{\sigma,k\rightarrow n, x}\left({ |\widetilde{\Pi_{\tau}w^n}|
  \over  \langle d_{\tau}(\sigma+ k^3) \rangle }\right) \right\|_{l^4_{\tau}L^4} $$
    which is bounded thanks to Lemma \ref{lemdiscStrich} by
  \begin{multline*}  \|a^n \|_{l^2_{\tau}L^2}  \|\beta^n \|_{X^{0, {1 \over 3}}_{\tau}}  \left\|  {\widetilde{w^n}
  \over  \langle d_{\tau}(\sigma+ k^3) \rangle^{2 \over 3} }\right\|_{l^2(k)L^2(\sigma)}
  \\ \lesssim \|u^n\|_{X^{0, {1 \over 2}}_{\tau}}\|v^n\|_{X^{0, {1 \over 3}}_{\tau}} 
 \left\| {\widetilde{w^n}
  \over  \langle d_{\tau}(\sigma+ k^3) \rangle^{2 \over 3}} \right\|_{l^2(k)L^2(\sigma)}
   \lesssim\|u^n\|_{X^{0, {1 \over 2}}_{\tau}}\|v^n\|_{X^{0, {1 \over 3}}_{\tau}} 
 \left\|\widetilde{w^n}\right\|_{l^2(k)l^\infty(\sigma)} 
 \end{multline*}
 since $4/3>1$ which is the desired estimate.
 
  If   $ | \sigma  + k^3|\geq \epsilon/\tau$, we have
  $$m^1_{\tau}(\sigma, \sigma', k, k')  \lesssim_{\epsilon}   
   { \tau^{- { 1 \over 3}}  \over 
  \langle d_{\tau}(\sigma' + k'^3) \rangle^{1 \over 2}    \langle d_{\tau}(\sigma- \sigma' + (k- k')^3) \rangle^{1 \over 2} \langle |d_{\tau}(\sigma+ k^3)| + { 1 \over \tau } \rangle } .$$ 
  From the same arguments as above  using Lemma \ref{lemdiscStrich}, we then get that this contribution in $III$
   can be estimated by 
   $$ \tau^{- {1 \over 3} } \|u^n\|_{X^{0, {1 \over 3}}_{\tau}}\|v^n\|_{X^{0, {1 \over 3}}_{\tau}} 
 \left\| {\widetilde{w^n}
  \over  \langle |d_{\tau}(\sigma+ k^3)| + { 1 \over \tau} \rangle }\right\|_{l^2(k)L^2(\sigma)}
   \lesssim  \|u^n\|_{X^{0, {1 \over 3}}_{\tau}}\|v^n\|_{X^{0, {1 \over 3}}_{\tau}} 
 \left\| \widetilde{w^n} \right\|_{l^2(k) L^\infty(\sigma)},$$
 where for the last estimate, we have used that
 $$ \int_{- {\pi\over \tau}}^{\pi \over \tau} { 1 \over  \langle |d_{\tau}(\sigma+ k^3)| + { 1 \over \tau} \rangle^{2}} 
  \lesssim  \int_{- {\pi\over \tau}}^{\pi \over \tau} { 1 \over  1+  |\sigma|^{2} +   { 1 \over \tau^{2}  } }\, d\sigma
   \lesssim \tau.$$
   It remains to handle the case  $ | \sigma  + k^3|\leq  \epsilon/\tau$, $ |\sigma' + k'^3|\leq \epsilon/\tau$.
    By choosing $\epsilon$ sufficiently small as before, we have in this case that
    $$ m^1_{\tau}(\sigma, \sigma', k, k')  \lesssim { |k|   \over 
    \langle \sigma' + k'^3 \rangle^{1 \over 2} \langle \sigma- \sigma' + (k- k')^3 \rangle^{1 \over 2} \langle \sigma+ k^3 \rangle}.$$
    We shall thus use again the property \eqref{intercont}.
     We shall consider the two cases:
     \begin{itemize}
     \item if $\max( |\sigma' + k'^3 |,  |\sigma + k^3|, |\sigma- \sigma' + (k- k')^3 |) =  | \sigma' + k'^3 |$
      (the case that  the max is $|\sigma- \sigma' + (k- k')^3 |$ is  symmetric). Then we get from \eqref{intercont}   that
      $$  m^1_{\tau}(\sigma, \sigma', k, k')  \lesssim { 1 \over 
    \langle d_{\tau}(\sigma- \sigma' + (k- k')^3)
     \rangle^{1 \over 2} \langle d_{\tau}(\sigma+ k^3) \rangle} $$
     which is similar to \eqref{kpetit}. We can thus estimate this contribution to $III$ in the same way as previously.
     \item if $ \max( |\sigma' + k'^3 |,  |\sigma + k^3|, |\sigma- \sigma' + (k- k')^3 |) =  |\sigma+ k^3 |$.
     We observe that \eqref{intercont} gives
     $$  |  \sigma+ k^3 | \gtrsim |k| |k'| |k-k'| \gtrsim  |k|^2$$
     and we therefore get that
     $$  m^1_{\tau}(\sigma, \sigma', k, k')  \lesssim { |k| \over  \langle d_{\tau}(\sigma' +  k'^3) \rangle^{1 \over 2} 
    \langle d_{\tau}(\sigma- \sigma' + (k- k')^3) \rangle^{1 \over 2} \langle |d_{\tau}(\sigma+ k^3)| + |k|^2 \rangle}.$$
    We thus get by using again Lemma \ref{lemdiscStrich},  get that this contribution in $III$
   can be estimated by 
   $$  \|u^n\|_{X^{0, {1 \over 3}}_{\tau} }\|v^n\|_{X^{0, {1 \over 3} }_{\tau} } 
 \left\| { |k| \,\widetilde{w^n} \over  \langle |d_{\tau}(\sigma+ k^3)| + |k|^2\rangle }\right\|_{l^2(k)L^2(\sigma)}.$$
  To conclude, we use that 
  $$   \left\| { |k| \,\widetilde{w^n}   \over  \langle |d_{\tau}(\sigma+ k^3)| + |k|^2 \rangle }  \right\|_{l^2(k)L^2(\sigma) }
  \lesssim  \left\| \widetilde{w^n} \right\|_{l^2(k) L^\infty(\sigma)}$$
  since 
 $$ \int_{- {\pi\over \tau}}^{\pi \over \tau} { 1 \over  \langle |d_{\tau}(\sigma+ k^3)| + |k|^2 \rangle^{2} } d\sigma
  \lesssim  \int_{- {\pi\over \tau}}^{\pi \over \tau} { 1 \over  1+  |\sigma|^2 +   |k|^4 }\, d\sigma
   \lesssim  { 1 \over |k|^2}.$$
     
     \end{itemize}
   Gathering all  the above estimates, we arrive at   
 $$  \left\|{ 1 \over d_{\tau}(\sigma + k^3)} \mathcal{F}_{n,x \rightarrow \sigma, k}\left(\partial_{x}\Pi_{\tau}(\Pi_{\tau} u^n \Pi_{\tau} v^n ) \right) \right\|_{l^2(k)L^1(\sigma)}
  \lesssim  \|u^n\|_{X^{0, {1 \over 2}}_{\tau} }\|v^n\|_{X^{0, {1 \over 3} }_{\tau} } +  \|u^n\|_{X^{0, {1 \over 3}}_{\tau} }\|v^n\|_{X^{0, {1 \over 2} }_{\tau} } .$$
  This ends the proof of Lemma \ref{bourgaindurd}.

\subsection*{Acknowledgements}

{\small
KS has received funding from the European Research Council (ERC) under the European Union’s Horizon 2020 research and innovation programme (grant agreement No. 850941).
}

\end{document}